\newtheorem{theorem}{Theorem}
\newtheorem{proposition}{Proposition}
\newtheorem{lemma}{Lemma}
\newtheorem{definition}{Definition}
\newtheorem{remark}{Remark}
\newtheorem{corollary}{Corollary}
\theoremstyle{definition}
\newtheorem{example}{Example}
\newcommand{\ehat}{\bm{e}}
\newcommand{\bhat}{\bm{b}}
\title{Normal stability of slow manifolds in nearly-periodic Hamiltonian systems}
\author[1]{J. W. Burby}
\author[2]{E. Hirvijoki}
\affil[1]{Los Alamos National Laboratory, Los Alamos, NM 87545, USA}
\affil[2]{Department of Applied Physics, Aalto University, P. O. Box 11100, 00076 AALTO, Finland  }
\date{March 2021}
\begin{document}

\maketitle

\begin{abstract}
M. Kruskal showed that each nearly-periodic dynamical system admits a formal $U(1)$ symmetry, generated by the so-called roto-rate. We prove that such systems also admit nearly-invariant manifolds of each order, near which rapid oscillations are suppressed. We study the nonlinear normal stability of these slow manifolds for nearly-periodic Hamiltonian systems on \emph{barely symplectic manifolds} -- manifolds equipped with closed, non-degenerate $2$-forms that may be degenerate to leading order. In particular, we establish a sufficient condition for long-term normal stability based on second derivatives of the well-known adiabatic invariant. We use these results to investigate the problem of embedding guiding center dynamics of a magnetized charged particle as a slow manifold in a nearly-periodic system. We prove that one previous embedding, and two new embeddings enjoy long-term normal stability, and thereby strengthen the theoretical justification for these models.
\end{abstract}

\tableofcontents

\section{Introduction}


Dynamical systems with multiple timescales often exhibit special slow trajectories that lie along almost invariant sets known as slow manifolds. \cite{Gorban_2004,MacKay_2004,Burby_Klotz_2020} Because slow manifolds generally fail to be true invariant objects, analysis of their dynamical significance requires special care. The most well-understood case is normal hyperbolicity: the slow manifold attracts or repels nearby trajectories. As shown by Fenichel \cite{Fenichel_1979}, given a normally-hyperbolic slow manifold, nearby there must be a true normally-hyperbolic invariant manifold. Less well-undestood are the normally-elliptic slow manifolds: slow manifolds around which nearby trajectories oscillate. These objects may fail to approximate true invariant manifolds, but they frequently form the basis for model reduction in dynamical systems with weak dissipation. For instance, the equations governing quasigeostrophic flow describe motion on an elliptic slow manifold inside of the rotating shallow water model \cite{Lorenz_1986,Vautard_1986,Vanneste_2004}; normal oscillations correspond to fast gravity waves. Other examples include the incompressible Euler equations \cite{Klainerman_1982}, which arise as a slow manifold inside of the compressible Euler equations, and ideal magnetohydrodynamics, which may be understood as a slow manifold for a pair of charged ideal fluids \cite{Burby_two_fluid_2017}. 


Reduction to an elliptic slow manifold is fraught with theoretical challenges. Principal among these is the question of normal stability: Do trajectories that begin near an elliptic slow manifold exhibit secular normal drifts? While ellipticity implies marginal linear stability on short timescales, normal instability may still arise at later times due to resonance phenomenona. Thus a dynamical model obtained by reduction to an elliptic slow manifold may spontaneously break down. Even worse, such breakdown may be undetectable from within the reduced model itself.

An important example of an elliptic slow manifold for which normal stability remains an open problem was constructed recently by Xiao and Qin \cite{Xiao_2020}, who proposed a novel method for symplectic integration of the so-called guiding center equations for charged particles in a strong magnetic field. As shown by Littlejohn in \cite{Littlejohn_1981,Littlejohn_1982,Littlejohn_1983,Littlejohn_1984}, the guiding center equations comprise a Hamiltonian system with a non-canonical symplectic structure. Moreover, the natural Lagrangian for guiding centers is degenerate; its velocity Hessian is singular. This makes the formulation of symplectic integrators for guiding center dynamics extremely challenging because standard symplectic integration theory is intended for either canonical symplectic structures, or non-degenerate Lagrangians. However, Xiao and Qin suggested a method for circumventing this difficulty: embed the guiding center system as an elliptic slow manifold in a larger system that does admit a regular Lagrangian. Applying conventional symplectic integration methods to this larger system then leads to a higher-dimension structure-preserving scheme with a slow manifold that formally recovers the guiding center dynamics of interest. (Their scheme is therefore a slow manifold integrator, as described in \cite{Burby_Klotz_2020}.) This idea provides an elegant solution to symplectic integration of guiding center dynamics, provided the integrator's slow manifold is normally-stable. On the other hand, if a normal instability does exist, then the scheme will fail after the instability onset time. At present, normal stability remains an open question.

MacKay suggests in \cite{MacKay_2004} that a useful method for establishing elliptic normal stability in general is identifying an adiabatic invariant whose set of critical points gives the slow manifold. Then sign-definiteness of the normal Hessian should imply normal stability by a Lyapunov-type argument. We say that the slow manifold satisfies a \emph{free-action principle}. However, it is unclear in general how to identify such an adiabatic invariant, or even if such an adiabatic invariant exists.

In this Article, we will identify an important class of elliptic slow manifolds for which the free-action principle always applies. In particular we will show that each nearly-periodic system with an appropriate Hamiltonian structure admits elliptic slow manifolds of arbitrary order, and that these slow manifolds coincide with critical sets for adiabatic invariants. Moreover, we will prove rigorously that sign-definiteness of the normal Hessian implies long-term normal stability. After establishing this theoretical result, we will apply it in the study of guiding center dynamics of individual charged particles in a strong magnetic field. 


Using our general theory, we will show that Xiao and Qin's slow manifold embedding of guiding center dynamics enjoys normal stability in continuous time. This result leaves normal stability in discrete time an open question, but motivates further study in that direction. We will also construct a pair of alternative finite-dimensional slow manifold embeddings of guiding center dynamics. One is a covariant relativistic generalization of the Xiao-Qin embedding. The other is a special case of a more general embedding that applies to \emph{any} symplectic Hamiltonian system. Like Xiao and Qin's case, the larger systems into which we embed come equipped with a regular Lagrangian structure. We use our general theory to prove long-term normal stability for these new embeddings, and thereby identify promising future extensions of Xiao and Qin's idea.

In order to ensure that our abstract theory is general enough to handle the guiding center system, we were forced to consider Hamiltonian systems on symplectic manifolds whose symplectic forms may be very nearly degenerate. We formalize this near-degeneracy by supposing the symplectic form $\Omega_\epsilon$ is a smooth function of the parameter $\epsilon$ that quantifies the timescale separation in a nearly-periodic system, and that $\Omega_0$ may be degenerate. We call manifolds equipped with symplectic forms of this type \emph{barely-symplectic}. By working at this level of generality, our abstract results exhibit an interesting competition between stabilizing and destabilizing influences on the slow manifolds that we construct. Namely, stronger degeneracy of $\Omega_\epsilon$ as $\epsilon\rightarrow 0$ appears to destabilize the slow manifolds, while vanishing of early terms in the adiabatic invariant series (see \cite{Burby_Squire_2020} for explicit formulas for the first few terms) has a stabilizing effect. From this perspective, the guiding center  embeddings we study are remarkable because the stabilizing and destabilizing influences balance, leading to normal stability results that would be expected for $\epsilon$-independent symplectic manifolds.

\subsection{Notational conventions}
In this article, smooth shall always mean $C^\infty$. We reserve the symbol $M$ for a smooth manifold equipped with a smooth auxilliary Riemannian metric $g$. We say $f_\epsilon:M_1\rightarrow M_2$, $\epsilon\in\mathbb{R}$, is a smooth $\epsilon$-dependent mapping between manifolds $M_1,M_2$ when the mapping $M_1\times\mathbb{R}\rightarrow M_2:(m,\epsilon)\mapsto f_\epsilon(m)$ is smooth. Similarly, $\bm{T}_\epsilon$ is a smooth $\epsilon$-dependent tensor field on $M$ when (a) $\bm{T}_\epsilon(m)$ is an element of the tensor algebra $\mathcal{T}_m(M)$ at $m$ for each $m\in M$  and $\epsilon\in\mathbb{R}$, and (b) $\bm{T}_\epsilon$ is a smooth $\epsilon$-dependent mapping between the manifolds $M$ and $\mathcal{T}(M)=\cup_{m\in M}\mathcal{T}_m(M)$. 

The symbol $X_\epsilon$ will always denote a smooth $\epsilon$-dependent vector field on $M$. If $\bm{T}_\epsilon$ is a smooth $\epsilon$-dependent section of either $TM\otimes TM$ or $T^*M\otimes T^*M$ then $\widehat{\bm{T}}_\epsilon$ is the corresponding smooth $\epsilon$-dependent bundle map $T^*M\rightarrow TM:\alpha\mapsto \iota_\alpha\bm{T}_\epsilon$, or $TM\rightarrow T^*M:X\mapsto \iota_X\bm{T}_\epsilon$, respectively. Note that if $\Omega$ is a symplectic form on $M$ with associated Poisson bivector $\mathcal{J}$ then $\widehat{\Omega}^{-1} = -\widehat{\mathcal{J}}$.

\section{Kruskal's theory of nearly-periodic systems\label{nps_background}}

In 1962, Kruskal presented an asymptotic theory \cite{Kruskal_1962} of averaging for dynamical systems whose trajectories are all periodic to leading order. Nowadays, Kruskal's method is termed one-phase averaging \cite{Lochak_1993}, which suggests a contrast with the multi-phase averaging methods underlying, e.g. Kolmogorov-Arnol'd-Moser (KAM) theory. Since this theory provides the framework for the results in this Article, we review its main ingredients here.

\begin{definition}
A \textbf{nearly-periodic system} on a manifold $M$ is a smooth $\epsilon$-dependent vector field $X_\epsilon$ on $M$ such that $X_0 = \omega_0\,R_0$, where
\begin{itemize}
    \item $\omega_0:M\rightarrow\mathbb{R}$ is strictly positive
    \item $R_0$ is the infinitesimal generator for a circle action $\Phi^0_\theta:M\rightarrow M$, $\theta\in U(1)$.
    \item $\mathcal{L}_{R_0}\omega_0 = 0$.
\end{itemize}
The vector field $R_0$ is called the \textbf{limiting roto-rate}, and the set $S_0 = \{s\in M\mid R_0(s) = 0\}$ is called the \textbf{limiting slow manifold}.
\end{definition}
\begin{remark}
In addition to requiring $\omega_0$ is sign-definite, Kruskal assumed that $R_0$ is nowhere vanishing. However, this assumption is not essential for one-phase averaging to work. In fact, the limiting slow manifold $S_0$ will play an crucial role in the rest of this Article. Note that, by Lemma \ref{S0_is_submanifold}, $S_0\subset M$ is indeed a submanifold.
\end{remark}

Kruskal's theory applies to both Hamiltonian and non-Hamiltonian systems. In the Hamiltonian setting, it leads to stronger conclusions. A general class of Hamiltonian systems for which the theory works nicely may be defined as follows.

\begin{definition}
Let $(M,\Omega_\epsilon)$ be a manifold equipped with a smooth $\epsilon$-dependent presymplectic form $\Omega_\epsilon$. Assume there is a smooth $\epsilon$-dependent $1$-form $\vartheta_\epsilon$ such that $\Omega_\epsilon = - \mathbf{d}\vartheta_\epsilon$. A \textbf{nearly-periodic Hamiltonian system} on $(M,\Omega_\epsilon)$ is a nearly-periodic system $X_\epsilon$ on $M$ such that $\iota_{{X}_\epsilon}\Omega_\epsilon = \mathbf{d}H_\epsilon$, for some smooth $\epsilon$-dependent function $H_\epsilon:M\rightarrow\mathbb{R}$. 
\end{definition}

Kruskal showed that all nearly-periodic systems admit an approximate $U(1)$-symmetry that is determined to leading order by the unperturbed periodic dynamics. He named the generator of this approximate symmetry the \emph{roto-rate}. In the Hamiltonian setting, he showed that both the dynamics and the Hamiltonian structure are $U(1)$-invariant.

\begin{definition}
A \textbf{roto-rate} for a nearly-periodic system $X_\epsilon$ on a manifold $M$ is a formal power series $R_\epsilon = R_0 + \epsilon\,R_1 + \epsilon^2\,R_2 + \dots$ with vector field coefficients such that
\begin{itemize}
    \item $R_0$ is equal to the limiting roto-rate
    \item $\exp(2\pi \mathcal{L}_{R_\epsilon}) = 1$
    \item $[X_\epsilon,R_\epsilon] = 0$,
\end{itemize}
where the second and third conditions are understood in the sense of formal power series.
\end{definition}

\begin{proposition}[Kruskal \cite{Kruskal_1962}]\label{existence_of_roto_rate}
Every nearly-periodic system admits a unique roto-rate $R_\epsilon$. The roto-rate for a nearly-periodic Hamiltonian system on an exact presymplectic manifold $(M,\Omega_\epsilon)$ satisfies $L_{R_\epsilon}\Omega_\epsilon = 0$ in the sense of formal power series. 
\end{proposition}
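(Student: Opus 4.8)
The plan is to construct the roto-rate order by order as a formal power series, determining each coefficient $R_n$ uniquely from the lower-order data, and then to verify the additional Hamiltonian-structure statement by an averaging argument. First I would set up the key tool: averaging over the leading-order circle action $\Phi^0_\theta$. For any tensor field $\bm{T}$, define its $\Phi^0$-average $\langle\bm{T}\rangle = \frac{1}{2\pi}\int_0^{2\pi}(\Phi^0_\theta)^*\bm{T}\,d\theta$, which satisfies $\mathcal{L}_{R_0}\langle\bm{T}\rangle = 0$ and, crucially, the "homological" identity: if $\langle\bm{T}\rangle = 0$ then there is a tensor field $\bm{S}$ with $\mathcal{L}_{R_0}\bm{S} = \bm{T}$, namely $\bm{S} = \frac{1}{2\pi}\int_0^{2\pi}\theta\,(\Phi^0_\theta)^*\bm{T}\,d\theta$ up to a sign/convention; this is where positivity of $\omega_0$ and the fact that $R_0$ generates a genuine $U(1)$-action (period $2\pi$) enter. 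The operator $\mathcal{L}_{R_0}$ thus has kernel equal to the $\Phi^0$-invariants and is invertible on the complementary subspace of zero-average tensors.

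Next I would expand the three defining conditions in powers of $\epsilon$. Write $X_\epsilon = \sum \epsilon^n X_n$ (with $X_0 = \omega_0 R_0$) and $R_\epsilon = \sum \epsilon^n R_n$. The condition $\exp(2\pi\mathcal{L}_{R_\epsilon}) = 1$ expanded to order $\epsilon^n$ yields, after using $\exp(2\pi\mathcal{L}_{R_0}) = 1$, an equation of the form $\mathcal{L}_{R_0}(\text{something linear in } R_n) = (\text{known lower-order terms})$ whose solvability requires the right side to have zero $\Phi^0$-average; I would check this compatibility condition holds automatically, and then invert $\mathcal{L}_{R_0}$ to pin down the $\Phi^0$-non-invariant part of $R_n$. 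The remaining $\Phi^0$-invariant part of $R_n$ is then fixed by the commutator condition $[X_\epsilon, R_\epsilon] = 0$: at order $\epsilon^n$ this reads $[\omega_0 R_0, R_n] + [X_n, R_0] + (\text{lower order}) = 0$, i.e. $-\mathcal{L}_{R_0}(\omega_0\,\cdot)$-type operator applied to $R_n$ equals known terms; combining with the previous step and using that the two conditions constrain complementary pieces of $R_n$, one gets existence and uniqueness at order $n$. Induction then produces the full series, and uniqueness at each stage gives uniqueness of $R_\epsilon$.

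For the Hamiltonian statement, I would argue that $\mathcal{L}_{R_\epsilon}\Omega_\epsilon = 0$ as formal power series. Since $\iota_{X_\epsilon}\Omega_\epsilon = \mathbf{d}H_\epsilon$ and $[X_\epsilon, R_\epsilon] = 0$, one computes $\mathcal{L}_{X_\epsilon}(\mathcal{L}_{R_\epsilon}\Omega_\epsilon) = \mathcal{L}_{R_\epsilon}(\mathcal{L}_{X_\epsilon}\Omega_\epsilon)$; and $\mathcal{L}_{X_\epsilon}\Omega_\epsilon = \mathbf{d}\iota_{X_\epsilon}\Omega_\epsilon + \iota_{X_\epsilon}\mathbf{d}\Omega_\epsilon = \mathbf{d}\mathbf{d}H_\epsilon = 0$ (using $\Omega_\epsilon = -\mathbf{d}\vartheta_\epsilon$ closed). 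So $\beta_\epsilon := \mathcal{L}_{R_\epsilon}\Omega_\epsilon$ is a formal series of closed $2$-forms that is invariant under the flow of $X_\epsilon$. At leading order, $\beta_0 = \mathcal{L}_{R_0}\Omega_0$; I would show this vanishes using that $\Omega_0$ is the $\epsilon\to 0$ limit of a form for which the dynamics $X_0 = \omega_0 R_0$ is Hamiltonian, together with $\mathcal{L}_{R_0}\omega_0 = 0$ and an averaging identity ($\langle\mathcal{L}_{R_0}\Omega_0\rangle = 0$ always, and the $X_0$-invariance forces $\beta_0$ to be $\Phi^0$-invariant, hence zero). Then proceed inductively: assuming $\beta_0 = \dots = \beta_{n-1} = 0$, the order-$\epsilon^n$ part of the $X_\epsilon$-invariance of $\beta_\epsilon$ gives $\mathcal{L}_{X_0}\beta_n = \omega_0\mathcal{L}_{R_0}\beta_n = (\text{lower order}) = 0$, so $\beta_n$ is $\Phi^0$-invariant; combined with $\langle\beta_n\rangle$ vanishing (which follows from averaging $\mathcal{L}_{R_\epsilon}\Omega_\epsilon$ and the symmetry properties of $R_\epsilon$), we get $\beta_n = 0$.

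The main obstacle I anticipate is verifying the compatibility (zero-average) conditions at each inductive step — both for the construction of $R_n$ and for the vanishing of $\beta_n$ — cleanly, without drowning in the combinatorics of the Baker--Campbell--Hausdorff-type expansion of $\exp(2\pi\mathcal{L}_{R_\epsilon})$. The conceptual content is that the period-$2\pi$ normalization is preserved along the way, so the dangerous secular terms that would obstruct solvability of the homological equation always average to zero; organizing the bookkeeping so this is transparent, rather than a miracle rechecked at every order, is the delicate part.
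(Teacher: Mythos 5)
This proposition is not proved in the paper at all: it is Kruskal's theorem, cited from \cite{Kruskal_1962}, and the paper only records the two ingredients Kruskal's argument rests on, namely the normalizing transformation (Proposition \ref{existence_of_normalizing_transformation}) and the bootstrap result (Proposition \ref{bootstrap_prop}). Your direct order-by-order attempt is in the right family, but it contains a concrete error in the construction of $R_\epsilon$: you have the roles of the two defining conditions reversed. Expanding $\exp(2\pi\mathcal{L}_{R_\epsilon})=1$ at order $\epsilon^n$, the new coefficient $R_n$ enters only through $\mathcal{L}_{\langle R_n\rangle}$, where $\langle\cdot\rangle$ is the $\Phi^0_\theta$-average (at first order the condition is literally $\langle R_1\rangle=0$), so the periodicity condition constrains only the $\Phi^0$-invariant part of $R_n$ and cannot be ``inverted'' to produce the fluctuating part, as you propose. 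It is the commutation condition $[X_\epsilon,R_\epsilon]=0$ that determines the complementary piece, and there the homological operator is $\mathcal{L}_{X_0}=\mathcal{L}_{\omega_0 R_0}$, which on vector fields is \emph{not} $\omega_0\mathcal{L}_{R_0}$ (one has $[\omega_0 R_0,Y]=\omega_0[R_0,Y]-(\mathcal{L}_Y\omega_0)R_0$), and whose kernel is not the space of $\Phi^0$-invariant fields; so the clean ``average versus fluctuation are complementary'' bookkeeping your induction relies on does not hold as stated. This is essentially why Kruskal routes the existence proof through normalized coordinates rather than a naive splitting.

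In the Hamiltonian half there is a second gap, and it sits exactly where the paper flags subtlety. Your inductive step uses the implication ``$\mathcal{L}_{X_0}\beta_n=0$, hence $\beta_n$ is $\Phi^0$-invariant,'' which is unjustified: $\mathcal{L}_{\omega_0R_0}\beta_n=0$ only gives $\omega_0\mathcal{L}_{R_0}\beta_n=-\mathbf{d}\omega_0\wedge\iota_{R_0}\beta_n$, and upgrading invariance under the flow of $\omega_0R_0$ to invariance under the circle action is precisely the content of Kruskal's bootstrap, Proposition \ref{bootstrap_prop}; your sketch assumes it. Similarly, ``$\langle\beta_n\rangle=0$ follows from averaging and the symmetry properties of $R_\epsilon$'' is the statement that needs proof (the usable identity is that the average of $\mathcal{L}_{R_\epsilon}\Omega_\epsilon$ over the formal $R_\epsilon$-action vanishes by $2\pi$-periodicity, and one must show this collapses to the $\Phi^0$-average at the first nonvanishing order). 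Your leading-order step can in fact be completed: $\mathcal{L}_{R_0}\Omega_0=-\omega_0^{-2}\,\mathbf{d}\omega_0\wedge\mathbf{d}H_0$ is itself $\Phi^0$-invariant (both $\omega_0$ and $H_0$ are $R_0$-invariant) and has vanishing $\Phi^0$-average, hence is zero. But beyond leading order the induction as written does not close. The economical repair is the paper's own route: verify $\mathcal{L}_{R_0}\Omega_0=0$ as above, note $\mathcal{L}_{X_\epsilon}\Omega_\epsilon=\mathbf{d}\iota_{X_\epsilon}\Omega_\epsilon=0$ so that $\Omega_\epsilon$ is an absolute integral invariant, and then invoke Proposition \ref{bootstrap_prop} (whose proof, in Kruskal's work, uses the normalizing transformation) rather than reproving it inline.
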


\begin{corollary}\label{invariance_of_Hamiltonian}
The roto-rate $R_\epsilon$ for a nearly-periodic Hamiltonian system $X_\epsilon$ on an exact presymplectic manifold $(M,\Omega_\epsilon)$ with Hamiltonian $H_\epsilon$ satisfies $L_{R_\epsilon}H_\epsilon = 0$.
\end{corollary}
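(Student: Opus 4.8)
The plan is to show first that $\mathcal{L}_{R_\epsilon}H_\epsilon$ is a formal power series of \emph{locally constant} functions, and then to use the $2\pi$-periodicity of the roto-rate's formal flow to upgrade ``locally constant'' to ``identically zero''. Both steps are purely formal manipulations with power series in $\epsilon$, carried out coefficient by coefficient, in the sense already fixed by Proposition \ref{existence_of_roto_rate}.

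For the first step, I would differentiate the defining relation $\iota_{X_\epsilon}\Omega_\epsilon = \mathbf{d}H_\epsilon$ along $R_\epsilon$. On the one hand, since the Lie derivative commutes with $\mathbf{d}$,
\[
\mathcal{L}_{R_\epsilon}\mathbf{d}H_\epsilon = \mathbf{d}\,\mathcal{L}_{R_\epsilon}H_\epsilon .
\]
On the other hand, by the Cartan identity $\mathcal{L}_{R_\epsilon}\iota_{X_\epsilon} - \iota_{X_\epsilon}\mathcal{L}_{R_\epsilon} = \iota_{[R_\epsilon,X_\epsilon]}$ applied to $\Omega_\epsilon$,
\[
\mathcal{L}_{R_\epsilon}\big(\iota_{X_\epsilon}\Omega_\epsilon\big) = \iota_{[R_\epsilon,X_\epsilon]}\Omega_\epsilon + \iota_{X_\epsilon}\mathcal{L}_{R_\epsilon}\Omega_\epsilon = 0,
\]
where the first term vanishes because $[X_\epsilon,R_\epsilon]=0$ by the definition of the roto-rate, and the second vanishes because $\mathcal{L}_{R_\epsilon}\Omega_\epsilon = 0$ by Proposition \ref{existence_of_roto_rate}. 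Comparing the two expressions gives $\mathbf{d}\big(\mathcal{L}_{R_\epsilon}H_\epsilon\big)=0$ as a formal power series, i.e. every $\epsilon$-coefficient of the function $c_\epsilon := \mathcal{L}_{R_\epsilon}H_\epsilon$ is closed, hence locally constant on $M$.

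For the second step, observe that because the coefficients of $c_\epsilon$ are locally constant we have $\mathcal{L}_{R_\epsilon}c_\epsilon = \iota_{R_\epsilon}\mathbf{d}c_\epsilon = 0$, and therefore $\mathcal{L}_{R_\epsilon}^k H_\epsilon = \mathcal{L}_{R_\epsilon}^{k-1}c_\epsilon = 0$ for every $k\geq 2$. Expanding the identity $\exp(2\pi\mathcal{L}_{R_\epsilon})=1$ from Proposition \ref{existence_of_roto_rate} acting on $H_\epsilon$ then collapses to
\[
H_\epsilon = H_\epsilon + 2\pi\,\mathcal{L}_{R_\epsilon}H_\epsilon ,
\]
so $\mathcal{L}_{R_\epsilon}H_\epsilon = 0$. (Equivalently, one may integrate $\tfrac{d}{d\theta}(\Phi^\epsilon_\theta)^*H_\epsilon = (\Phi^\epsilon_\theta)^*c_\epsilon = c_\epsilon$ over one period of the formal roto-rate flow $\Phi^\epsilon_\theta$ and use $\Phi^\epsilon_{2\pi}=\mathrm{id}$.)

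The only genuinely non-routine point is this last passage from ``closed'' to ``zero'': the naive hope that $\iota_{R_\epsilon}\mathbf{d}H_\epsilon = \Omega_\epsilon(X_\epsilon,R_\epsilon)$ vanishes term-by-term for elementary algebraic reasons is correct only at leading order (there $X_0=\omega_0 R_0$ makes it vanish by antisymmetry), so the periodicity of the roto-rate is essential. Everything else is bookkeeping with formal power series, for which the conventions of Section \ref{nps_background} already suffice; in particular no connectedness assumption on $M$ is needed since we only ever use $\mathbf{d}c_\epsilon = 0$.
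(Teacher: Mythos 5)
Your proposal is correct and follows essentially the same route as the paper: apply $\mathcal{L}_{R_\epsilon}$ to Hamilton's equation, use $[X_\epsilon,R_\epsilon]=0$ and $\mathcal{L}_{R_\epsilon}\Omega_\epsilon=0$ to get $\mathbf{d}(\mathcal{L}_{R_\epsilon}H_\epsilon)=0$, then invoke the $2\pi$-periodicity of the roto-rate (your expansion of $\exp(2\pi\mathcal{L}_{R_\epsilon})=1$, or the averaging in your parenthetical, which is exactly the paper's closing step) to kill the locally constant remainder. Your explicit handling of the locally-constant case is a minor sharpening of the paper's ``constant function'' phrasing, not a different argument.
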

\begin{proof}
Since $[R_\epsilon,X_\epsilon] = \mathcal{L}_{R_\epsilon}X_\epsilon = 0$ and $\mathcal{L}_{R_\epsilon}\Omega_\epsilon = 0$, we may apply the Lie derivative $\mathcal{L}_{R_\epsilon}$ to Hamilton's equation $\iota_{X_\epsilon}\Omega_\epsilon = \mathbf{d}H_\epsilon$ to obtain
\begin{align*}
\mathcal{L}_{R_\epsilon}(\mathbf{d}H_\epsilon) = \mathcal{L}_{R_\epsilon}(\iota_{X_\epsilon}\Omega_\epsilon) = \iota_{\mathcal{L}_{R_\epsilon} X_\epsilon}\Omega_\epsilon + \iota_{X_\epsilon}(\mathcal{L}_{R_\epsilon}\Omega_\epsilon) = 0.
\end{align*}
Thus, $\mathcal{L}_{R_\epsilon}H_\epsilon$ is a constant function. But by averaging over the $U(1)$-action, we conclude that the constant must be zero.
\end{proof}

To prove Proposition \ref{existence_of_roto_rate}, Kruskal used a pair of technical results, each of which is interesting in its own right. The first establishes the existence of a non-unique normalizing transformation that asymptotically deforms the $U(1)$ action generated by $R_\epsilon$ into the simpler $U(1)$-action generated by $R_0$. The second is a subtle bootstrapping argument that upgrades leading-order $U(1)$-invariance to all-orders $U(1)$-invariance for integral invariants. We state these results here for future reference.

\begin{definition}
Let $G_\epsilon =\epsilon \,G_1+ \epsilon^2\,G_2 + \dots $ be an $O(\epsilon)$ formal power series whose coefficients are vector fields on a manifold $M$. 
The \textbf{Lie series} with \textbf{generator} $G_\epsilon$ is the formal power series $\exp(\mathcal{L}_{G_\epsilon})$ whose coefficients are differential operators on the tensor algebra over $M$. 
\end{definition}

\begin{definition}\label{normalizing_transformation_def}
A \textbf{normalizing transformation} for a nearly-periodic system $X_\epsilon$ with roto-rate $R_\epsilon$ is a Lie series $\exp(\mathcal{L}_{G_\epsilon})$ with generator $G_\epsilon$ such that $R_\epsilon = \exp(\mathcal{L}_{G_\epsilon})R_0$.
\end{definition}

\begin{proposition}[Kruskal]\label{existence_of_normalizing_transformation}
Each nearly-periodic system admits a normalizing transformation.
\end{proposition}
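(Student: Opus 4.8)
The plan is to build the generator $G_\epsilon = \epsilon\,G_1 + \epsilon^2\,G_2 + \dots$ one order at a time, taking the roto-rate $R_\epsilon$ furnished by Proposition \ref{existence_of_roto_rate} as the target (the very same recursion, run without presupposing $R_\epsilon$ and verifying the roto-rate axioms afterward, is what underlies that proposition in the first place). The single analytic tool is the averaging operator for the circle action $\Phi^0_\theta$ generated by $R_0$: for a smooth $\epsilon$-dependent tensor $T$ put $\langle T\rangle = \frac{1}{2\pi}\int_0^{2\pi}(\Phi^0_\theta)^\ast T\,d\theta$ and $\widetilde{T} = T - \langle T\rangle$. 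Because $\exp(2\pi\mathcal{L}_{R_0}) = 1$, the operator $\mathcal{L}_{R_0}$ on tensors decomposes into integer Fourier modes under $\Phi^0_\theta$; its kernel consists exactly of the $\Phi^0$-invariant tensors (those with $\widetilde{T}=0$), and on the complementary subspace of zero-average tensors it is invertible, with a partial inverse $\mathcal{L}_{R_0}^{-1}$ given by an explicit averaging-type integral that visibly preserves smoothness and smooth $\epsilon$-dependence. This is the device that solves the homological equation arising at each order.

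For the inductive step, suppose $G_1,\dots,G_{n-1}$ have been found so that $\widehat{R}^{(n-1)}_\epsilon := \exp(\mathcal{L}_{G^{(n-1)}_\epsilon})R_0$, with $G^{(n-1)}_\epsilon = \sum_{k=1}^{n-1}\epsilon^k G_k$, agrees with $R_\epsilon$ through order $\epsilon^{n-1}$; the base case $n=1$ is immediate since $\widehat{R}^{(0)}_\epsilon = R_0$. Define the obstruction $E_n$ by $R_\epsilon - \widehat{R}^{(n-1)}_\epsilon = \epsilon^n E_n + O(\epsilon^{n+1})$. Appending $\epsilon^n G_n$ to the generator alters the order-$\epsilon^n$ term of $\exp(\mathcal{L}_{G_\epsilon})R_0$ by exactly $[G_n,R_0] = -\mathcal{L}_{R_0}G_n$, since every further term of the Lie series containing the new piece is bracketed against at least one additional $\epsilon^k G_k$ with $k\ge 1$ and is therefore $O(\epsilon^{n+1})$. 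Hence it suffices to pick $G_n$ solving $\mathcal{L}_{R_0}G_n = -E_n$. By the previous paragraph this is solvable precisely when $\langle E_n\rangle = 0$, and one may then take $G_n = -\mathcal{L}_{R_0}^{-1}\widetilde{E}_n$, which is smooth and smoothly $\epsilon$-dependent; any $\Phi^0$-invariant vector field may be added to it, reflecting the non-uniqueness noted in Definition \ref{normalizing_transformation_def}. Assembling the $G_n$ into $G_\epsilon$ closes the induction.

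The crux is thus the vanishing of $\langle E_n\rangle$. The key observation is that $\widehat{R}^{(n-1)}_\epsilon$ is the image of $R_0$ under the formal near-identity transformation generated by $G^{(n-1)}_\epsilon$, so its formal flow is conjugate to that of $R_0$ and $\exp(2\pi\mathcal{L}_{\widehat{R}^{(n-1)}_\epsilon}) = 1$ identically; the roto-rate $R_\epsilon$ satisfies the same identity by hypothesis. Writing $R_\epsilon = \widehat{R}^{(n-1)}_\epsilon + \epsilon^n E_n + O(\epsilon^{n+1})$ and differentiating the constraint $\exp(2\pi\mathcal{L}_V) = 1$ at $V = \widehat{R}^{(n-1)}_\epsilon$ via $\frac{d}{dt}\exp(A(t)) = \int_0^1 e^{sA}\dot{A}\,e^{(1-s)A}\,ds$, the $O(\epsilon^n)$ balance forces the $\epsilon^0$-part of this derivative to vanish; using $\exp(2\pi\mathcal{L}_{R_0}) = 1$ to collapse $e^{2\pi(1-s)\mathcal{L}_{R_0}} = e^{-2\pi s\mathcal{L}_{R_0}}$, the surviving integral becomes the Haar average of $\mathcal{L}_{E_n}$ over $\Phi^0$, i.e.\ $2\pi\,\mathcal{L}_{\langle E_n\rangle} = 0$, and a vector field whose Lie derivative annihilates every function is zero, so $\langle E_n\rangle = 0$. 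I expect this linearization-of-periodicity step to be the main obstacle: the rest is bookkeeping with formal Lie series together with the elementary spectral theory of $\mathcal{L}_{R_0}$, whereas here is where the hypothesis $\exp(2\pi\mathcal{L}_{R_\epsilon}) = 1$ must genuinely be converted into the solvability condition for the homological equation.
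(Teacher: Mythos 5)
Your proposal is correct, but note that the paper offers no proof of this proposition at all---it is quoted from Kruskal---so the comparison is with Kruskal's construction rather than with anything in the text. Your route genuinely differs from his: Kruskal builds the generator $G_\epsilon$ order by order from the requirement that the transformed dynamics $\exp(-\mathcal{L}_{G_\epsilon})X_\epsilon$ be $R_0$-invariant, and only afterwards obtains the roto-rate as $R_\epsilon=\exp(\mathcal{L}_{G_\epsilon})R_0$; you instead take $R_\epsilon$ from Proposition \ref{existence_of_roto_rate} as given and conjugate $R_0$ onto it. Your argument checks out: the inductive bookkeeping (the new term $\epsilon^n G_n$ shifts the order-$\epsilon^n$ coefficient by exactly $[G_n,R_0]=-\mathcal{L}_{R_0}G_n$), the solvability theory for $\mathcal{L}_{R_0}$ with its explicit partial inverse on zero-average tensors, and above all the crux: both $R_\epsilon$ and $\widehat{R}^{(n-1)}_\epsilon=\exp(\mathcal{L}_{G^{(n-1)}_\epsilon})R_0$ satisfy $\exp(2\pi\mathcal{L}_V)=1$ (the latter by conjugation, $\mathcal{L}_{\widehat{R}^{(n-1)}_\epsilon}=\exp(\mathcal{L}_{G^{(n-1)}_\epsilon})\mathcal{L}_{R_0}\exp(-\mathcal{L}_{G^{(n-1)}_\epsilon})$), and the Duhamel difference of the two identities at order $\epsilon^n$, after the collapse $e^{2\pi(1-s)\mathcal{L}_{R_0}}=e^{-2\pi s\mathcal{L}_{R_0}}$ and the identity $e^{\theta\mathcal{L}_{R_0}}\mathcal{L}_{E_n}e^{-\theta\mathcal{L}_{R_0}}=\mathcal{L}_{(\Phi^0_\theta)^*E_n}$, yields $2\pi\,\mathcal{L}_{\langle E_n\rangle}=0$ and hence $\langle E_n\rangle=0$; all steps are legitimate order by order because the $\epsilon^0$ flow is the globally defined circle action. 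What each approach buys: yours is shorter, is perfectly valid where the proposition sits in the paper (Proposition \ref{existence_of_roto_rate} is already available), and isolates precisely where the periodicity axiom $\exp(2\pi\mathcal{L}_{R_\epsilon})=1$ does work---it is exactly what annihilates the averaged obstruction. The price, which you acknowledge, is that your proof cannot be recycled into a self-contained derivation of Proposition \ref{existence_of_roto_rate} itself, since Kruskal's proof of that result proceeds through this very normalizing transformation; the version that earns both propositions simultaneously is the recursion run on $X_\epsilon$ (making it $R_0$-invariant) without presupposing $R_\epsilon$, with the roto-rate axioms verified afterwards.
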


\begin{proposition}\label{bootstrap_prop}
Let $\alpha_\epsilon$ be a smooth $\epsilon$-dependent differential form on a manifold $M$. Suppose $\alpha_\epsilon$ is an absolute integral invariant for a $C^\infty$ nearly-periodic system $X_\epsilon$ on $M$. If $\mathcal{L}_{R_0}\alpha_0 = 0$ then $\mathcal{L}_{R_\epsilon}\alpha_\epsilon = 0$, where $R_\epsilon$ is the roto-rate for $X_\epsilon$.
\end{proposition}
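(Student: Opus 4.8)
The plan is to split off the asymptotic tail of the roto-rate with a normalizing transformation, reducing the claim to a statement about the leading-order generator $R_0$ and its genuine circle action, and then to run a bootstrapping induction on the order in $\epsilon$ in the spirit of Kruskal. By Proposition~\ref{existence_of_normalizing_transformation} there is a normalizing transformation $\exp(\mathcal{L}_{G_\epsilon})$, with $O(\epsilon)$ generator $G_\epsilon$, such that $R_\epsilon = \exp(\mathcal{L}_{G_\epsilon})R_0$. Since $\exp(\mathcal{L}_{G_\epsilon})$ is an invertible formal algebra automorphism that intertwines Lie derivatives, $\exp(\mathcal{L}_{G_\epsilon})\,\mathcal{L}_Y\,\exp(-\mathcal{L}_{G_\epsilon}) = \mathcal{L}_{\exp(\mathcal{L}_{G_\epsilon})Y}$, setting $\tilde\alpha_\epsilon := \exp(-\mathcal{L}_{G_\epsilon})\alpha_\epsilon$ and $\tilde X_\epsilon := \exp(-\mathcal{L}_{G_\epsilon})X_\epsilon$ yields $\mathcal{L}_{R_\epsilon}\alpha_\epsilon = \exp(\mathcal{L}_{G_\epsilon})\big(\mathcal{L}_{R_0}\tilde\alpha_\epsilon\big)$, so it suffices to prove $\mathcal{L}_{R_0}\tilde\alpha_\epsilon = 0$. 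The transported data inherit the relations $\mathcal{L}_{\tilde X_\epsilon}\tilde\alpha_\epsilon = \exp(-\mathcal{L}_{G_\epsilon})\big(\mathcal{L}_{X_\epsilon}\alpha_\epsilon\big) = 0$ (absolute invariance), $[\tilde X_\epsilon,R_0] = \exp(-\mathcal{L}_{G_\epsilon})[X_\epsilon,R_\epsilon] = 0$, and $\tilde X_0 = X_0 = \omega_0 R_0$, $\tilde\alpha_0 = \alpha_0$ because $G_0 = 0$; hence $\mathcal{L}_{R_0}\tilde\alpha_0 = \mathcal{L}_{R_0}\alpha_0 = 0$ by hypothesis.

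Next I would bootstrap. Put $\gamma_\epsilon := \mathcal{L}_{R_0}\tilde\alpha_\epsilon$; the goal is $\gamma_\epsilon = 0$ as a formal power series. Since $[\tilde X_\epsilon,R_0]=0$, the operators $\mathcal{L}_{\tilde X_\epsilon}$ and $\mathcal{L}_{R_0}$ commute, so $\mathcal{L}_{\tilde X_\epsilon}\gamma_\epsilon = \mathcal{L}_{R_0}\mathcal{L}_{\tilde X_\epsilon}\tilde\alpha_\epsilon = 0$. We have $\gamma_0 = 0$; assume inductively $\gamma_\epsilon = O(\epsilon^k)$ for some $k\ge 1$ and write $\gamma_\epsilon = \epsilon^k\gamma_k + O(\epsilon^{k+1})$. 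Extracting the $\epsilon^k$-coefficient of $\mathcal{L}_{\tilde X_\epsilon}\gamma_\epsilon = 0$ and using $\tilde X_0 = \omega_0 R_0$ gives $\mathcal{L}_{\omega_0 R_0}\gamma_k = 0$. Moreover $\gamma_k = \mathcal{L}_{R_0}\tilde\alpha_k$ is visibly a Lie derivative along $R_0$, so its $U(1)$-average vanishes: $\langle\gamma_k\rangle = \tfrac{1}{2\pi}\int_0^{2\pi}\tfrac{d}{d\theta}\big((\Phi^0_\theta)^*\tilde\alpha_k\big)\,d\theta = 0$. If $\gamma_k = 0$ follows from these two facts, the induction closes and the proof is complete.

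The crux is thus the following linear lemma: if a smooth form $T$ on $M$ satisfies $\mathcal{L}_{\omega_0 R_0}T = 0$ and $\langle T\rangle = 0$, then $T = 0$. I would Fourier-decompose $T = \sum_n T^{(n)}$ along the circle action, with $T^{(n)} := \tfrac{1}{2\pi}\int_0^{2\pi} e^{-in\theta}(\Phi^0_\theta)^*T\,d\theta$ satisfying $\mathcal{L}_{R_0}T^{(n)} = in\,T^{(n)}$ and $T^{(0)} = \langle T\rangle = 0$. By Cartan's formula $\mathcal{L}_{\omega_0 R_0} = \omega_0\,\mathcal{L}_{R_0} + \mathbf{d}\omega_0\wedge\iota_{R_0}$; since $\omega_0$ and $\mathbf{d}\omega_0$ are $U(1)$-invariant (as $\mathcal{L}_{R_0}\omega_0 = 0$) and $\iota_{R_0}$ commutes with $(\Phi^0_\theta)^*$, projecting the equation onto mode $n$ yields $in\,\omega_0\,T^{(n)} + \mathbf{d}\omega_0\wedge\iota_{R_0}T^{(n)} = 0$. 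Contracting with $R_0$ and using $\iota_{R_0}\mathbf{d}\omega_0 = \mathcal{L}_{R_0}\omega_0 = 0$ and $\iota_{R_0}^2 = 0$ annihilates the second term, leaving $in\,\omega_0\,\iota_{R_0}T^{(n)} = 0$; since $\omega_0 > 0$ this forces $\iota_{R_0}T^{(n)} = 0$ for $n\neq 0$, and substituting back gives $in\,\omega_0\,T^{(n)} = 0$, hence $T^{(n)} = 0$ for every $n\neq 0$. Together with $T^{(0)} = 0$ and pointwise uniqueness of Fourier coefficients on $U(1)$, $T = 0$; applying this to $T = \gamma_k$ finishes the argument.

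I expect this lemma to be the main obstacle: the positive but generally non-constant factor $\omega_0$ blocks a naive division reducing $\mathcal{L}_{\omega_0 R_0}$ to $\mathcal{L}_{R_0}$, and it is precisely here that the hypotheses $\omega_0 > 0$ and $\mathcal{L}_{R_0}\omega_0 = 0$ are essential — the contraction-with-$R_0$ step being the device that resolves it. The remaining ingredients, the reduction via the normalizing transformation and the order-by-order extraction in the induction, are formal-power-series bookkeeping that must be carried out carefully but present no conceptual difficulty.
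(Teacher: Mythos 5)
Your proof is correct. The paper states this proposition without proof, attributing it to Kruskal's bootstrapping argument, and your route — conjugating by a normalizing transformation so the roto-rate becomes $R_0$, then inducting order by order using $\mathcal{L}_{\tilde X_0}\gamma_k=0$, the vanishing average of $\gamma_k=\mathcal{L}_{R_0}\tilde\alpha_k$, and the Fourier-mode lemma in which the nonconstant factor $\omega_0$ is handled by contracting with $R_0$ (using $\omega_0>0$ and $\mathcal{L}_{R_0}\omega_0=0$) — is essentially that same argument, carried out soundly and without circularity, since the existence of the roto-rate and of the normalizing transformation are established independently of this proposition.
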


According to Noether's celebrated theorem, a Hamiltonian system that admits a continuous family of symmetries also admits a corresponding conserved quantity. Therefore one might expect that a Hamiltonian system that admits an approximate symmetry must also have an approximate conservation law. Kruskal showed that this is indeed the case for nearly-periodic Hamiltonian systems, as the following generalization of his argument shows.

\begin{proposition}\label{existence_of_mu}
Let $X_\epsilon$ be a nearly-periodic Hamiltonian system on the exact presymplectic manifold $(M,\Omega_\epsilon)$. Let $R_\epsilon$ be the associated roto-rate. There is a formal power series $\overline{\vartheta}_\epsilon = \overline{\vartheta}_0 + \epsilon\,\overline{\vartheta}_1 + \dots$ with coefficients in $\Omega^1(M)$ such that $\Omega_\epsilon = -\mathbf{d}\overline{\vartheta}_\epsilon$ and $\mathcal{L}_{R_\epsilon}\overline{\vartheta}_\epsilon = 0$. Moreover, the formal power series $\mu_\epsilon = \iota_{R_\epsilon}\overline{\vartheta}_\epsilon$ is a constant of motion for $X_\epsilon$ to all orders in perturbation theory. In other words,
\begin{align*}
\mathcal{L}_{X_\epsilon}\mu_\epsilon = 0,
\end{align*}
in the sense of formal power series.
\end{proposition}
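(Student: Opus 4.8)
The plan is to split the argument into two independent pieces: first produce a $U(1)$-invariant primitive $\overline{\vartheta}_\epsilon$ of $\Omega_\epsilon$, and then read off conservation of $\mu_\epsilon=\iota_{R_\epsilon}\overline{\vartheta}_\epsilon$ by a short Cartan-calculus computation. For the first piece I would average over the formal circle action generated by the roto-rate. Since $\exp(2\pi\mathcal{L}_{R_\epsilon})=1$ and $\exp((\theta+2\pi)\mathcal{L}_{R_\epsilon})=\exp(\theta\mathcal{L}_{R_\epsilon})\exp(2\pi\mathcal{L}_{R_\epsilon})$ as formal power series, the operator $\theta\mapsto\exp(\theta\mathcal{L}_{R_\epsilon})$ is $2\pi$-periodic order by order in $\epsilon$, so the averaging operator $\langle\beta_\epsilon\rangle:=\tfrac{1}{2\pi}\int_0^{2\pi}\exp(\theta\mathcal{L}_{R_\epsilon})\beta_\epsilon\,d\theta$ is well defined on formal power series of differential forms. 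Starting from any $\epsilon$-dependent primitive $\vartheta_\epsilon$ with $\Omega_\epsilon=-\mathbf{d}\vartheta_\epsilon$, I would set $\overline{\vartheta}_\epsilon:=\langle\vartheta_\epsilon\rangle$. Differentiating under the integral gives $\mathcal{L}_{R_\epsilon}\overline{\vartheta}_\epsilon=\tfrac{1}{2\pi}\big(\exp(2\pi\mathcal{L}_{R_\epsilon})\vartheta_\epsilon-\vartheta_\epsilon\big)=0$; and since $\mathbf{d}$ commutes with every $\exp(\theta\mathcal{L}_{R_\epsilon})$ while $\mathcal{L}_{R_\epsilon}\Omega_\epsilon=0$ by Proposition \ref{existence_of_roto_rate} (so $\exp(\theta\mathcal{L}_{R_\epsilon})\Omega_\epsilon=\Omega_\epsilon$), we get $\mathbf{d}\overline{\vartheta}_\epsilon=\langle\mathbf{d}\vartheta_\epsilon\rangle=\langle-\Omega_\epsilon\rangle=-\Omega_\epsilon$, as required. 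An equivalent route, if one prefers to avoid the formal $\theta$-integral, is to pull $\vartheta_\epsilon$ back by the normalizing transformation of Proposition \ref{existence_of_normalizing_transformation}, average over the genuine circle action $\Phi^0_\theta$ generated by $R_0$, and push forward.

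For the second piece I would compute $\mathcal{L}_{X_\epsilon}\mu_\epsilon$ directly. Because $[X_\epsilon,R_\epsilon]=0$, we have $\mathcal{L}_{X_\epsilon}\mu_\epsilon=\mathcal{L}_{X_\epsilon}(\iota_{R_\epsilon}\overline{\vartheta}_\epsilon)=\iota_{R_\epsilon}\mathcal{L}_{X_\epsilon}\overline{\vartheta}_\epsilon$. Cartan's formula and Hamilton's equation $\iota_{X_\epsilon}\Omega_\epsilon=\mathbf{d}H_\epsilon$ give $\mathcal{L}_{X_\epsilon}\overline{\vartheta}_\epsilon=\mathbf{d}\,\iota_{X_\epsilon}\overline{\vartheta}_\epsilon+\iota_{X_\epsilon}\mathbf{d}\overline{\vartheta}_\epsilon=\mathbf{d}\big(\iota_{X_\epsilon}\overline{\vartheta}_\epsilon-H_\epsilon\big)$, hence $\mathcal{L}_{X_\epsilon}\mu_\epsilon=\iota_{R_\epsilon}\mathbf{d}\big(\iota_{X_\epsilon}\overline{\vartheta}_\epsilon-H_\epsilon\big)=\mathcal{L}_{R_\epsilon}\big(\iota_{X_\epsilon}\overline{\vartheta}_\epsilon\big)-\mathcal{L}_{R_\epsilon}H_\epsilon$. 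The last term vanishes by Corollary \ref{invariance_of_Hamiltonian}, and the first equals $\iota_{[R_\epsilon,X_\epsilon]}\overline{\vartheta}_\epsilon+\iota_{X_\epsilon}\mathcal{L}_{R_\epsilon}\overline{\vartheta}_\epsilon=0$ since $[R_\epsilon,X_\epsilon]=0$ and $\mathcal{L}_{R_\epsilon}\overline{\vartheta}_\epsilon=0$ from the first piece. Therefore $\mathcal{L}_{X_\epsilon}\mu_\epsilon=0$ in the sense of formal power series.

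The computation in the second piece is a routine exercise once $\overline{\vartheta}_\epsilon$ is in hand, so the only real care is needed in the construction of $\overline{\vartheta}_\epsilon$: one must check that $\exp(\theta\mathcal{L}_{R_\epsilon})$ is, coefficient by coefficient in $\epsilon$, a smooth and $2\pi$-periodic family of differential operators (each coefficient solves a linear ODE in $\theta$ with smooth data forced by lower orders), that the $\theta$-integral commutes with $\mathbf{d}$ and with contraction by a fixed vector field, and that the resulting coefficients $\overline{\vartheta}_k$ are genuine smooth $1$-forms on $M$ (integrals over the compact interval $[0,2\pi]$ of smooth families). I would either verify these bookkeeping points directly or, as noted above, sidestep them by reducing to averaging over the honest $U(1)$ action $\Phi^0_\theta$ after conjugating by a normalizing transformation.
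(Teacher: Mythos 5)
Your proposal is correct and takes essentially the same approach as the paper: average an arbitrary primitive over the formal $U(1)$-action generated by $R_\epsilon$ (using $\mathcal{L}_{R_\epsilon}\Omega_\epsilon=0$ from Proposition \ref{existence_of_roto_rate}) to get the invariant primitive $\overline{\vartheta}_\epsilon$, then conclude $\mathcal{L}_{X_\epsilon}\mu_\epsilon=0$ by Cartan calculus, Hamilton's equation, and Corollary \ref{invariance_of_Hamiltonian}. The only difference is cosmetic: the paper computes directly $\mathcal{L}_{X_\epsilon}\mu_\epsilon=\iota_{X_\epsilon}\mathbf{d}\iota_{R_\epsilon}\overline{\vartheta}_\epsilon=-\iota_{R_\epsilon}\iota_{X_\epsilon}\Omega_\epsilon=-\mathcal{L}_{R_\epsilon}H_\epsilon=0$ without invoking $[X_\epsilon,R_\epsilon]=0$ at that stage, whereas you route the same identities through the exactness of $\mathcal{L}_{X_\epsilon}\overline{\vartheta}_\epsilon$.
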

\begin{proof}
To construct the $U(1)$-invariant primitive $\overline{\vartheta}_\epsilon$ we select an arbitrary primitive $\vartheta_\epsilon$ for $\Omega_\epsilon$ and set
\begin{align*}
\overline{\vartheta}_\epsilon = \frac{1}{2\pi}\int_0^{2\pi}\exp(\theta\mathcal{L}_{R_\epsilon})\vartheta_\epsilon\,d\theta.
\end{align*}
This formal power series satisfies $\mathcal{L}_{R_\epsilon}\overline{\vartheta}_\epsilon=0$ because
\begin{align*}
\mathcal{L}_{R_\epsilon}\overline{\vartheta}_\epsilon = \frac{1}{2\pi}\int_0^{2\pi}\frac{d}{d\theta}\exp(\theta\mathcal{L}_{R_\epsilon})\vartheta_\epsilon\,d\theta = 0.
\end{align*}
Moreover, since $\mathcal{L}_{R_\epsilon}\Omega_\epsilon = 0$ by Kruskal's Proposition \ref{existence_of_roto_rate}, we have
\begin{align*}
-\mathbf{d}\overline{\vartheta}_\epsilon = \frac{1}{2\pi}\int_0^{2\pi}\exp(\theta\mathcal{L}_{R_\epsilon})\Omega_\epsilon\,d\theta = \frac{1}{2\pi}\int_0^{2\pi}\Omega_\epsilon\,d\theta = \Omega_\epsilon,
\end{align*}
whence $\overline{\vartheta}_\epsilon$ is a primitive for $\Omega_\epsilon$.

To establish all-orders time-independence of $\mu_\epsilon = \iota_{R_\epsilon}\overline{\vartheta}_\epsilon$, we apply Cartan's formula and Corollary \ref{invariance_of_Hamiltonian} according to
\begin{align*}
\mathcal{L}_{X_\epsilon}\mu_\epsilon = \iota_{X_\epsilon}\mathbf{d}\iota_{R_\epsilon}\overline{\vartheta}_\epsilon  =  - \iota_{R_\epsilon}\iota_{X_\epsilon}\Omega_\epsilon = - \mathcal{L}_{R_\epsilon}H_\epsilon =  0. 
\end{align*}
\end{proof}

\begin{definition}\label{mu_defined}
The formal constant of motion $\mu_\epsilon$ provided by Proposition \ref{existence_of_mu} is the \textbf{adiabatic invariant} associated with a nearly-periodic Hamiltonian system.
\end{definition}

\section{Slow manifolds for nearly-periodic systems\label{sms_for_nps}}

Let $X_\epsilon$ be a smooth $\epsilon$-dependent vector field on a manifold $M$ equipped with an auxilliary Riemannian metric $g$. Without loss of generality, assume $X_\epsilon = O(1)$. An $N^{\text{th}}$-order \emph{slow manifold} $S_\epsilon\subset M$ for $X_\epsilon$ is an $\epsilon$-dependent submanifold such that the normal component of $X_\epsilon$ along $S_\epsilon$ is $O(\epsilon^{N+1})$ and the tangential component is $O(\epsilon)$. Note in particular that $S_0$ must be a manifold of fixed points for $X_0$. A slow manifold is \emph{elliptic} if the normal linearized dynamics for $X_0$ along $S_0$ are purely oscillatory.  

Intuitively, trajectories for $X_\epsilon$ that begin near an elliptic slow manifold $S_\epsilon$ should slowly drift along  $S_\epsilon$ (while rapidly oscillating around it) for some large time interval before possibly wandering away. By dropping the normal component $X^\perp_\epsilon$ of $X_\epsilon$ along $S_\epsilon$, one obtains a well-defined vector field $X_\epsilon^\parallel$ on $S_\epsilon$ that may be interpreted as a model for the slow drift dynamics. By way of this procedure, elliptic slow manifolds give rise to formal ``reduced" models in various scientific disciplines, especially in plasma physics, with its numerous multiscale models. The purpose of this Section is to prove that nearly-periodic systems always admit elliptic slow manifolds of every order.

Before proceeding, we must resolve a technical issue. The ``definition'' of slow manifolds given above is somewhat imprecise since $S_\epsilon$ moves as $\epsilon\rightarrow 0$. In order to eliminate this ambiguity, we introduce parameterizations.

\begin{definition}\label{parameterized_sm_def}
Let $X_\epsilon$ be a smooth $\epsilon$-dependent vector field on a manifold $M$ such that $X_0$ is not identically zero. An $N^{\text{th}}$-order \textbf{parameterized slow manifold} for $X_\epsilon$ is a smooth $\epsilon$-dependent embedding $\mathcal{S}_\epsilon:S_0\rightarrow M$ of some fixed manifold $S_0$ into $M$ with the following two properties.
\begin{itemize}
    \item[(1)] For each $s_0\in S_0$, $|X_\epsilon^\perp(\mathcal{S}_\epsilon(s_0))| = O(\epsilon^{N+1})$ as $\epsilon\rightarrow 0$.
    \item[(2)] For each $s_0\in S_0$, $|X_\epsilon^\parallel(\mathcal{S}_\epsilon(s_0))| = O(\epsilon)$ as $\epsilon\rightarrow 0$.
\end{itemize}
\end{definition}

Suppose $\varphi_\epsilon:S_0\rightarrow S_0$ is a smooth $\epsilon$-dependent diffeomorphism. If $\mathcal{S}_\epsilon$ is a parameterized slow manifold then $\mathcal{S}_\epsilon^\prime = \mathcal{S}_\epsilon\circ \varphi_\epsilon$ is also a parameterized slow manifold with the same image and of the same order. We may therefore precisely define an $N^{\text{th}}$-order \textbf{slow manifold} as a smooth $\epsilon$-dependent submanifold $S_\epsilon\subset M$ that is the image of some $N^{\text{th}}$-order parameterized slow manifold.

We will now suppose that $X_\epsilon$ is a nearly-periodic system, as defined in Section \ref{nps_background}, and proceed to construct slow manifolds. Our overarching strategy will be to find vector fields $X_\epsilon^{(N)}$ that agree with $X_\epsilon$ modulo terms that are $O(\epsilon^{N+1})$ and that possess genuine invariant submanifolds $S_\epsilon^{(N)}$. Then we will prove that certain open subsets of $S_\epsilon^{(N)}$ are $N^{\text{th}}$-order slow manifolds for $X_\epsilon$.

The motivation for this strategy comes from Kruskal's result (Proposition \ref{existence_of_normalizing_transformation}) on existence of normalizing transformations for nearly-periodic systems. In a formal sense, normalizing transformations provide coordinates that expose a hidden $U(1)$-symmetry underlying each nearly-periodic system. The connection between this observation and invariant manifold theory is that the set of $U(1)$-invariant points in phase space must be an invariant manifold for any dynamical system with $U(1)$-symmetry. The hitch in this argument, and the reason we will only find slow manifolds, rather than genuine invariant manifolds, is that Kruskal's normalizing transformations are only defined as formal power series. We will be forced to truncate these series, and in the process lose exact invariance. But since we may truncate at any order, that loss of invariance can be made arbitrarily small.

\begin{theorem}\label{sm_existence_thm}
Let $X_\epsilon$ be a nearly-periodic system on a manifold $M$. The associated limiting slow manifold, $S_0$, is a $0^{\text{th}}$-order slow manifold for $X_\epsilon$. Moreover, for each $N>0$ and each codimension-$0$ compact submanifold $C_0\subset M$, with or without boundary, there exists an $N^{\text{th}}$-order parameterized slow manifold $\mathcal{S}_\epsilon^{(N)}:\Sigma_0\rightarrow M$, for $X_\epsilon$, where $\Sigma_0 = S_0\cap \mathrm{int}\,C_0$.
\end{theorem}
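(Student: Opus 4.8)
The plan is to build the slow manifold from Kruskal's roto-rate and normalizing transformation by truncating at order $N$ and exploiting the fact that the fixed-point set of a genuine $U(1)$-action is a genuine invariant submanifold. First I would establish that $S_0 = \{R_0 = 0\}$ is indeed a submanifold (this is Lemma~\ref{S0_is_submanifold}, which I would invoke) and check the $0^{\text{th}}$-order claim directly: along $S_0$ we have $X_0 = \omega_0 R_0 = 0$, so both $X_0^\perp$ and $X_0^\parallel$ vanish on $S_0$, giving $|X_\epsilon^\perp| = O(\epsilon)$ and $|X_\epsilon^\parallel| = O(\epsilon)$, which is more than enough for $N = 0$ with the trivial embedding $\mathcal{S}_\epsilon = \iota_{S_0}$.

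For $N > 0$, let $R_\epsilon = R_0 + \epsilon R_1 + \dots$ be the roto-rate (Proposition~\ref{existence_of_roto_rate}) and $\exp(\mathcal{L}_{G_\epsilon})$ a normalizing transformation (Proposition~\ref{existence_of_normalizing_transformation}), so $R_\epsilon = \exp(\mathcal{L}_{G_\epsilon})R_0$ as formal power series. I would truncate $G_\epsilon$ at order $N$, obtaining a genuine (smooth, $\epsilon$-dependent) near-identity diffeomorphism $g_\epsilon$ on the compact set $C_0$; its time-one flow maps $S_0$ to a submanifold $S_\epsilon^{(N)} = g_\epsilon(S_0)$, and I define the parameterized slow manifold by $\mathcal{S}_\epsilon^{(N)} = g_\epsilon \circ \iota$ restricted to $\Sigma_0 = S_0 \cap \mathrm{int}\,C_0$. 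The truncated transformation pushes the $U(1)$-action generated by $R_0$ forward to an action that agrees with the one generated by $R_\epsilon$ modulo $O(\epsilon^{N+1})$; equivalently, there is a vector field $X_\epsilon^{(N)}$ agreeing with $X_\epsilon$ up to $O(\epsilon^{N+1})$ for which $S_\epsilon^{(N)}$ is exactly invariant — concretely, $X_\epsilon^{(N)} = (g_\epsilon)_* \widetilde{X}_\epsilon^{(N)}$ where $\widetilde{X}_\epsilon^{(N)}$ is a suitable $R_0$-invariant truncation of the pulled-back field $g_\epsilon^* X_\epsilon$, which necessarily vanishes on $S_0$ because $[X_\epsilon, R_\epsilon] = 0$ forces (order by order) the truncated pulled-back field to commute with $R_0$ and hence to be tangent to — in fact to vanish on — the fixed set $S_0$. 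Pushing this exact invariance forward, $X_\epsilon^\perp$ restricted to $S_\epsilon^{(N)}$ equals $(X_\epsilon - X_\epsilon^{(N)})^\perp$, which is $O(\epsilon^{N+1})$, establishing property~(1). Property~(2) follows because the whole construction is a near-identity deformation of $S_0$, on which $X_0$ vanishes, so $X_\epsilon^\parallel = O(\epsilon)$ automatically. Compactness of $C_0$ is what makes the truncation errors uniformly $O(\epsilon^{N+1})$ and keeps $g_\epsilon$ a genuine diffeomorphism onto its image for small $\epsilon$.

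The main obstacle I anticipate is the bookkeeping that converts formal-power-series statements into honest finite-$\epsilon$ estimates: one must argue carefully that truncating $G_\epsilon$ at order $N$ yields a field $X_\epsilon^{(N)}$ that (a) genuinely differs from $X_\epsilon$ by $O(\epsilon^{N+1})$ in the $C^0$ norm on $C_0$ with respect to the auxiliary metric $g$, and (b) genuinely vanishes on $g_\epsilon(S_0)$, not merely to formal order. The cleanest route is to work in the normalizing coordinates where the truncated dynamics is literally $U(1)$-equivariant for the $R_0$-action, note that the fixed set $S_0$ is a true invariant submanifold there on which the equivariant field must vanish (since $X_0|_{S_0}=0$ and equivariance propagates this), and then transport everything back by the genuine diffeomorphism $g_\epsilon$. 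A secondary point requiring care is that "normal" and "tangential" are defined via the metric $g$ and the moving submanifold $S_\epsilon^{(N)}$; since $g$ is fixed and $S_\epsilon^{(N)}$ depends smoothly on $\epsilon$ with $S_0^{(N)} = S_0$, the splitting $TM = TS_\epsilon^{(N)} \oplus (TS_\epsilon^{(N)})^\perp$ along $\mathcal{S}_\epsilon^{(N)}$ varies smoothly, so the orders of $X_\epsilon^\perp$ and $X_\epsilon^\parallel$ are well-defined and stable under the small perturbation from $X_\epsilon^{(N)}$ to $X_\epsilon$.
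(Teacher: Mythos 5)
Your construction is essentially the paper's: normalize via Kruskal's transformation, truncate the generator at order $N$ to get a genuine near-identity diffeomorphism, observe that the truncated normalized field commutes with $R_0$ so that the fixed-point set $S_0$ is an exact invariant manifold for it, transport back, and compare with $X_\epsilon$ to get the $O(\epsilon^{N+1})$ normal and $O(\epsilon)$ tangential estimates. However, there is one genuinely wrong claim, made twice: that the $R_0$-equivariant truncated field must \emph{vanish} on $S_0$ ``since $X_0|_{S_0}=0$ and equivariance propagates this.'' Equivariance does not propagate vanishing. Commuting with $R_0$ only forces the field to be \emph{tangent} to the fixed-point set $S_0$ (this is exactly Lemma \ref{fixed_points_are_ims} in the paper); the $O(\epsilon)$ and higher coefficients of the normalized field commute with $R_0$ and are tangent to $S_0$ but are generically nonzero there -- they \emph{are} the slow drift dynamics. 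If they really vanished, the slow manifold would consist of near-equilibria and property (2) would come out as $O(\epsilon^{N+1})$ rather than $O(\epsilon)$, contradicting the whole point of a slow manifold. The good news is that your load-bearing step -- $(X_\epsilon^{(N)})^\perp=0$ along $g_\epsilon(S_0)$, hence $X_\epsilon^\perp=(X_\epsilon-X_\epsilon^{(N)})^\perp=O(\epsilon^{N+1})$ there -- uses only tangency, so the argument is repaired simply by replacing ``vanishes on $S_0$'' with ``is tangent to $S_0$'' and citing the invariance of fixed-point sets under commuting flows.

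A secondary difference from the paper concerns non-compact $M$: the paper multiplies the truncated generator $G_\epsilon^{(N)}$ by a bump function supported in a slightly larger compact submanifold $C_0''$, so that its time-one flow exists globally; you instead restrict attention to a compact neighborhood of $C_0$ and shrink $\epsilon$. That is acceptable, but you should state explicitly that the flow of the truncated generator is only defined (and only needed) on such a neighborhood for sufficiently small $\epsilon$, and that the $O(\epsilon^{N+1})$ comparison between $X_\epsilon^{(N)}$ and $X_\epsilon$ is uniform precisely because it is carried out on that compact set.
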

\begin{remark}
It is worth highlighting two limiting cases of the theorem. (1) If the set of equilibrium points for $R_0$ is empty, then the theorem is vacuously true; the slow manifolds are merely empty sets. (2) If $M$ is compact, we may take $C_0 = M$ and conclude that $X_\epsilon$ admits a slow manifold diffeomorphic to $S_0$ at each order $N$.  For non-compact $M$, the slow manifolds provided by the theorem may fail to be diffeomorphic to $S_0$.    
\end{remark}

To prove Theorem \ref{sm_existence_thm}, we will use a pair of supporting Lemmas.
\begin{lemma}[See e.g. \cite{Kankaanrinta_2007}]\label{S0_is_submanifold}
The set of fixed points $Z\subset M$ of a $U(1)$-action on a manifold $M$ is an embedded submanifold.
\end{lemma}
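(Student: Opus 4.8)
The plan is to prove the statement locally: being an embedded submanifold is a local property, so it suffices to produce, around each fixed point $p\in Z$, a chart of $M$ in which $Z$ appears as a linear subspace. The key tool is the linearization of a compact group action near a fixed point, which I would realize concretely via a geodesic normal coordinate system for an invariant metric.

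First I would average the auxiliary Riemannian metric over the circle: writing $\Phi_\theta$, $\theta\in U(1)$, for the given action and setting $\bar{g} = \frac{1}{2\pi}\int_0^{2\pi}(\Phi_\theta)^*g\,d\theta$ produces a smooth $U(1)$-invariant Riemannian metric, so that $U(1)$ acts on $(M,\bar{g})$ by isometries. Compactness of $U(1)$ is what makes this average well-defined and smooth.

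Next, fix $p\in Z$. Since each $\Phi_\theta$ fixes $p$, the differentials $T_p\Phi_\theta$ assemble into the isotropy representation of $U(1)$ on $T_pM$, which is orthogonal with respect to $\bar{g}_p$. Decompose $T_pM = V_0\oplus V_0^\perp$, where $V_0 = \{v\in T_pM : T_p\Phi_\theta\,v = v \text{ for all }\theta\}$ is the fixed subspace of the isotropy representation. Because the $\Phi_\theta$ are isometries fixing $p$, the exponential map $\exp_p^{\bar{g}}: T_pM\supset U\to M$ of $\bar{g}$ is $U(1)$-equivariant, $\exp_p^{\bar{g}}(T_p\Phi_\theta\,v) = \Phi_\theta(\exp_p^{\bar{g}}v)$, since both sides are geodesics of $\bar{g}$ issuing from $p$ with the same initial velocity. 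Shrinking $U$ so that $\exp_p^{\bar{g}}$ is a diffeomorphism onto a neighborhood of $p$, equivariance shows that $\exp_p^{\bar{g}}(v)$ is fixed by the whole action if and only if $T_p\Phi_\theta\,v = v$ for all $\theta$, i.e. if and only if $v\in V_0$. Hence $Z\cap\exp_p^{\bar{g}}(U) = \exp_p^{\bar{g}}(V_0\cap U)$, and choosing linear coordinates on $T_pM$ adapted to the splitting $V_0\oplus V_0^\perp$ turns $\exp_p^{\bar{g}}$ into a submanifold chart for $Z$ near $p$. Finally I would observe that $Z = \bigcap_\theta \{m\in M: \Phi_\theta(m) = m\}$ is closed in $M$, so $Z$ is in fact a closed embedded submanifold; its dimension near $p$ equals $\dim V_0$ and may differ between connected components of $Z$.

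The one genuine point requiring care is the equivariance of $\exp_p^{\bar{g}}$, which rests on the standard Riemannian fact that isometries carry geodesics to geodesics and therefore commute with the exponential map; everything else is bookkeeping. As an alternative to the explicit geodesic argument, one may instead invoke Bochner's linearization theorem directly to obtain a $U(1)$-equivariant chart centered at $p$ in which the action is the linear isotropy representation, and then read off the same conclusion.
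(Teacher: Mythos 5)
Your proposal is correct and follows essentially the same route as the paper's proof: average the metric over the circle action, use equivariance of the Riemannian exponential map at a fixed point, and identify the fixed set locally with the fixed subspace of the linear isotropy representation to produce a submanifold chart. The only (minor) refinement in the paper is that the neighborhood in $T_pM$ is chosen $U(1)$-invariant before applying injectivity of $\exp_p$, which your orthogonality observation already makes available by taking a metric ball.
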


\begin{proof}
Let $\Phi_\theta:M\rightarrow M$, $\theta\in U(1)$, be the $U(1)$-action, and let $R$ be the corresponding infinitesimal generator.
Assume that $M$ is equipped with a metric tensor $g$ that satisfies $\mathcal{L}_{R}g = 0$. Note that if $g$ is an arbitrary metric on $M$, then $\langle g\rangle = \tfrac{1}{2\pi}\int_0^{2\pi}\Phi_\theta^*g\,d\theta$ is a Riemannian metric that satisfies $\mathcal{L}_{R}\langle g\rangle  = 0$. Therefore our assumption introduces no loss of generality.

Suppose $m\in Z$ and let $\exp_m:T_mM\rightarrow M$ be the Riemannian exponential map at $m$. Since $g$ is $U(1)$-invariant, the geodesic flow on $TM$ commutes with $T\Phi_\theta$. It follows that the exponential map at $m$ intertwines the $U(1)$-actions $\Phi_\theta$ on $M$ and $L_\theta\equiv T_m\Phi_\theta$ on $T_mM$, i.e. $\exp_m\circ L_\theta = \Phi_\theta\circ \exp_m$ for each $\theta\in U(1)$. In light of this equivariance property and the inverse function theorem, we may therefore choose $U(1)$-invariant open subsets $U_m\subset M$ and $U_0\subset T_mM$, containing $m\in M$ and $0\in T_mM$ respectively, such that $\varphi_m = \exp_m\mid U_0:U_0\rightarrow U_m$ is a diffeomorphism. Since this diffeomorphism is equivariant, the preimage $Z_0$ of $Z_m\equiv Z\cap U_m$ under $\varphi_m$ must be equal to the fixed point set for the $U(1)$ action $L_\theta$ on $U_0$. Since $L_\theta$ is linear, $Z_0$ must be a linear subspace of $T_mM$. By restricting $\varphi_m^{-1}$ to $Z_0$, we therefore obtain a coordinate chart on $Z$ near $m$. Since $m\in Z$ is arbitrary, this shows that $Z$ is an embedded submanifold.
\end{proof}

\begin{lemma}\label{fixed_points_are_ims}
If $Y_\epsilon$ is a smooth $\epsilon$-dependent vector field on $M$ that commutes with the infinitesimal generator $R$ of a $U(1)$-action then the set of fixed points $Z$ for $R$ is an invariant submanifold for $Y_\epsilon$, for each $\epsilon$. 
\end{lemma}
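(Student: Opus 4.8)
The plan is to reduce the claim to the infinitesimal statement that $Y_\epsilon$ is everywhere tangent to $Z$, and then extract that tangency from the commutation of the relevant flows. We already know from Lemma \ref{S0_is_submanifold} that $Z$ is an embedded (hence, being a fixed-point set, closed) submanifold, so ``$Z$ is an invariant submanifold for $Y_\epsilon$'' should be read as: every integral curve of $Y_\epsilon$ that meets $Z$ remains in $Z$ throughout its maximal interval of existence. By closedness of $Z$ this is equivalent to the pointwise condition $Y_\epsilon(m)\in T_mZ$ for all $m\in Z$, but in fact the flow argument below gives the curve-level statement directly.

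First I would fix $\epsilon$ and let $\psi^t$ denote the (possibly only locally defined) flow of $Y_\epsilon$, while $\Phi_\theta$ denotes the $U(1)$-action generated by $R$. Since $[Y_\epsilon,R]=0$ and the flow $\Phi_\theta$ of $R$ is complete (indeed $2\pi$-periodic in $\theta$), the standard theorem on flows of commuting vector fields gives $\Phi_\theta\circ\psi^t = \psi^t\circ\Phi_\theta$ on the domain of $\psi^t$, for every $\theta\in U(1)$. Now take $m\in Z$, so $\Phi_\theta(m)=m$ for all $\theta$. For any $t$ in the maximal existence interval of the integral curve through $m$, the commutation relation yields $\Phi_\theta(\psi^t(m)) = \psi^t(\Phi_\theta(m)) = \psi^t(m)$ for every $\theta$; thus $\psi^t(m)$ is fixed by the entire $U(1)$-action, i.e. $\psi^t(m)\in Z$. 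Hence $Z$ is invariant for $Y_\epsilon$.

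If one prefers the purely infinitesimal route, differentiating $\Phi_\theta(\psi^t(m))=\psi^t(m)$ at $t=0$ gives $T_m\Phi_\theta(Y_\epsilon(m)) = Y_\epsilon(m)$ for all $\theta$, and by the argument already used in the proof of Lemma \ref{S0_is_submanifold} the fixed subspace of the linear $U(1)$-action $\theta\mapsto T_m\Phi_\theta$ on $T_mM$ is precisely $T_mZ$; therefore $Y_\epsilon(m)\in T_mZ$. (Alternatively, working in coordinates, $0=[R,Y_\epsilon]$ evaluated at $m\in Z$ reduces, since $R(m)=0$, to $Y_\epsilon(m)\in\ker DR(m)$, and $\ker DR(m)=T_mZ$ because $DR(m)$ generates the linearized circle action.)

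The only point that requires any care is the passage from the vanishing Lie bracket to commutation of the flows, together with the bookkeeping around possible incompleteness of $Y_\epsilon$; but completeness of the flow of $R$ makes this entirely routine. I do not expect a substantive obstacle here --- the lemma is essentially the principle that fixed points of a symmetry are preserved by anything that commutes with the symmetry, made precise with attention to the submanifold structure supplied by Lemma \ref{S0_is_submanifold}.
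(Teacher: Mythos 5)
Your proof is correct and follows essentially the same route as the paper's: commute the flow of $Y_\epsilon$ with the circle action $\Phi_\theta$ to conclude that integral curves starting on $Z$ consist of fixed points, hence stay in $Z$, which gives tangency $Y_\epsilon(m)\in T_mZ$. The only cosmetic difference is that the paper handles possible incompleteness of $Y_\epsilon$ by multiplying by a $U(1)$-invariant bump function to obtain a globally defined flow, whereas you work with the local flow of $Y_\epsilon$ together with completeness of $\Phi_\theta$; both are sound.
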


\begin{proof}
Suppose $m\in Z$. We will show that the component of $Y_\epsilon$ normal to $Z$ vanishes, i.e.  $Y_\epsilon^\perp(m) = 0$. Let $w:M\rightarrow \mathbb{R}$ be a smooth $U(1)$-invariant bump function equal to $1$ near $m$ and $0$ outside of a compact set containing $m$. Let $F_t = \exp(t\,w\,Y_\epsilon)$ denote the flow map for $w\,Y$, and let $\Phi_\theta = \exp(\theta\,R)$ denote the $U(1)$-action generated by $R$. Since $[R,w\,Y_\epsilon] = 0$, we have $F_t\circ\Phi_\theta = \Phi_\theta\circ F_t$ for each $t\in\mathbb{R}, \theta\in U(1)$. Since $m$ is an equilibrium for $R$, we therefore have $F_t(m) = \Phi_\theta(F_t(m))$ for each $t,\theta$. In other words, $F_t(m)$ is a an equilibrium for $R$ for each $t$. By Lemma \ref{fixed_points_are_ims}, we then see that the parameterized curve $\gamma(t) =F_t(m) $ defines a smooth mapping from $\mathbb{R}$ into the submanifold $Z$. The curve's velocity at $t=0$ is therefore tangent to $Z$ at $\gamma(0) = F_0(m)=m$. But $d\gamma(0)/dt = w(m)\,Y_\epsilon(m) = Y_\epsilon(m)$, whence it follows that $Y_\epsilon(m)^\perp=  0$.
\end{proof}

\begin{proof}[proof of Theorem \ref{sm_existence_thm}]
First we prove that $S_0$ is a $0^{\text{th}}$-order slow manifold for $X_\epsilon$. By Lemma \ref{S0_is_submanifold}, we know $S_0$ is a submanifold. The mapping $\mathcal{S}_\epsilon^{(0)}:S_0\rightarrow M:s_0\mapsto s_0$ therefore defines a smooth embedding of $S_0$ into $M$. We claim $\mathcal{S}_0$ is a parameterized $0^{\text{th}}$-order slow manifold. To see this, note $R_0(\mathcal{S}_\epsilon^{(0)}(s_0)) = 0$ for each $s_0\in S_0$, which implies
\begin{align*}
    X_\epsilon(\mathcal{S}_\epsilon^{(0)}(s_0)) = \omega_0(\mathcal{S}_\epsilon^{(0)}(s_0))\,R_0(\mathcal{S}_\epsilon^{(0)}(s_0)) + O(\epsilon) = O(\epsilon),
\end{align*}
as claimed. It follows that $S_0 = \mathcal{S}_\epsilon^{(0)}(S_0)$ is a $0^{\text{th}}$-order slow manifold.

Next we prove the existence of higher-order slow manifolds. Fix an integer $N>0$ and a codimension-$0$ compact submanifold $C_0\subset M$, with or without boundary. By Prop. \ref{existence_of_normalizing_transformation}, there exists a normalizing transformation $\exp(\mathcal{L}_{G_\epsilon})$ for $X_\epsilon$ with generator $G_\epsilon$. We would like to construct a diffeomorphism $\Psi_\epsilon:M\rightarrow M$ that agrees with the formal diffeomorphism $\exp(G_\epsilon)$ to $N^{\text{th}}$-order, at least on $\text{int}\,C_0$. The simplest strategy for this construction would be to set $\Psi_\epsilon = \exp(\sum_{k=1}^N \epsilon^k\,G_k)$, but, when $M$ is non-compact, integral curves of the vector field $G_\epsilon^{(N)}=\sum_{k=1}^N \epsilon^k\,G_k$ may not exist for all time, and so the exponential $\exp(G_\epsilon^{(N)})$ may not exist either. To overcome this difficulty, we introduce the vector field $\mathcal{G}_\epsilon^{(N)} = w\,G_\epsilon^{(N)}$, where $w:M\rightarrow\mathbb{R}$ is a smooth function defined as follows. If $\partial C_0 = \emptyset$, so that $C_0$ is a union of connected components of $M$, $w=1$ on $C_0$, and $w=0$ otherwise. If $\partial C_0\neq \emptyset$, then we use the tubular neighborhood theorem to construct an increasing sequence of compact, codimension-$0$ submanifolds with boundaries $C_0\subset  C_0^{\prime}\subset C_0^{\prime\prime}$ such that $C_0\subset \text{int}\, C_0^{\prime}$ and $ C_0^{\prime}\subset \text{int}\,C_0^{\prime\prime}$, and define $w$ so that it satisfies $w=1$ in $ C_0^{\prime}$ and $w=0$ outside of $C_0^{\prime\prime}$. Thus, $w$, and therefore $\mathcal{G}_\epsilon^{(N)}$, have compact support. Since smooth vector fields with compact support have well-defined flow maps, we thereby obtain a smooth diffeomorphism $\Psi_\epsilon = \exp(\mathcal{G}_\epsilon^{(N)})$. This diffeomorphism agreees with $\exp(G_\epsilon)$ to $N^{\text{th}}$-order on $\text{int}\, C_0^{\prime}$ in the following sense. If $\bm{T}_\epsilon$ is any smooth $\epsilon$-dependent tensor field on $M$ then
\begin{align}
    \forall m\in \text{int}\, C_0^{\prime},\quad(\Psi_\epsilon^*\bm{T}_\epsilon)(m) &=\left(\bm{T}_\epsilon+\mathcal{L}_{\mathcal{G}_\epsilon^{(N)}}\bm{T}_\epsilon + \frac{1}{2}\mathcal{L}_{\mathcal{G}_\epsilon^{(N)}}^2\bm{T}_\epsilon + \dots\right)(m)\nonumber\\
    &=\left(\bm{T}_\epsilon+\mathcal{L}_{G_\epsilon^{(N)}}\bm{T}_\epsilon + \frac{1}{2}\mathcal{L}_{G_\epsilon^{(N)}}^2\bm{T}_\epsilon + \dots\right)(m)\nonumber\\
    & = \left(\bm{T}_\epsilon+\mathcal{L}_{G_\epsilon}\bm{T}_\epsilon + \frac{1}{2}\mathcal{L}_{G_\epsilon}^2\bm{T}_\epsilon + \dots\right)(m)+O(\epsilon^{N+1})\nonumber\\
    & = \left(\exp(\mathcal{L}_{G_\epsilon})\bm{T}_\epsilon\right)(m)+O(\epsilon^{N+1}),\label{Phi_approximates_nt}
\end{align}
in the sense of formal power series. Here we have used that $w=1$ on an open neighborhood of any $m\in\text{int}\,C_0^\prime $ to replace the Lie derivatives $\mathcal{L}_{\mathcal{G}_\epsilon^{(N)}}$ with $\mathcal{L}_{G_\epsilon^{(N)}}$.

Using the diffeomorphism $\Psi_\epsilon$, we now construct a vector field $X_\epsilon^{(N)}$ such that (a) $X_\epsilon^{(N)} = X_\epsilon + O(\epsilon^{N+1})$ in $\text{int}\,C_0^\prime $, and (b) $X_\epsilon^{(N)}$ admits an exact parameterized invariant manifold $\mathfrak{S}_\epsilon^{(N)}:S_0\rightarrow M$. Let $\overline{X}_\epsilon = \exp(-\mathcal{L}_{G_\epsilon})X_\epsilon = \overline{X}_0 + \epsilon\,\overline{X}_1+\epsilon^2\,\overline{X}_2 + \dots$, and set $\overline{X}_\epsilon^{(N)} = \sum_{k=1}^{N}\epsilon^{k}\,\overline{X}_k$. Since $\exp(-\mathcal{L}_{G_\epsilon})R_\epsilon = R_0$, where $R_\epsilon$ is the roto-rate for $X_\epsilon$, and $[R_\epsilon,X_\epsilon] = 0$ to all orders, each of the $\overline{X}_k$, and therefore $\overline{X}_\epsilon^{(N)}$, commutes with $R_0$. By Lemma \ref{fixed_points_are_ims}, it follows that the set of equilibrium points for $R_0$, i.e. $S_0$, is an invariant manifold for $\overline{X}_\epsilon^{(N)}$. We claim that the vector field $X_\epsilon^{(N)} = \Psi_\epsilon^*\overline{X}_\epsilon^{(N)}$ satisfies properties (a) and (b)
above. For (a), we use Eq.\,\eqref{Phi_approximates_nt}  to obtain
\begin{align*}
    \forall m\in\text{int}\,C_0^\prime\,,\quad X_\epsilon^{(N)}(m) &= (\exp(\mathcal{L}_{G_\epsilon})\overline{X}_\epsilon^{(N)})(m)+O(\epsilon^{N+1})\nonumber\\
    & = (\exp(\mathcal{L}_{G_\epsilon})\overline{X}_\epsilon)(m)+O(\epsilon^{N+1})\nonumber\\
    & = (\exp(\mathcal{L}_{G_\epsilon})\exp(-\mathcal{L}_{G_\epsilon})X_\epsilon)(m)+O(\epsilon^{N+1})\nonumber\\
    & = X_\epsilon(m) + O(\epsilon^{N+1}),
\end{align*}
in the sense of formal power series. For (b), we restrict the inverse of $\Psi_\epsilon$ to $S_0$ to obtain the embedding $\mathfrak{S}_\epsilon^{(N)} = \Psi_\epsilon^{-1}\mid S_0:S_0\rightarrow M$; since $S_0$ is an invariant manifold for $\overline{X}_\epsilon^{(N)}$, $\mathfrak{S}_\epsilon^{(N)}(S_0)$ is an invariant manifold for $X_\epsilon^{(N)}$.

To complete the proof, we first use $(X_\epsilon^{(N)})^\perp = 0$ along $\mathfrak{S}_\epsilon^{(N)}$ and $\mathfrak{S}_\epsilon^{(N)}(S_0\cap \text{int}\,C_0)\subset C_0^\prime$ for sufficiently-small $\epsilon$ to obtain
\begin{align}
    \forall s_0\in S_0\cap\text{int}\,C_0\,,\quad |X_\epsilon^\perp(\mathfrak{S}_\epsilon^{(N)}(s_0))| &= |(X_\epsilon^{(N)})^{\perp}(\mathfrak{S}_\epsilon^{(N)}(s_0))|+O(\epsilon^{N+1})\nonumber\\
    & = 0+O(\epsilon^{N+1}).\label{small_perp}
\end{align}
Then we note that $X_\epsilon^{(N)} = X_0 +O(\epsilon) = \omega_0\,R_0+O(\epsilon)$ since $\Psi_\epsilon$ is near-identity, which implies in particular
\begin{align}
    \forall s_0\in S_0\cap\text{int}\,C_0\,,\quad |(X_\epsilon^{(N)})^\parallel(\mathfrak{S}_\epsilon^{(N)}(s_0))| &=O(\epsilon)  .\label{small_parallel}
\end{align}
Equations \,\eqref{small_perp}-\eqref{small_parallel} say $\mathcal{S}_\epsilon^{(N)} = \mathfrak{S}_\epsilon^{(N)}\mid S_0\cap \text{int}\,C_0$ is an $N^{\text{th}}$-order parameterized slow manifold.
\end{proof}

\section{Normal stability in nearly-periodic Hamiltonian systems}
Given an invariant manifold, or more generally an almost invariant manifold like those provided by Theorem \ref{sm_existence_thm}, it is important to understand the stability of nearby trajectories. In other words, if a trajectory begins near such an object in phase space, then how long will it remain nearby? The answer to this question sets limits on model reduction strategies based on projecting to the (almost) invariant object. When all nearby trajectories remain nearby on some time interval $\mathcal{I}$, projecting should provide a reasonable reduced model for dynamics near the manifold for times in $\mathcal{I}$. We say that the manifold is \emph{normally-stable} on $\mathcal{I}$.  However, after a trajectory's transverse displacement becomes large, the projected dynamics may have very little to do with the true dynamics. The purpose of this Section is to establish a useful tool for establishing normal stability of slow manifolds in nearly-periodic systems that admit a particular kind of Hamiltonian structure.

A nearly-periodic system $X_\epsilon$ can exhibit a Hamiltonian structure in various ways. In certain cases, $X_\epsilon$ may be Hamiltonian with respect to an $\epsilon$-independent symplectic form $\Omega$ on $M$, meaning there is a smooth $\epsilon$-dependent function $H_\epsilon:M\rightarrow\mathbb{R}$ such that $\iota_{{X}_\epsilon}\Omega = \mathbf{d}H_\epsilon$. Examples include $2$-degree-of-freedom canonical Hamiltonian systems with Hamiltonians of the form $H(q_1,p_1,q_2,p_2) = \tfrac{1}{2}(q_1^2+p_1^2) +\epsilon\,U(q_1,p_1,q_2,p_2)$, where $U$ is any smooth function. More generally, $X_\epsilon$ may be Hamiltonian with respect to a smooth $\epsilon$-dependent symplectic form $\Omega_\epsilon$ whose singular limit $\Omega_0$ is merely pre-symplectic. In other words, $\Omega_0$ is closed, but may be degenerate as a $2$-form. We call such $\epsilon$-dependent $2$-forms \emph{barely-symplectic forms}. The Lorentz force equations describing the motion of a charged particle in a strong magnetic field exhibit a barely-symplectic structure. Since the Hamiltonian structure for a single charged particle frequently appears in the infinite-dimensional Hamiltonian structures underlying non-dissipative models of plasma dynamics (CITE), barely-symplectic Hamiltonian structures play an important role in plasma physics.

Motivated by the above remarks, we will focus our attention on normal stability of slow manifolds arising in nearly-periodic Hamiltonian system on barely-symplectic manifolds. (Defined below.) Since our methods involve perturbation theory, we will focus in particular on barely-symplectic forms that exhibit at worst a \emph{regular singularity} as $\epsilon\rightarrow 0$. The class of barely-symplectic forms with regular singularities at $\epsilon=0$ appears to be broad enough to cover many significant applications, including those discussed in Section \ref{gc_application_section}.

\subsection{Barely-symplectic manifolds\label{bsm_section}}
Regular barely-symplectic manifolds provide the differential-geometric setting for our results on slow manifold normal stability. The purpose of this subsection is to precisely define and outline the basic properties of these manifolds. The notion of barely-symplectic manifold should be contrasted with the relatetd notion of \emph{folded symplectic manifold} introduced in \cite{Silva_2000}. Rather than exhibiting singularities as a parameter tends to a limiting value, folded symplectic forms degenerate on hypersurfaces in phase space.

\begin{definition}
A \textbf{barely-symplectic form} on a manifold $M$ is a smooth $\epsilon$-dependent $2$-form $\Omega_\epsilon$ such that 
\begin{itemize}
    \item $\mathbf{d}\Omega_\epsilon = 0$ for each $\epsilon\in\mathbb{R}$
    \item $\Omega_\epsilon$ is symplectic whenever $\epsilon\neq 0$
\end{itemize}
The barely-symplectic form $\Omega_\epsilon$ is \textbf{exact} if $\Omega_\epsilon = -\mathbf{d}\vartheta_\epsilon$ for some smooth $\epsilon$-dependent $1$-form $\vartheta_\epsilon$. A(n) (\textbf{exact}) \textbf{barely-symplectic manifold} is a pair $(M,\Omega_\epsilon)$, where $M$ is a manifold and $\Omega_\epsilon$ is a(n) (exact) barely-symplectic form on $M$. A \textbf{Hamiltonian system} on a barely-symplectic manifold is a smooth $\epsilon$-dependent vector field $X_\epsilon$ such that $\iota_{X_\epsilon}\Omega_\epsilon = \mathbf{d}H_\epsilon$, for some smooth $\epsilon$-dependent function $H_\epsilon$, referred to as the system's \textbf{Hamiltonian}.
\end{definition}

Since a barely-symplectic form is non-degenerate for non-zero $\epsilon$, each barely-symplectic form induces an $\epsilon$-dependent Poisson structure with a possible singularity at $\epsilon = 0$. We may therefore classify barely-symplectic forms according to the severity of that singularity.

\begin{definition}
Let $\Omega_\epsilon$ be a barely-symplectic form and let $\mathcal{J}_{\epsilon}$ be the $\epsilon$-dependent Poisson bivector defined for $\epsilon\neq 0$ by inverting $\Omega_\epsilon$. We say $\Omega_\epsilon$ is \textbf{regular} if there is some non-negative integer $d$ such that $\mathcal{J}_\epsilon = \epsilon^{-d}j_\epsilon$, where $j_\epsilon$ is a smooth $\epsilon$-dependent bivector. The smallest $d$ such that $\mathcal{J}_\epsilon = \epsilon^{-d}j_\epsilon$ is called the \textbf{degeneracy index} of $\Omega_\epsilon$. When no such $d$ exists, $\Omega_\epsilon$ is \textbf{irregular}.
\end{definition}

\begin{example}
Let $(M_1,\Omega_1)$, $(M_2,\Omega_2)$ be a pair of symplectic manifolds, and let $f_1(\epsilon),f_2(\epsilon)$ be a pair of smooth functions, each with at most an isolated zero at $\epsilon=0$. If we equip the product manifold $M = M_1\times M_2$ with the $\epsilon$-dependent $2$-form $\Omega_\epsilon = f_1(\epsilon)\,\pi_1^*\Omega_1+f_2(\epsilon)\,\pi_2^*\Omega_2$, where $\pi_k:M\rightarrow M_k$ denotes projection onto the $k^{\text{th}}$ factor, we obtain a barely-symplectic manifold $(M,\Omega_\epsilon)$. 

When $f_1(\epsilon) = \epsilon^n$ and $f_2(\epsilon) = \epsilon^m$ with $n\geq m\geq 0$, $\Omega_\epsilon$ is a regular barely-symplectic form. Since the Poisson bivector associated with $\Omega_\epsilon$ is given by $\mathcal{J}_\epsilon = \epsilon^{-n}\mathcal{J}_1 + \epsilon^{-m}\mathcal{J}_2 = \epsilon^{-n}(\mathcal{J}_1 + \epsilon^{n-m}\mathcal{J}_2)$, the degeneracy index of $\Omega_\epsilon$ is $d = n$. To obtain an irregular barely-symplectic form, we may set $f_1(\epsilon) = \exp(-1/\epsilon^2)$, $f_2(\epsilon) = 1$. Then $\mathcal{J}_\epsilon = \exp(1/\epsilon^2)\,\mathcal{J}_1 + \mathcal{J}_2 = \exp(1/\epsilon^2)(\mathcal{J}_1+\exp(-1/\epsilon^2)\mathcal{J}_2)$. Although $\mathcal{J}_\epsilon = j_\epsilon/f_1(\epsilon)$ with $j_\epsilon$ smooth, $1/f_1(\epsilon)$ tends to $\infty$ as $\epsilon\rightarrow 0$ so quickly that the singularity cannot be tamed by any power of $\epsilon$.
\end{example}

Although a barely-symplectic form $\Omega_\epsilon$ may fail to be symplectic when $\epsilon=0$, regular $\Omega_\epsilon$ with degeneracy index $d$ nevertheless behave much like ordinary, $\epsilon$-independent symplectic forms. Heuristically, one just needs to include the first $d+1$ terms in the power series expansion $\Omega_\epsilon = \Omega_0+\epsilon\,\Omega_1 + \dots$, rather than $\Omega_0$ by itself, to employ to symplectic methods. As an illustration of this heuristic, we have the following version of Darboux's theorem for regular barely-symplectic manifolds.

\begin{proposition}[Darboux theorem for regular barely-symplectic manifolds]\label{barely_symplectic_darboux}
Let $(M,\Omega_\epsilon)$ be a compact, regular, barely-symplectic manifold with degeneracy index $d$. Assume the de Rham cohomology class $[\Omega_\epsilon]$ defined by the barely-symplectic form satisfies $[\Omega_\epsilon] = [\Omega_0]$ for each $\epsilon$, and set $\Omega_\epsilon^{(d)} = \sum_{k=0}^{d}\epsilon^k\,\Omega_k$, where $\Omega_k$ is the $k^{\text{th}}$ coefficient in the power series expansion $\Omega_\epsilon = \Omega_0 + \epsilon\,\Omega_1 + \dots$. There exists an $\epsilon_0>0$ and a smooth $\epsilon$-dependent diffeomorphism $\Psi_\epsilon:M\rightarrow M$, $\epsilon\in[-\epsilon_0,\epsilon_0]$, such that $\Psi_\epsilon^*\Omega_\epsilon = \Omega_\epsilon^{(d)}$.

\end{proposition}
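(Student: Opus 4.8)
The plan is to imitate Moser's classical path method (the ``Moser trick'') for the Darboux theorem, adapted to track the $\epsilon$-dependence carefully. Fix the parameter $\epsilon$ (thinking of it as small) and interpolate between $\Omega_\epsilon^{(d)}$ and $\Omega_\epsilon$ by the family of $2$-forms $\Omega_{\epsilon,t} = \Omega_\epsilon^{(d)} + t\,(\Omega_\epsilon - \Omega_\epsilon^{(d)})$ for $t\in[0,1]$. First I would record the elementary fact that $\Omega_\epsilon - \Omega_\epsilon^{(d)} = O(\epsilon^{d+1})$ as a smooth $\epsilon$-dependent $2$-form, by Taylor's theorem with remainder. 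Since $\Omega_\epsilon$ is regular with degeneracy index $d$, we have $\widehat{\Omega}_\epsilon^{-1} = -\widehat{\mathcal{J}}_\epsilon = -\epsilon^{-d}\widehat{j}_\epsilon$ with $j_\epsilon$ a smooth bivector; hence $\widehat{\Omega}_\epsilon$ is invertible with inverse of size $O(\epsilon^{-d})$. The key quantitative point is that $\widehat{\Omega}_{\epsilon,t} = \widehat{\Omega}_\epsilon^{(d)} + t\,(\widehat{\Omega}_\epsilon - \widehat{\Omega}_\epsilon^{(d)})$ differs from $\widehat{\Omega}_\epsilon$ by $O(\epsilon^{d+1})$, which is one power of $\epsilon$ smaller than the $O(\epsilon^{-d})$ size of $\widehat{\Omega}_\epsilon^{-1}$ only if $d+1 > d$ — which is always true — so by a Neumann-series argument $\widehat{\Omega}_{\epsilon,t}$ is invertible for all $t\in[0,1]$ once $|\epsilon|\le\epsilon_0$ with $\epsilon_0$ small enough (uniformly in $t$ and, using compactness of $M$, uniformly in the base point). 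Thus each $\Omega_{\epsilon,t}$ is a genuine symplectic form for $0<|\epsilon|\le\epsilon_0$; it is also closed at $\epsilon=0$, which is all we need for the homotopy argument.

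Next I would produce the interpolating vector field. Because $[\Omega_\epsilon] = [\Omega_0] = [\Omega_\epsilon^{(d)}]$ in de Rham cohomology (the second equality holds because $\Omega_\epsilon^{(d)}$ and $\Omega_\epsilon$ agree to order $d$ and differ by an exact form — or one simply notes $\Omega_\epsilon - \Omega_\epsilon^{(d)}$ is exact since $[\Omega_\epsilon]$ is $\epsilon$-independent and $\Omega_\epsilon^{(d)}$ is a partial Taylor sum of $\Omega_\epsilon$), we may choose a smooth $\epsilon$-dependent $1$-form $\beta_\epsilon$ with $\mathbf{d}\beta_\epsilon = \Omega_\epsilon - \Omega_\epsilon^{(d)}$. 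The standard construction of a primitive (e.g. via a chain homotopy operator coming from a fixed finite good cover, or via Hodge theory with the auxiliary metric $g$ on compact $M$) can be made to depend smoothly on $\epsilon$ and, crucially, to satisfy $\beta_\epsilon = O(\epsilon^{d+1})$ since it is applied to a $2$-form that is $O(\epsilon^{d+1})$. Then define the time-dependent vector field $Y_{\epsilon,t}$ by $\iota_{Y_{\epsilon,t}}\Omega_{\epsilon,t} = -\beta_\epsilon$, i.e. $Y_{\epsilon,t} = -\widehat{\Omega}_{\epsilon,t}^{-1}\beta_\epsilon$. The size estimate is the heart of the matter: $\|\widehat{\Omega}_{\epsilon,t}^{-1}\| = O(\epsilon^{-d})$ while $\|\beta_\epsilon\| = O(\epsilon^{d+1})$, so $\|Y_{\epsilon,t}\| = O(\epsilon)$, uniformly in $t$ and in the base point by compactness. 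In particular $Y_{\epsilon,t}$ extends smoothly (by zero) to $\epsilon=0$ and its flow exists for all $t\in[0,1]$ once $|\epsilon|\le\epsilon_0$, since $M$ is compact and the vector field is uniformly bounded.

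Let $\Psi_{\epsilon,t}$ denote the flow of $Y_{\epsilon,t}$ from time $0$ to time $t$, and set $\Psi_\epsilon = \Psi_{\epsilon,1}$. The Moser computation gives
\begin{align*}
\frac{d}{dt}\Psi_{\epsilon,t}^*\Omega_{\epsilon,t} = \Psi_{\epsilon,t}^*\!\left(\mathcal{L}_{Y_{\epsilon,t}}\Omega_{\epsilon,t} + \frac{\partial}{\partial t}\Omega_{\epsilon,t}\right) = \Psi_{\epsilon,t}^*\!\left(\mathbf{d}\iota_{Y_{\epsilon,t}}\Omega_{\epsilon,t} + (\Omega_\epsilon - \Omega_\epsilon^{(d)})\right) = \Psi_{\epsilon,t}^*\!\left(-\mathbf{d}\beta_\epsilon + \mathbf{d}\beta_\epsilon\right) = 0,
\end{align*}
using Cartan's formula and closedness of $\Omega_{\epsilon,t}$. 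Hence $\Psi_{\epsilon,1}^*\Omega_{\epsilon,1} = \Psi_{\epsilon,0}^*\Omega_{\epsilon,0}$, i.e. $\Psi_\epsilon^*\Omega_\epsilon = \Omega_\epsilon^{(d)}$, for $0<|\epsilon|\le\epsilon_0$. Smoothness of $\Psi_\epsilon$ in $\epsilon$ at $\epsilon=0$ follows from smoothness of $Y_{\epsilon,t}$ down to $\epsilon=0$ and smooth dependence of flows on parameters; and at $\epsilon=0$ the identity $\Psi_0^*\Omega_0 = \Omega_0^{(d)}$ holds by continuity (both sides equal $\Omega_0$ since $Y_{0,t}\equiv 0$ forces $\Psi_0 = \mathrm{id}$, and $\Omega_0^{(d)}|_{\epsilon=0} = \Omega_0$). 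This completes the argument. The main obstacle is the quantitative bookkeeping: one must verify that the primitive $\beta_\epsilon$ can be chosen both smooth in $\epsilon$ and of order $\epsilon^{d+1}$, and that the Neumann-series inversion of $\widehat{\Omega}_{\epsilon,t}$ is uniform in $t$ and in the base point — compactness of $M$ is what makes all these estimates uniform and is the reason it appears in the hypotheses. If one wanted to avoid invoking a chain-homotopy/Hodge construction of $\beta_\epsilon$ with explicit estimates, an alternative is to write $\beta_\epsilon = \int_0^1 (\text{homotopy operator applied to } \partial_s\Omega_{s}) \, ds$-type formulas, but the cleanest route is the Hodge-theoretic one on the compact Riemannian manifold $(M,g)$, where $\beta_\epsilon = \mathbf{d}^*G(\Omega_\epsilon - \Omega_\epsilon^{(d)})$ with $G$ the Green's operator manifestly depends smoothly on $\epsilon$ and is bounded, so inherits the $O(\epsilon^{d+1})$ estimate directly.
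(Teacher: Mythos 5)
Your proof is correct and follows essentially the same route as the paper: a Moser path between $\Omega_\epsilon$ and $\Omega_\epsilon^{(d)}$, invertibility of the interpolated forms obtained from regularity plus compactness of $M$, and the key cancellation that an $O(\epsilon^{d+1})$ primitive paired with an $O(\epsilon^{-d})$ inverse yields a generating vector field that is smooth down to $\epsilon=0$, so the flow exists and the usual Moser computation closes the argument. The only real difference is in how the primitive of $\Omega_\epsilon-\Omega_\epsilon^{(d)}$ is produced: the paper Taylor-truncates a smooth primitive $\vartheta_\epsilon$ of $\Omega_0-\Omega_\epsilon$ (so exactness of the difference and the $O(\epsilon^{d+1})$ bound come for free), whereas your Hodge-theoretic choice $\mathbf{d}^*G(\Omega_\epsilon-\Omega_\epsilon^{(d)})$ works too but strictly requires first checking that the Taylor coefficients $\Omega_k$, $k\ge 1$, are themselves exact (e.g.\ because the harmonic projection is a continuous linear operator annihilating $\Omega_\epsilon-\Omega_0$ for every $\epsilon$, hence annihilating each $\epsilon$-derivative at $0$), a point your justification passes over somewhat quickly.
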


\begin{proof}
Define $\Omega_\epsilon^\lambda = [1-\lambda]\,\Omega_\epsilon + \lambda\,\Omega_\epsilon^{(d)}$, $\lambda\in[0,1]$. We would like to establish non-degeneracy of $\Omega_\epsilon^\lambda$ for sufficiently-small $\epsilon$. Notice that $\Omega_\epsilon^\lambda =  \Omega_\epsilon +\lambda[\Omega_\epsilon^{(d)}-\Omega_\epsilon] = \Omega_\epsilon + O(\lambda\epsilon^{d+1})$. Therefore, if $\mathcal{J}_\epsilon = \epsilon^{-d}j_\epsilon$ denotes the Poisson bivector induced by $\Omega_\epsilon$, we have
\begin{align*}
    -\widehat{\mathcal{J}}_\epsilon\,\widehat{\Omega}_\epsilon^\lambda = \text{id}_{TM} + \lambda\,\epsilon\,\widehat{\psi}_\epsilon,
\end{align*}
where $\epsilon\,\widehat{\psi}_\epsilon = -\epsilon^{-d}\widehat{j}_\epsilon [\widehat{\Omega}_\epsilon^{(d)} - \widehat{\Omega}_\epsilon]$ is a smooth $\epsilon$-dependent bundle map $TM\rightarrow TM$. By openness of the set of invertible matrices and compactness of $M$, there is therefore some $\epsilon_0>0$ such that $\text{id}_{TM} + \lambda\,\epsilon\,\psi_\epsilon$ is invertible for $\epsilon\in [-\epsilon_0,\epsilon_0]$, with smooth $\epsilon$-dependent inverse. This implies $\widehat{\Omega}_\epsilon^\lambda$ is invertible for non-zero $\epsilon\in[-\epsilon_0,\epsilon_0]$ with inverse given by $(\widehat{\Omega}_\epsilon^\lambda)^{-1} =-\widehat{\chi}_\epsilon^\lambda\widehat{\mathcal{J}}_\epsilon  = -\epsilon^{-d}\widehat{\chi}_\epsilon^\lambda\,\widehat{j}_\epsilon$, where $\widehat{\chi}_\epsilon^\lambda = [\text{id}_{TM} + \lambda\,\epsilon\,\widehat{\psi}_\epsilon]^{-1}$.

Next we will derive a useful formula for the difference $\Omega_\epsilon^{(d)} - \Omega_\epsilon$. By the cohomological condition $[\Omega_\epsilon] = [\Omega_0]$, we have $[\Omega_\epsilon - \Omega_0] = 0$, which implies that there is a smooth $\epsilon$-dependent $1$-form $\vartheta_\epsilon$ such that $\Omega_\epsilon = \Omega_0 -\mathbf{d}\vartheta_\epsilon$. Let $\vartheta_\epsilon = \vartheta_0 + \epsilon\,\vartheta_1 + \dots$ be that form's formal power series expansion, and set $\vartheta_\epsilon^{(d)} = \sum_{k=0}^{d}\epsilon^k\,\vartheta_k$. By equality of Taylor series, we have $\mathbf{d}\vartheta_0 = 0$ and $\Omega_\epsilon^{(d)} = \Omega_0 -\mathbf{d}\vartheta_\epsilon^{(d)}$. In particular we have the useful identity 
\begin{align}
\Omega_\epsilon^{(d)} - \Omega_\epsilon = \Omega_0 - \mathbf{d}\vartheta_\epsilon^{(d)} - \Omega_\epsilon = \mathbf{d}(\vartheta_\epsilon -\vartheta_\epsilon^{(d)}).\label{ch_cons}
\end{align}

Finally, we will construct the diffeomorphism $\Psi_\epsilon$. For $\epsilon\in[-\epsilon_0,\epsilon_0]$, define $G_\epsilon^\lambda = (\widehat{\Omega}_\epsilon^{\lambda})^{-1}\,(\vartheta_\epsilon^{(d)}-\vartheta_\epsilon)$. Since $\vartheta_\epsilon^{(d)}-\vartheta_\epsilon = O(\epsilon^{d+1})$ and $(\Omega_\epsilon^{\lambda})^{-1}=O(\epsilon^{-d})$, $G_\epsilon^\lambda$ depends smoothly on both $\epsilon$ and $\lambda$. If $F_\lambda^\epsilon$ denotes the $t=\lambda$ flow map for $G_\epsilon^\lambda$, we therefore have
\begin{align}
    \frac{d}{d\lambda}(F_\lambda^\epsilon)^* \Omega_\epsilon^\lambda &= (F_\lambda^\epsilon)^*\mathbf{d}\left(\iota_{G_\epsilon^\lambda}\Omega_\epsilon^\lambda + \vartheta_\epsilon- \vartheta_\epsilon^{(d)}\right)\nonumber\\
    & = (F_\lambda^\epsilon)^*(\widehat{\Omega}_\epsilon^\lambda\,G_\epsilon^\lambda + \vartheta_\epsilon- \vartheta_\epsilon^{(d)})\nonumber\\
    & =0,\nonumber
\end{align}
where we used the formula \eqref{ch_cons} on the first line.
This proves the theorem with $\Psi_\epsilon = (F^\epsilon_1)^{-1}$ since it implies $(F_1^\epsilon)^*\Omega_\epsilon^{(d)} = \Omega_\epsilon$.

\end{proof}

While we will not use the barely-symplectic Darboux theorem directly in this Article, it will be useful in what follows to distill the theorem's essential ingredient into the following Lemma.
\begin{lemma}\label{Darboux_lemma}
Let $M$ be a smooth manifold, and let $j_\epsilon$ and $\Omega_\epsilon$ be formal power series with bivector and $2$-form coefficients, respectively. Assume there exists a non-negative integer $d$ such that $-\widehat{j}_\epsilon\,\widehat{\Omega}_\epsilon = \epsilon^{d}\,\mathrm{id}_{TM}$, in the sense of formal power series. Then the smooth $\epsilon$-dependent $2$-form $\Omega_\epsilon^{(d)} = \sum_{k=0}^d\epsilon^{k}\,\Omega_k$ has the following properties.
\begin{itemize}
    \item[(1)] Given a compact set $C\subset M$ there exists an $\epsilon_0>0$ such that, for each non-zero $\epsilon\in[-\epsilon_0,\epsilon_0]$, $\Omega_\epsilon^{(d)}$ is non-degenerate on $C$. Moreover, $(\widehat{\Omega}_\epsilon^{(d)})^{-1} = -\epsilon^{-d}\,\widehat{\kappa}_\epsilon$, where $\kappa_\epsilon$ is a $0^{\mathrm{th}}$-order approximation of $j_\epsilon$.
    \item[(2)] There exists a formal power series $\kappa_\epsilon$ with bivector coefficients such that $-\epsilon^{-d}\widehat{\kappa}_\epsilon$ is a formal inverse for $\widehat{\Omega}_\epsilon^{(d)}$.
\end{itemize}
\end{lemma}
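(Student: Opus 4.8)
The plan is to prove the two statements essentially by formal power series manipulation, extracting the needed facts from the hypothesis $-\widehat{j}_\epsilon\,\widehat{\Omega}_\epsilon = \epsilon^d\,\mathrm{id}_{TM}$ and then feeding them into a compactness argument for statement (1).

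First I would establish statement (2), the purely formal claim, since statement (1) will depend on it. Write $\Omega_\epsilon = \Omega_\epsilon^{(d)} + \epsilon^{d+1}\,\Xi_\epsilon$ where $\Xi_\epsilon$ is a formal power series of $2$-form coefficients (this just splits off the first $d+1$ terms). Then $\widehat{\Omega}_\epsilon = \widehat{\Omega}_\epsilon^{(d)} + \epsilon^{d+1}\,\widehat{\Xi}_\epsilon$, so the hypothesis reads $-\widehat{j}_\epsilon\,\widehat{\Omega}_\epsilon^{(d)} = \epsilon^d\,\mathrm{id}_{TM} + \epsilon^{d+1}\,\widehat{j}_\epsilon\,\widehat{\Xi}_\epsilon = \epsilon^d(\mathrm{id}_{TM} + \epsilon\,\widehat{j}_\epsilon\,\widehat{\Xi}_\epsilon)$. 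The operator $\mathrm{id}_{TM} + \epsilon\,\widehat{j}_\epsilon\,\widehat{\Xi}_\epsilon$ is a formal power series bundle map with leading term $\mathrm{id}_{TM}$, hence it admits a formal inverse $\widehat{\chi}_\epsilon = \sum_{k\geq 0}(-\epsilon\,\widehat{j}_\epsilon\,\widehat{\Xi}_\epsilon)^k$ (the Neumann series, which is a well-defined formal power series because each coefficient is a finite sum). Setting $\widehat{\kappa}_\epsilon = \widehat{\chi}_\epsilon\,\widehat{j}_\epsilon$ — equivalently defining the bivector $\kappa_\epsilon$ by $\widehat{\kappa}_\epsilon = \widehat{\chi}_\epsilon\,\widehat{j}_\epsilon$, noting $\chi_\epsilon = \mathrm{id}_{TM}+O(\epsilon)$ so $\kappa_\epsilon = j_\epsilon + O(\epsilon)$ is a genuine bivector-valued power series that is a $0^{\text{th}}$-order approximation of $j_\epsilon$ — we get $-\widehat{\kappa}_\epsilon\,\widehat{\Omega}_\epsilon^{(d)} = -\widehat{\chi}_\epsilon\,\widehat{j}_\epsilon\,\widehat{\Omega}_\epsilon^{(d)} = \widehat{\chi}_\epsilon\,\epsilon^d(\mathrm{id}_{TM}+\epsilon\,\widehat{j}_\epsilon\,\widehat{\Xi}_\epsilon) = \epsilon^d\,\mathrm{id}_{TM}$, so $-\epsilon^{-d}\widehat{\kappa}_\epsilon$ is a formal inverse of $\widehat{\Omega}_\epsilon^{(d)}$. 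One should also check the symmetry/antisymmetry bookkeeping so that $\kappa_\epsilon$ really is a bivector (this follows because a formal inverse of an invertible antisymmetric bundle map is again antisymmetric, mirroring $\widehat{\Omega}^{-1}=-\widehat{\mathcal{J}}$).

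Next I would prove statement (1) by truncating the formal identity of statement (2) at finite order and invoking compactness, exactly as in the proof of Proposition \ref{barely_symplectic_darboux}. Fix the compact set $C$. Truncate $\kappa_\epsilon$ at order, say, $d$ to get a smooth $\epsilon$-dependent bivector $\kappa_\epsilon^{(d)}$; then statement (2) gives $-\widehat{\kappa}_\epsilon^{(d)}\,\widehat{\Omega}_\epsilon^{(d)} = \epsilon^d\,\mathrm{id}_{TM} + \epsilon^{d+1}\,\widehat{E}_\epsilon$ for some smooth $\epsilon$-dependent bundle map $E_\epsilon$, i.e. $-\epsilon^{-d}\widehat{\kappa}_\epsilon^{(d)}\,\widehat{\Omega}_\epsilon^{(d)} = \mathrm{id}_{TM} + \epsilon\,\widehat{E}_\epsilon$. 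By openness of the invertible matrices together with compactness of $C$, there is $\epsilon_0>0$ such that $\mathrm{id}_{TM}+\epsilon\,\widehat{E}_\epsilon$ is invertible over $C$ for all $\epsilon\in[-\epsilon_0,\epsilon_0]$, with smooth $\epsilon$-dependent inverse $\widehat{\chi}_\epsilon^{(d)}$. Hence for nonzero $\epsilon$ in that interval $\widehat{\Omega}_\epsilon^{(d)}$ is invertible over $C$ with $(\widehat{\Omega}_\epsilon^{(d)})^{-1} = -\epsilon^{-d}\,\widehat{\chi}_\epsilon^{(d)}\,\widehat{\kappa}_\epsilon^{(d)} = -\epsilon^{-d}\,\widehat{\kappa}_\epsilon$ in the notation of the Lemma, where $\kappa_\epsilon := \chi_\epsilon^{(d)}\,\kappa_\epsilon^{(d)}$ (up to the hat-raising identification) agrees with $j_\epsilon$ to leading order since $\chi_\epsilon^{(d)} = \mathrm{id}+O(\epsilon)$; non-degeneracy of $\Omega_\epsilon^{(d)}$ as a $2$-form over $C$ follows from invertibility of $\widehat{\Omega}_\epsilon^{(d)}$. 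This also reconciles the two appearances of the symbol $\kappa_\epsilon$ in the statement: the formal one from part (2) is the power series whose truncation yields the one in part (1).

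I expect the main obstacle to be purely organizational rather than deep: carefully distinguishing the formal-power-series claim (part 2, true over all of $M$ with no $\epsilon_0$) from the smooth-category claim (part 1, requiring truncation, compactness, and an $\epsilon_0$), and making sure the two objects both named $\kappa_\epsilon$ are consistently related. A secondary point requiring a line of care is verifying that the formal inverse of the antisymmetric $\widehat{\Omega}_\epsilon^{(d)}$ is of the form $-\epsilon^{-d}\widehat{\kappa}_\epsilon$ with $\kappa_\epsilon$ genuinely a bivector (not merely a $(2,0)$-tensor); this is immediate from uniqueness of formal inverses together with the fact that transposing the identity $-\widehat{\kappa}_\epsilon\widehat{\Omega}_\epsilon^{(d)} = \epsilon^d\,\mathrm{id}$ and using antisymmetry of $\widehat{\Omega}_\epsilon^{(d)}$ forces $\widehat{\kappa}_\epsilon$ antisymmetric as well. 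No genuinely hard analysis is involved — the content is entirely the bookkeeping that lets one trade the degenerate $\Omega_0$ for the nondegenerate-up-to-scaling $\Omega_\epsilon^{(d)}$.
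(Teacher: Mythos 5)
Your proposal is correct and follows essentially the same route as the paper's own proof: the same splitting $\Omega_\epsilon = \Omega_\epsilon^{(d)} + \epsilon^{d+1}\,\Xi_\epsilon$, the same Neumann-series construction of the formal inverse $\widehat{\kappa}_\epsilon = (\mathrm{id}_{TM}+\epsilon\,\widehat{j}_\epsilon\,\widehat{\Xi}_\epsilon)^{-1}\widehat{j}_\epsilon$ for part (2), and the same truncation-plus-compactness (Taylor remainder) argument for part (1). The only cosmetic difference is your verification that $\kappa_\epsilon$ is a genuine bivector: you transpose the identity $-\widehat{\kappa}_\epsilon\,\widehat{\Omega}_\epsilon^{(d)} = \epsilon^d\,\mathrm{id}_{TM}$ and use associativity/uniqueness of the (one-sided) inverses, while the paper computes $\widehat{\kappa}_\epsilon^* = -\widehat{\kappa}_\epsilon$ directly on the Neumann series via $\widehat{j}_\epsilon[\Delta\widehat{\Omega}_\epsilon\,\widehat{j}_\epsilon]^n = [\widehat{j}_\epsilon\,\Delta\widehat{\Omega}_\epsilon]^n\,\widehat{j}_\epsilon$; both arguments are valid.
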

\begin{remark}
The $\kappa_\epsilon$ in property (2) above is not in general equal to the $\kappa_\epsilon$ in property $(1)$; the former is merely a formal power series with smooth bivector coefficients, while the latter is a smooth $\epsilon$-dependent bivector defined on the compact set $C$. However, within $C$, the power series expansion of $\kappa_\epsilon$ from (1) agrees with $\kappa_\epsilon$ from $(2)$, which justifies the abuse of notation.
\end{remark}

\begin{proof}
Let $\Delta\Omega_\epsilon = \epsilon^{-(d+1)}\,(\Omega_\epsilon - \Omega_\epsilon^{(d)})$. By assumption, $\epsilon^d\text{id}_{TM} = -\widehat{j}_\epsilon (\widehat{\Omega}_\epsilon^{(d)} + \epsilon^{d+1}\Delta\widehat{\Omega}_\epsilon)$, which may also be written
\begin{align*}
    \epsilon^d(\text{id}_{TM}+\epsilon\,\widehat{j}_\epsilon\,\Delta\widehat{\Omega}_\epsilon) = -\widehat{j}_\epsilon \,\widehat{\Omega}_\epsilon^{(d)}.
\end{align*}
It follows that the formal power series $\widehat{\kappa}_\epsilon$ with bundle-map  coefficients ($T^*M\rightarrow TM$) defined by
\begin{align}
    \widehat{\kappa}_\epsilon& = (\text{id}_{TM} + \epsilon\,\widehat{j}_\epsilon\,\Delta\widehat{\Omega}_\epsilon)^{-1}\,\widehat{j}_\epsilon\nonumber\\
    & = (\text{id}_{TM} - \epsilon\,\widehat{j}_\epsilon\,\Delta\widehat{\Omega}_\epsilon + \epsilon^2\,[\widehat{j}_\epsilon\,\Delta\widehat{\Omega}_\epsilon]^2 + \dots)\,\widehat{j}_\epsilon\label{kappa_formula}
\end{align}
satisfies $-\widehat{\kappa}_\epsilon\,\widehat{\Omega}_\epsilon^{(d)} = \epsilon^d\,\text{id}_{TM}$, in the sense of formal power series. We will therefore establish property (2) if we can demonstrate that $\widehat{\kappa}_\epsilon$ is the bundle map associated with some formal power series $\kappa_\epsilon$ with bivector coefficients. Equivalently, we must show $\widehat{\kappa}_\epsilon^* = -\widehat{\kappa}_\epsilon$, where $\widehat{\kappa}_\epsilon^*$ denotes the dual of $\widehat{\kappa}_\epsilon$. For this we compute directly:
\begin{align*}
    \widehat{\kappa}_\epsilon^*& = -\widehat{j}_\epsilon\,(\text{id}_{T^*M} + \epsilon\,\Delta\widehat{\Omega}_\epsilon\,\widehat{j}_\epsilon)^{-1}\nonumber\\
    & = -(\text{id}_{TM} - \epsilon\,\widehat{j}_\epsilon\,\Delta\widehat{\Omega}_\epsilon + \epsilon^2\,[\widehat{j}_\epsilon\,\Delta\widehat{\Omega}_\epsilon]^2 + \dots)\widehat{j}_\epsilon\nonumber\\
    & = -\widehat{\kappa}_\epsilon,
\end{align*}
where we have used the identity $\widehat{j}_\epsilon[\Delta\widehat{\Omega}_\epsilon\,\widehat{j}_\epsilon]^n = [\widehat{j}_\epsilon\,\Delta\widehat{\Omega}_\epsilon]^n\,\widehat{j}_\epsilon$ for each non-negative integer $n$. We conclude that $\Omega_\epsilon^{(d)}$ satisfies property (2).

Next we establish property (1). Define the smooth $\epsilon$-dependent bivector $\kappa_\epsilon^{(d)} = \sum_{k=0}^d\,\epsilon^k\,\kappa_k$. The formal power series identity $-\widehat{\kappa}_\epsilon\,\widehat{\Omega}_\epsilon^{(d)} = \epsilon^d\,\text{id}_{TM}$ implies that the the Taylor expansion of the smooth $\epsilon$-dependent bundle map $-\widehat{\kappa}_\epsilon^{(d)}\,\widehat{\Omega}_\epsilon^{(d)}$ is given by $-\widehat{\kappa}_\epsilon^{(d)}\,\widehat{\Omega}_\epsilon^{(d)} = \epsilon^d\,\text{id}_{TM} + O(\epsilon^{d+1})$. Taylor's theorem with remainder therefore implies that there is a smooth $\epsilon$-dependent bundle map $\widehat{\psi}_\epsilon:TM\rightarrow TM$ such that $-\widehat{\kappa}_\epsilon^{(d)}\,\widehat{\Omega}_\epsilon^{(d)} = \epsilon^d(\text{id}_{TM}+\epsilon\,\widehat{\psi}_\epsilon)$. Given a compact set $C\subset M$, we may choose $\epsilon_0$ small enough so that $(\text{id}_{TM}+\epsilon\,\widehat{\psi}_\epsilon)$ is invertible on $C$ when $\epsilon\in[-\epsilon_0,\epsilon_0]$. Thus, in $C$ and for non-zero $\epsilon\in[-\epsilon_0,\epsilon_0]$, we have
\begin{align*}
    (\widehat{\Omega}_\epsilon^{(d)})^{-1} = -\epsilon^{-d}(\text{id}_{TM} + \epsilon\,\widehat{\psi}_\epsilon)^{-1}\,\widehat{\kappa}_\epsilon^{(d)},
\end{align*}
as claimed.
\end{proof}

As an immediate application of Lemma \ref{Darboux_lemma}, we will prove the following refinement of Kruskal's result \ref{existence_of_normalizing_transformation} for regular barely-symplectic manifolds.

\begin{proposition}\label{KL_generator}
Let $X_\epsilon$ be a nearly-periodic Hamiltonian system on a barely-symplectic manifold $(M,\Omega_\epsilon)$. Assume that $(M,\Omega_\epsilon)$ is exact and regular with degeneracy index $d$. There exists a normalizing transformation for $X_\epsilon$ with generator $K_\epsilon$ such that the formal power series $\overline{\Omega}_\epsilon = \exp(-\mathcal{L}_{K_\epsilon})\Omega_\epsilon$ truncates at $O(\epsilon^d)$:
\begin{align*}
    \overline{\Omega}_\epsilon = -\mathbf{d}\overline{\Theta}_0 - \epsilon\,\mathbf{d}\overline{\Theta}_1-\dots-\epsilon^d\,\mathbf{d}\overline{\Theta}_d,
\end{align*}
where each $\overline{\Theta}_k$ satisfies $\mathcal{L}_{R_0}\overline{\vartheta}_k = 0$.
\end{proposition}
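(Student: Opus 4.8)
\emph{The plan} is to first pass to Kruskal's $U(1)$-symmetric frame, then run a $U(1)$-equivariant version of the barely-symplectic Darboux/Moser construction (the formal analogue of Proposition~\ref{barely_symplectic_darboux}) inside that frame, and finally fold the two transformations into a single Lie series. For Step~1, apply Proposition~\ref{existence_of_normalizing_transformation} to get a normalizing transformation $\exp(\mathcal{L}_{G_\epsilon})$, so $\exp(-\mathcal{L}_{G_\epsilon})R_\epsilon = R_0$. Let $\overline{\vartheta}_\epsilon$ be the $U(1)$-invariant primitive of Proposition~\ref{existence_of_mu}, and let $j_\epsilon = \epsilon^d\,\mathcal{J}_\epsilon$ be the smooth bivector supplied by regularity, so $-\widehat{j}_\epsilon\,\widehat{\Omega}_\epsilon = \epsilon^d\,\mathrm{id}_{TM}$. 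Define the formal power series $\Omega'_\epsilon = \exp(-\mathcal{L}_{G_\epsilon})\Omega_\epsilon$, $\vartheta'_\epsilon = \exp(-\mathcal{L}_{G_\epsilon})\overline{\vartheta}_\epsilon$, $j'_\epsilon = \exp(-\mathcal{L}_{G_\epsilon})j_\epsilon$. Since $\exp(-\mathcal{L}_{G_\epsilon})$ is a formal automorphism of the tensor algebra that commutes with contractions, and since conjugation yields $\exp(-\mathcal{L}_{G_\epsilon})\,\mathcal{L}_{R_\epsilon}\,\exp(\mathcal{L}_{G_\epsilon}) = \mathcal{L}_{R_0}$, three facts follow: $\Omega'_\epsilon = -\mathbf{d}\vartheta'_\epsilon$; the bundle maps satisfy $-\widehat{j'_\epsilon}\,\widehat{\Omega'_\epsilon} = \epsilon^d\,\mathrm{id}_{TM}$; and $\mathcal{L}_{R_0}\Omega'_\epsilon = \mathcal{L}_{R_0}\vartheta'_\epsilon = \mathcal{L}_{R_0}j'_\epsilon = 0$, using $\mathcal{L}_{R_\epsilon}\Omega_\epsilon = 0$ from Proposition~\ref{existence_of_roto_rate}, $\mathcal{L}_{R_\epsilon}\overline{\vartheta}_\epsilon = 0$ from Proposition~\ref{existence_of_mu}, and the consequent $\mathcal{L}_{R_\epsilon}\mathcal{J}_\epsilon = 0$. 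In particular $(j'_\epsilon,\Omega'_\epsilon)$ satisfies the hypothesis of Lemma~\ref{Darboux_lemma}.

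For Step~2, put $\Omega_\epsilon^{\prime(d)} = \sum_{k=0}^d\epsilon^k\,\Omega'_k$, $\vartheta_\epsilon^{\prime(d)} = \sum_{k=0}^d\epsilon^k\,\vartheta'_k$, and $\Omega_\epsilon^{\prime\lambda} = (1-\lambda)\,\Omega'_\epsilon + \lambda\,\Omega_\epsilon^{\prime(d)}$, $\lambda\in[0,1]$. The computation in the proof of Lemma~\ref{Darboux_lemma} shows $-\widehat{j'_\epsilon}\,\widehat{\Omega_\epsilon^{\prime\lambda}} = \epsilon^d(\mathrm{id}_{TM}+\lambda\,\epsilon\,\widehat{j'_\epsilon}\,\Delta\widehat{\Omega'_\epsilon})$, with $\Delta\Omega'_\epsilon = \epsilon^{-(d+1)}(\Omega'_\epsilon - \Omega_\epsilon^{\prime(d)})$, so $\widehat{\Omega_\epsilon^{\prime\lambda}}$ has a formal inverse $(\widehat{\Omega_\epsilon^{\prime\lambda}})^{-1} = -\epsilon^{-d}\,\widehat{\kappa_\epsilon^\lambda}$ with $\kappa_\epsilon^\lambda$ a smooth-coefficient formal bivector depending smoothly on $\lambda$. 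Following the proof of Proposition~\ref{barely_symplectic_darboux}, set $Y_\epsilon^\lambda = (\widehat{\Omega_\epsilon^{\prime\lambda}})^{-1}(\vartheta_\epsilon^{\prime(d)} - \vartheta'_\epsilon)$; since $\vartheta_\epsilon^{\prime(d)} - \vartheta'_\epsilon = O(\epsilon^{d+1})$ and $(\widehat{\Omega_\epsilon^{\prime\lambda}})^{-1} = O(\epsilon^{-d})$, $Y_\epsilon^\lambda$ is an $O(\epsilon)$ formal power series, smooth in $\lambda$. Cartan's formula and the identity $\Omega_\epsilon^{\prime(d)} - \Omega'_\epsilon = \mathbf{d}(\vartheta'_\epsilon - \vartheta_\epsilon^{\prime(d)})$ (which holds by equality of Taylor coefficients exactly as in \eqref{ch_cons}) give $\tfrac{d}{d\lambda}(F_\lambda^\epsilon)^*\Omega_\epsilon^{\prime\lambda} = 0$ for the formal time-$\lambda$ flow $F_\lambda^\epsilon$ of $Y_\epsilon^\lambda$, hence $(F_1^\epsilon)^*\Omega_\epsilon^{\prime(d)} = \Omega'_\epsilon$. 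The decisive point is that $\vartheta_\epsilon^{\prime(d)} - \vartheta'_\epsilon$ is $R_0$-invariant coefficient by coefficient and $(\widehat{\Omega_\epsilon^{\prime\lambda}})^{-1}$ is assembled from the $R_0$-invariant objects $j'_\epsilon$ and $\Omega'_\epsilon - \Omega_\epsilon^{\prime(d)}$, so $\mathcal{L}_{R_0}Y_\epsilon^\lambda = 0$ for every $\lambda$, whence $F_\lambda^\epsilon$ commutes with the formal flow of $R_0$.

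For Step~3, since $Y_\epsilon^\lambda = O(\epsilon)$ the operator $(F_1^\epsilon)^*$ is a near-identity formal automorphism of the tensor algebra and therefore equals $\exp(-\mathcal{L}_{H_\epsilon})$ for a unique $O(\epsilon)$ formal vector field $H_\epsilon$ (the group of such automorphisms is pro-unipotent, so $\exp$ is bijective onto it; this is the standard Lie-transform fact, and alternatively $H_\epsilon$ can be built order by order). Because $(F_1^\epsilon)^*$ commutes with $\mathcal{L}_{R_0}$ and $\exp$ is injective on the relevant pro-nilpotent Lie algebra, $[R_0,H_\epsilon] = 0$, so $\exp(-\mathcal{L}_{H_\epsilon})R_0 = R_0$. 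Let $K_\epsilon$ be the Baker--Campbell--Hausdorff combination of $H_\epsilon$ and $G_\epsilon$ with $\exp(-\mathcal{L}_{H_\epsilon})\exp(-\mathcal{L}_{G_\epsilon}) = \exp(-\mathcal{L}_{K_\epsilon})$; this is a well-defined $O(\epsilon)$ formal series because both generators are $O(\epsilon)$. Then $\exp(-\mathcal{L}_{K_\epsilon})R_\epsilon = \exp(-\mathcal{L}_{H_\epsilon})R_0 = R_0$, so $\exp(\mathcal{L}_{K_\epsilon})$ is a normalizing transformation with generator $K_\epsilon$, and $\overline{\Omega}_\epsilon := \exp(-\mathcal{L}_{K_\epsilon})\Omega_\epsilon = \exp(-\mathcal{L}_{H_\epsilon})\Omega'_\epsilon = \Omega_\epsilon^{\prime(d)} = -\sum_{k=0}^d\epsilon^k\,\mathbf{d}\vartheta'_k$. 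The claim follows with $\overline{\Theta}_k = \vartheta'_k$, which satisfies $\mathcal{L}_{R_0}\overline{\Theta}_k = 0$ because $\mathcal{L}_{R_0}\vartheta'_\epsilon = 0$ coefficient by coefficient.

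The main obstacle is ensuring the Darboux-type correction in Step~2 can be chosen $U(1)$-equivariant, so that it neither disturbs $R_0$ nor obstructs the composite from being a genuine normalizing transformation; this is exactly what forces the two-step structure, namely passing first to the frame where $R_\epsilon$ becomes $R_0$ and then running Moser with the $U(1)$-invariant primitive of Proposition~\ref{existence_of_mu} rather than an arbitrary one. The one genuinely delicate technical ingredient is that the scaled inverse $-\epsilon^{-d}\,\widehat{\kappa_\epsilon}$ from Lemma~\ref{Darboux_lemma} is what renders $Y_\epsilon^\lambda$ a bona fide $O(\epsilon)$ vector field despite the degeneracy of $\Omega'_0$; everything else (the flow-to-Lie-series identification, the BCH composition) is routine formal-power-series bookkeeping.
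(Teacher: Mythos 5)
Your argument is sound and reaches the stated conclusion, and its overall architecture matches the paper's: both proofs first apply Kruskal's normalizing transformation $\exp(\mathcal{L}_{G_\epsilon})$ to the $U(1)$-invariant primitive of Proposition \ref{existence_of_mu} and to the identity $-\widehat{j}_\epsilon\widehat{\Omega}_\epsilon=\epsilon^d\,\mathrm{id}_{TM}$, then build an $R_0$-commuting correction using the formal inverse $-\epsilon^{-d}\widehat{\kappa}_\epsilon$ from Lemma \ref{Darboux_lemma}, and finally fold everything into one generator $K_\epsilon$ by BCH. Where you genuinely diverge is the correction step: the paper solves homological equations order by order, producing an infinite sequence of $R_0$-invariant generators $a_\epsilon^{(k)}$, each killing one more coefficient of the invariant primitive modulo exact forms, whereas you run a single equivariant Moser deformation interpolating between $\Omega'_\epsilon$ and its degree-$d$ truncation (the formal analogue of Proposition \ref{barely_symplectic_darboux}) and then identify the resulting near-identity formal flow with one Lie series. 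Your route buys a more conceptual, one-shot argument that reuses the Darboux mechanism verbatim and makes the $R_0$-equivariance manifest at the level of the Moser vector field; the paper's iterative route avoids invoking the surjectivity of $\exp$ onto near-identity formal automorphisms (the Magnus/Dragt--Finn-type fact you appeal to), needing only BCH for compositions. One small bookkeeping slip to fix: since $(F_1^\epsilon)^*\Omega_\epsilon^{\prime(d)}=\Omega'_\epsilon$, it is the \emph{inverse} operator $\bigl((F_1^\epsilon)^*\bigr)^{-1}$ that carries $\Omega'_\epsilon$ to $\Omega_\epsilon^{\prime(d)}$, so you should identify $\exp(-\mathcal{L}_{H_\epsilon})$ with that inverse (equivalently, flip the sign of your generator), exactly as the paper takes $\Psi_\epsilon=(F_1^\epsilon)^{-1}$ in Proposition \ref{barely_symplectic_darboux}; with that relabeling, $H_\epsilon$ is still $O(\epsilon)$ and $R_0$-invariant, and the rest of your Step 3 goes through unchanged.
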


\begin{proof}
Let $R_\epsilon$ be the roto-rate for $X_\epsilon$. By Proposition \ref{existence_of_mu}, there is a formal power series $\overline{\vartheta}_\epsilon$ such that $\Omega_\epsilon = -\mathbf{d}\overline{\vartheta}_\epsilon$ and $\mathcal{L}_{R_\epsilon}\overline{\vartheta}_\epsilon = 0$. By Theorem \ref{existence_of_normalizing_transformation}, there exists a normalizing transformation for $X_\epsilon$ with generator $G_\epsilon$. Applying $\exp(-\mathcal{L}_{G_\epsilon})$ to the identities $\mathcal{L}_{R_\epsilon}\overline{\vartheta}_\epsilon = 0$ and $\Omega_\epsilon = -\mathbf{d}\overline{\vartheta}_\epsilon$ therefore implies $\mathcal{L}_{R_0}\overline{\theta}_\epsilon = 0$ and $\omega_\epsilon = -\mathbf{d}\overline{\theta}_\epsilon$, where $\overline{\theta}_\epsilon = \exp(-\mathcal{L}_{G_\epsilon})\overline{\vartheta}_\epsilon$ and $\omega_\epsilon = \exp(-\mathcal{L}_{G_\epsilon})\Omega_\epsilon$.

We would like to construct a Lie transform with generator $A_\epsilon$ such that 
\begin{align}
    [A_\epsilon,R_0] & = 0\\
    \exp(-\mathcal{L}_{A_\epsilon})\omega_\epsilon &= -\mathbf{d}(\overline{\theta}_0 + \epsilon\,\overline{\theta}_1+\dots+\epsilon^d\,\overline{\theta}_d).\label{truncated_target}
\end{align}
If such an $A_\epsilon$ exists then the Baker-Campbell-Hausdorf (BCH) formula implies $\exp(\mathcal{L}_{G_\epsilon})\exp(\mathcal{L}_{A_\epsilon})$ is a normalizing transformation with the desired properties. We will construct $\exp(\mathcal{L}_{A_\epsilon})$ as the composition of a sequence of Lie transforms $\exp(\mathcal{L}_{a_\epsilon^{(k)}})$ with generators $a_\epsilon^{(k)}$.

Let $\exp(\mathcal{L}_{a_\epsilon^{(1)}})$ be a Lie transform with generator $a_\epsilon^{(1)}$. Applying $\exp(-\mathcal{L}_{a_\epsilon^{(1)}})$ to $\overline{\theta}_\epsilon$ gives
\begin{align*}
    \exp(-\mathcal{L}_{a_\epsilon^{(1)}})\overline{\theta}_\epsilon & = \overline{\theta}_\epsilon -\mathcal{L}_{a_\epsilon^{(1)}}\overline{\theta}_\epsilon + \dots\nonumber\\
    & = \overline{\theta}_\epsilon -\iota_{a_\epsilon^{(1)}}\mathbf{d}\overline{\theta}_\epsilon - \mathbf{d}\iota_{a_\epsilon^{(1)}}\overline{\theta}_\epsilon + \dots,
\end{align*}
where we have applied Cartan's formula for the Lie derivative. Suppose $a_\epsilon^{(1)}$ is chosen such that
\begin{align}
    \epsilon^{d+1}\,\overline{\theta}_{d+1} - \iota_{a_\epsilon^{(1)}}\mathbf{d}(\overline{\theta}_0+\dots+\epsilon^d\,\overline{\theta}_d) = 0.\label{homological_eqn}
\end{align}
Then we would have $\exp(-\mathcal{L}_{a_\epsilon^{(1)}})\mathbf{d}\overline{\theta}_\epsilon =\mathbf{d}( \overline{\theta}_0+\dots+\epsilon^d\,\overline{\theta}_d )+ O(\epsilon^{d+2})$, which is one step closer to our target \eqref{truncated_target}. So let us assess the solvability of Eq.\,\eqref{homological_eqn}.

With ${\omega}_\epsilon^{(d)} = -\mathbf{d}(\overline{\theta}_0+\dots+\epsilon^d\,\overline{\theta}_d)$, Eq.\,\eqref{homological_eqn} reads
\begin{align}
    \widehat{\omega}_\epsilon^{(d)}a_\epsilon^{(1)} = - \epsilon^{d+1}\,\overline{\theta}_{d+1}.\label{op_v}
\end{align}
Since $\Omega_\epsilon$ is a regular barely-symplectic form, there is a smooth $\epsilon$-dependent bivector $j_\epsilon$ such that $\mathcal{J}_\epsilon = \epsilon^{-d}j_\epsilon$ inverts $\Omega_\epsilon$, i.e. $-\epsilon^{-d}\widehat{j}_\epsilon\,\widehat{\Omega}_\epsilon = \text{id}_{TM}$. Applying the Lie transform $\exp(-\mathcal{L}_{G_\epsilon})$ to this identities gives
\begin{align}
    -\widehat{b}_\epsilon\,\widehat{\omega}_\epsilon & = \epsilon^d\text{id}_{TM}\label{tan_identity}
\end{align}
where $b_\epsilon = \exp(-\mathcal{L}_{G_\epsilon})j_\epsilon$. Lemma \ref{Darboux_lemma} therefore implies that there is a formal power series $\kappa_\epsilon$ with bivector coefficients such that $-\epsilon^{-d}\widehat{\kappa}_\epsilon$ is a formal inverse for $\widehat{\omega}_\epsilon^{(d)}$. Applying this formal inverse to both sides of Eq.\,\eqref{op_v} now gives a formula for $a_\epsilon^{(1)}$ with the desired properties:
\begin{align*}
    a_\epsilon^{(1)} = \epsilon\, \widehat{\kappa}_\epsilon\,\overline{\theta}_{d+1}.
\end{align*}
Note that since $\overline{\vartheta}_{d+1}$ and $\omega_\epsilon^{(d)}$ are each $R_0$-invariant, the formula \eqref{kappa_formula} implies this $a_\epsilon^{(1)}$ satisfies $[a_\epsilon^{(1)},R_0] = 0$.

Modulo exact $1$-forms, we now have $\exp(-\mathcal{L}_{a_\epsilon^{(1)}})\overline{\theta}_\epsilon =\overline{\theta}_\epsilon^{(d)} + O(\epsilon^{d+2}) $, where the higher-order terms not displayed are each $R_0$-invariant. Using the same procedure used to find $a_\epsilon^{(1)}$, we may now construct an $a_\epsilon^{(2)}$ such that $\exp(-\mathcal{L}_{a_\epsilon^{(2)}})\exp(-\mathcal{L}_{a_\epsilon^{(1)}})\overline{\theta}_\epsilon = \overline{\theta}_\epsilon^{(d)} + O(\epsilon^{d+3})$ modulo exact $1$-forms, where again the higher-order terms are $R_0$-invariant, and $[a_\epsilon^{(2)},R_0] =0$. Repeating this construction \emph{ad infinitum} produces a sequence of $R_0$-invariant $a_\epsilon^{(n)}$ such that $\dots\exp(-\mathcal{L}_{a_\epsilon^{(3)}})\exp(-\mathcal{L}_{a_\epsilon^{(2)}})\exp(-\mathcal{L}_{a_\epsilon^{(1)}})\overline{\theta}_\epsilon = \overline{\theta}_\epsilon^{(d)}+O(\epsilon^{\infty})$ modulo exact $1$-forms. The BCH formula therefore gives us an $A_\epsilon$ defined by $\exp(-\mathcal{L}_{A_\epsilon}) = \dots\exp(-\mathcal{L}_{a_\epsilon^{(3)}})\exp(-\mathcal{L}_{a_\epsilon^{(2)}})\exp(-\mathcal{L}_{a_\epsilon^{(1)}})$ with the desired properties.

\end{proof}

\begin{example}
Dynamics of a non-relativistic charged particle in a strong magnetic field are described by the Lorentz force system $\dot{\bm{v}} = \bm{v}\times\bm{B}(\bm{x})$, $\dot{\bm{x}} = \epsilon\,\bm{v}$ on $M=\mathbb{R}^3\times\mathbb{R}^3\ni (\bm{x},\bm{v})$. Here $\bm{B} = \nabla\times\bm{A}$ denotes the magnetic field. Assuming $|\bm{B}|$ is nowhere vanishing, this system comprises a Hamiltonian  nearly-periodic system on the regular, exact, barely symplectic manifold $(M,-\mathbf{d}\vartheta_\epsilon)$, where $\vartheta_\epsilon = \bm{A}\cdot d\bm{x} + \epsilon\,\bm{v}\cdot d\bm{x}$ and $H_\epsilon = \epsilon^2\,\tfrac{1}{2}\,|\bm{v}|^2$.

Particles that move under the influence of the Lorentz force rapidly gyrate around magnetic field lines, while drifting relatively slowly along and across them. The slow drifts are described by the so-called guiding center theory. In Refs.\,\cite{Littlejohn_1981,Littlejohn_1982}, R. L. Littlejohn devised a method of computing normalizing transformations for the Lorentz force system that exposed the Hamiltonian structure underlying guiding center dynamics for the first time. As explained in Ref.\,\cite{Burby_gc_2013}, arbitrary choices inherent to Littlejohn's method of selecting the generator $G_\epsilon$ of the normalizing transformation may be performed to ensure
\begin{align*}
    \exp(-\mathcal{L}_{G_\epsilon})\vartheta_\epsilon &= \bm{A}\cdot d\bm{x} + \epsilon\, (\bm{v}\cdot\bm{b})\bm{b}\cdot d\bm{x}\nonumber\\
    &+\epsilon^2\,\frac{1}{2|\bm{B}|}\left(\bm{v}\times\bm{b}\cdot d\bm{v} - (\bm{v}\cdot\bm{b})[\nabla\bm{b}\cdot\bm{v}\times\bm{b}]\cdot d\bm{x}\right) + O(\epsilon^{\infty})
\end{align*}
modulo exact $1$-forms. Since the degeneracy index for $-\mathbf{d}\vartheta_\epsilon$ is $d = 2$, this truncation could have been predicted by Proposition \ref{KL_generator}.
\end{example}

We close this section by noting that irregular barely-symplectic forms may degenerate so rapidly as $\epsilon\rightarrow 0$ that symplectic methods do not apply to them. We are currently unfamiliar with any physical examples of such forms. As such, we  consider the task of developing tools to handle the irregular case beyond the scope of this Article.


\subsection{Variational characterization of slow manifolds in nearly-periodic Hamiltonian systems}
In Section \ref{sms_for_nps} we constructed slow manifolds for nearly-periodic systems as fixed point sets for certain truncations of the roto-rate. To facilitate the study of normal stability in nearly-periodic Hamiltonian systems, this subsection will enhance that construction through the use of the adiabatic invariant $\mu_\epsilon$ described in Definition \ref{mu_defined}. In particular, for nearly-periodic Hamiltonian systems on regular, exact, barely-symplectic manifolds, we will construct slow manifolds that coincide with the set of critical points for certain truncations of $\mu_\epsilon$. These manifolds will also comprise fixed-point sets of truncations of the roto-rate, and thereby represent special cases of the manifolds given by Theorem\,\ref{sm_existence_thm}.


Our construction will make use of the well-known result \cite{Guillemin_Sternberg_1982} from symplectic geometry that any $U(1)$-momentum map on a symplectic manifold is \emph{Morse-Bott} with critical manifold equal to the set of fixed points for the underlying $U(1)$-action. For us, certain truncations of the adiabatic invariant will serve as the momentum map, and the corresponding critical manifold will provide us with our desired slow manifold. However, since regular barely-symplectic manifolds are not the same as ordinary symplectic manifolds, we will need to resort to the heuristic mentioned in Section \ref{bsm_section} in our analysis. Namely, if $\Omega_\epsilon$ is a regular barely-symplectic form with degeneracy index $d$, we should include at least the first $d+1$ terms in the $2$-form's $\epsilon$-power series before proceeding with symplectic methods.

We begin with a precise definition of Morse-Bott functions.
\begin{definition}
Let $f:M\rightarrow\mathbb{R}$ be a smooth function on a manifold $M$, and denote the set of critical points for $f$ as $S_f = \{s\in M\mid \mathbf{d}f_s = 0\}$. For each $s\in S_f$, the quadratic term in $f$'s Taylor expansion at $s$ defines a unique symmetric bilinear form $\bm{H}_s(f)\in T_s^*M\otimes  T_s^*M$ called the \textbf{Hessian form}. The function $f$ is \textbf{Morse-Bott} when
\begin{itemize}
    \item the set $S_f$ is a smooth embedded submanifold,
    \item for each $s\in S_f$ the null space of $\widehat{\bm{H}}_s(f):T_sM\rightarrow T_s^*M$ is precisely $T_sS_f$.
\end{itemize}
When $f$ is Morse-Bott, we say $S_f$ is the \textbf{critical manifold} for $f$. Since $\mathrm{im}\,\widehat{\bm{H}}_s(f) = (T_sS_f)_0$, the space of $1$-forms that annihilate $T_sS_f$, $\widehat{H}_s(f)$ induces a linear isomorphism $\widehat{\bm{H}}_s^\perp(f):T_sM/T_sS_f\rightarrow (T_sS_f)_0$ called the \textbf{transverse Hessian operator}.
\end{definition}

Next we explain an important mechanism by which Morse-Bott functions arise in nature. In particular, we will show that Noether invariants are automatically Morse-Bott. This result is striking since generic smooth functions are not Morse-Bott.

\begin{proposition}[Gillemin and Sternberg \cite{Guillemin_Sternberg_1982}]\label{symplectic_morse_bott}
If $(M,\Omega)$ is a symplectic manifold equipped with a symplectic $U(1)$-action and corresponding momentum map $\mu$, then $\mu$ is Morse-Bott.
\end{proposition}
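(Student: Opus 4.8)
The plan is to work locally near a fixed point $s \in Z$ of the $U(1)$-action, where $Z$ denotes the fixed-point set, and to linearize everything. The first step is to recall from Lemma~\ref{S0_is_submanifold} (and its proof) that $Z$ is an embedded submanifold, and moreover that near any $s \in Z$ there are $U(1)$-equivariant coordinates, obtained from the Riemannian exponential map $\exp_s$ for a $U(1)$-invariant metric, in which the action is \emph{linear}: it is given by $L_\theta = T_s\Phi_\theta$ acting on $T_sM$, and $Z$ corresponds to the fixed subspace $V^{U(1)} = \ker(L - \mathrm{id})$ (at the infinitesimal level, $\ker R'(s)$ where $R$ is the generator). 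So the two things to verify — that $S_\mu := \{d\mu = 0\}$ is a submanifold, and that $\ker \widehat{\bm H}_s(\mu) = T_s S_\mu = T_s Z$ — reduce to statements about the quadratic form $\bm H_s(\mu)$.

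Next I would identify this Hessian. Since $\mu$ is a momentum map for the $U(1)$-action, $\iota_R \Omega = \mathbf{d}\mu$, so $\mathbf{d}\mu$ vanishes exactly where $R$ vanishes, i.e. $S_\mu = Z$ as sets; this already shows $S_\mu$ is the embedded submanifold $Z$, giving the first Morse-Bott condition for free. Differentiating $\iota_R\Omega = \mathbf{d}\mu$ at $s \in Z$, and using $R(s) = 0$, gives the key identity: for $u,v \in T_sM$,
\begin{align*}
\bm H_s(\mu)(u,v) = \Omega_s\big(\nabla_u R,\, v\big) = \Omega_s\big(A u,\, v\big),
\end{align*}
where $A = \nabla R|_s \in \mathfrak{gl}(T_sM)$ is the infinitesimal linearized action; equivalently $A$ is the generator of the linear circle action $L_\theta$ on $T_sM$. (One should check $\bm H_s(\mu)$ is symmetric, which follows because $A$ generates a circle action preserving the linear symplectic form $\Omega_s$, i.e. $A$ is in $\mathfrak{sp}(T_sM,\Omega_s)$, so $\Omega_s(Au,v) = \Omega_s(Av,u)$ — more transparently, average: $\Omega_s(Au,v)$ equals the Hessian of a smooth function and hence is automatically symmetric.)

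The heart of the matter is then the linear-algebra fact that for a symplectic vector space $(W,\Omega)$ and a generator $A \in \mathfrak{sp}(W,\Omega)$ of a circle action, the bilinear form $Q(u,v) = \Omega(Au,v)$ has null space exactly $\ker A = W^{U(1)}$. I would prove this by decomposing $W$ into $A$-invariant, $\Omega$-orthogonal subspaces: the kernel $\ker A$ (on which $Q \equiv 0$) and, on its symplectic complement, a sum of two-dimensional symplectic eigenplanes on which $A$ acts as rotation by a nonzero integer angular speed $n_k$, where $Q$ restricts to $n_k$ times the standard area form composed with the complex structure — i.e. a definite form. Hence $Q$ restricted to $(\ker A)^{\Omega}$ is nondegenerate, and $\ker Q = \ker A$. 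Pulling this back through the equivariant exponential chart identifies $\ker\widehat{\bm H}_s(\mu)$ with $T_s Z = T_s S_\mu$, which is the second Morse-Bott condition. The main obstacle — really the only non-bookkeeping point — is getting this normal-form decomposition of $A$ and its interaction with $\Omega$ correct, in particular that the integrality of the circle action forces the nonzero ``eigenvalues'' of $A$ to be genuine nonzero reals times $i$, so that $Q$ is strictly definite transverse to the kernel rather than merely nondegenerate; everything else is assembling Lemma~\ref{S0_is_submanifold}, Cartan's formula, and this local computation.
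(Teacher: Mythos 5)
Your proposal is correct and follows essentially the same route as the paper: both identify $S_\mu$ with the fixed-point set $Z$ via $\iota_R\Omega=\mathbf{d}\mu$, invoke Lemma~\ref{S0_is_submanifold} for the submanifold property, and derive the key identity $\bm{H}_s(\mu)(u,v)=\Omega_s(\widehat{r}_s u,v)$ by linearizing the momentum-map equation at $s$. The only difference is your final step: the equivariant eigenplane decomposition and the transverse definiteness it yields are more than is needed, since non-degeneracy of $\Omega_s$ alone gives $\ker\widehat{\bm{H}}_s(\mu)=\ker\widehat{r}_s=T_sS_\mu$ directly, which is exactly the Morse--Bott condition and is how the paper concludes.
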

\begin{proof}
Let $\Phi_\theta:M\rightarrow M$ denote symplectic $U(1)$-action, and let $R$ denote the corresponding  infinitesimal generator. By the definition of momentum maps, we have $\iota_R\Omega = \mathbf{d}\mu$. It follows from non-degeneracy of $\Omega$ that the set of critical points $S_\mu$ for $\mu$ coincides the the set of fixed points for $R$. Since the set of fixed points for any circle action is a smooth submanifold by Lemma \ref{S0_is_submanifold}, we see that $S_\mu\subset M$ is a smooth submanifold. 

Now consider a point $s\in S_\mu$ and the corresponding Hessian form $\bm{H}_s(\mu)$. Let $\widehat{r}:T_sM\rightarrow T_sM$ be the infinitesimal generator of the linearized $U(1)$-action $T_s\Phi_\theta:T_sM\rightarrow T_sM$. Taylor expanding $\iota_R\Omega = \mathbf{d}\mu$ to first order at $s$ implies $\Omega_s(\widehat{r}_s\,X_1,X_2) = \bm{H}_s(\mu)(X_1,X_2)$ for each pair $X_1,X_2\in T_sM$. In terms of the bundle maps corresponding to $\Omega$ and $\bm{H}(\mu)$, the last condition is equivalent to 
\begin{align}
    \widehat{\Omega}_s\,\widehat{r}_s= \widehat{\bm{H}}_s(\mu).\label{Hessian_formula_symplectic}
\end{align}
By non-degeneracy of $\Omega_s$, \eqref{Hessian_formula_symplectic} shows $\widehat{\bm{H}}_s(\mu)\,X = 0$ if and only if $\widehat{r}_s\,X = 0$. This completes the proof since $\text{ker}\,\widehat{r}_s = T_sS_\mu$.
\end{proof}

To connect the preceding results with nearly-periodic Hamiltonian systems on barely-symplectic manifolds, we must reckon with the fact that the $\epsilon\rightarrow 0$ limit of a barely-symplectic form $\Omega_\epsilon$ may be degenerate. This implies that Proposition \ref{symplectic_morse_bott} must be applied with care, since non-degeneracy of $\Omega$ is crucial to that result. We are therefore motivated to introduce the notion of a \emph{barely-Morse-Bott} function.

\begin{definition}
A smooth $\epsilon$-dependent function $f_\epsilon:M\rightarrow\mathbb{R}$ is \textbf{barely-Morse-Bott} if there is an $\epsilon_0>0$, a manifold $\Sigma_0$, and a smooth $\epsilon$-dependent embedding $\mathcal{S}_\epsilon:\Sigma_0\rightarrow M$, $\epsilon\in[-\epsilon_0,\epsilon_0]$, such that
\begin{itemize}
    \item $f_\epsilon$ is Morse-Bott for each non-zero $\epsilon\in[-\epsilon_0,\epsilon_0]$
    \item $\mathcal{S}_\epsilon(\Sigma_0) $ is the critical manifold for $f_\epsilon$ for each non-zero $\epsilon\in [-\epsilon_0,\epsilon_0]$.
\end{itemize}
We say $\mathcal{S}_\epsilon$ is the \textbf{critical embedding} for $f_\epsilon$. A barely-Morse-Bott function $f_\epsilon$ is \textbf{regular} if for each non-zero $\epsilon\in[-\epsilon_0,\epsilon_0]$ and critical point $s\in S_{f_\epsilon}$ we have $[\widehat{\bm{H}}_s^\perp(f_\epsilon)]^{-1} = \epsilon^{-\ell}\,\widehat{\bm{L}}_s(f_\epsilon)$ for some non-negative integer $\ell$ and smooth $\epsilon$-dependent bunlde map $\widehat{\bm{L}}_s(f_\epsilon):(T_sS_{f_\epsilon})_0\rightarrow T_sM/T_sS_{f_\epsilon}$. The smallest such $\ell$ is the \textbf{degeneracy index} of $f_\epsilon$.
\end{definition}

As a final preparatory step, we introduce some useful terminology for discussing nearly-periodic Hamiltonian systems with adiabatic invariants $\mu_\epsilon$ that vanish at the first few orders in $\epsilon$. While this vanishing phenomenon may seem like a technical curiosity, it occurs in important applications such as magnetized charged particle dynamics, and also may be exploited to strengthen our eventual results on normal stability.

\begin{definition}
If $X_\epsilon$ is a nearly-periodic Hamiltonian system with adiabatic invariant $\mu_\epsilon = \mu_0 + \epsilon\,\mu_1 + \dots$, the \textbf{vanishing index} for $\mu_\epsilon$ is the largest non-negative integer $\nu$ such that $\mu_\epsilon = O(\epsilon^v)$. If $\mu_\epsilon$ has vanishing index $\nu$, so that $\mu_k = 0$ for $k<v$, the associated \textbf{reduced adiabatic invariant} $\mu_\epsilon^* = \mu_0^* + \epsilon\,\mu_1^* + \epsilon^2\,\mu_2^* + \dots$ is the unique formal power series such that
\begin{align}
    \mu_\epsilon = \epsilon^{v}\,\mu_\epsilon^*.\label{mu_star_def}
\end{align}

\end{definition}



\begin{theorem}\label{crit_mu_is_SM}
Let $X_\epsilon$ be a nearly-periodic Hamiltonian system on the barely-symplectic manifold $(M,\Omega_\epsilon)$. Assume $\Omega_\epsilon$ is exact and regular with degeneracy index $d$. Also assume that the adiabatic invariant $\mu_\epsilon = \epsilon^v\,\mu_\epsilon^*$ has vanishing index $\nu\geq 0$. For each integer $N\geq 0$ and compact, codimension-$0$ submanifold $C_0\subset M$, with or without boundary, there exists an increasing sequence of codimension-$0$ compact submanifolds, $C_0\subset C_0^\prime\subset C_0^{\prime\prime}$, with $\mathrm{int}\,C_0^\prime\supset C_0$, $\mathrm{int}\,C_0^{\prime\prime}\supset C_0^\prime$, and a 
smooth $\epsilon$-dependent function $\mu_\epsilon^{*(N)}:\mathrm{int}\,C_0^{\prime\prime}\rightarrow \mathbb{R}$ such that
\begin{itemize}
    \item[(1)] $\mu_\epsilon^{*(N)} - \mu_\epsilon^* = O(\epsilon^{N+1})$ on $\mathrm{int}\,C_0^\prime$
    \item[(2)] $\mu_\epsilon^{*(N)}$ is barely-Morse-Bott with critical embedding $\mathcal{S}_\epsilon^{(N)}:S_0\cap \mathrm{int}\,C_0^{\prime\prime}\rightarrow \mathrm{int}\,C_0^{\prime\prime} $.
    \item[(3)] $\mu_\epsilon^{*(N)}$ is regular with degeneracy index at most $d-\nu$.
    \item[(4)] $\mathcal{S}_\epsilon^{(N)}\mid S_0\cap \mathrm{int}\,C_0$ is an $N^{\mathrm{th}}$-order parameterized slow manifold for $X_\epsilon$ whose image is contained in $\mathrm{int}\,C_0^\prime$ for small enough $\epsilon$.
\end{itemize}

    

\end{theorem}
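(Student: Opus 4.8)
The plan is to combine the normalizing transformation refinement of Proposition \ref{KL_generator} with the momentum-map construction of Proposition \ref{symplectic_morse_bott}, and then cut everything down to a compact set exactly as in the proof of Theorem \ref{sm_existence_thm}. First I would invoke Proposition \ref{KL_generator} to obtain a normalizing transformation $\exp(\mathcal{L}_{K_\epsilon})$ for which $\overline{\Omega}_\epsilon = \exp(-\mathcal{L}_{K_\epsilon})\Omega_\epsilon$ truncates at order $\epsilon^d$, say $\overline{\Omega}_\epsilon = \overline{\Omega}_\epsilon^{(d)} = -\mathbf{d}\overline{\vartheta}_\epsilon^{(d)}$ with each $\overline{\vartheta}_k$ being $R_0$-invariant, and correspondingly the roto-rate pulls back to $R_0$. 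Transporting the adiabatic invariant through the same transformation, $\overline{\mu}_\epsilon = \exp(-\mathcal{L}_{K_\epsilon})\mu_\epsilon = \iota_{R_0}\overline{\vartheta}_\epsilon$ (up to checking that $\mu_\epsilon = \iota_{R_\epsilon}\overline{\vartheta}_\epsilon$ transforms this way, which follows since $\exp(-\mathcal{L}_{K_\epsilon})R_\epsilon = R_0$), which by the vanishing-index hypothesis equals $\epsilon^\nu\,\overline{\mu}_\epsilon^*$ with $\overline{\mu}_\epsilon^* = \iota_{R_0}(\epsilon^{-\nu}\overline{\vartheta}_\epsilon)$ — here I need the fact, coming from $\mu_\epsilon = O(\epsilon^\nu)$, that $\iota_{R_0}\overline{\vartheta}_k = 0$ for $k<\nu$, so the division is legitimate. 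Then define $\overline{\mu}_\epsilon^{*(N)}$ by truncating the formal power series $\overline{\mu}_\epsilon^*$ at order $\epsilon^N$; equivalently, take $\overline{\mu}_\epsilon^{*(N)} = \iota_{R_0}(\epsilon^{-\nu}\overline{\vartheta}_\epsilon^{(N+\nu)})$, a genuine smooth $\epsilon$-dependent function.

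Next I would analyze the barely-Morse-Bott structure of $\overline{\mu}_\epsilon^{*(N)}$ in these normalized coordinates. Because $\iota_{R_0}\overline{\Omega}_\epsilon^{(d)} = \mathbf{d}(\iota_{R_0}\overline{\vartheta}_\epsilon^{(d)})$ (using $\mathcal{L}_{R_0}\overline{\vartheta}_\epsilon^{(d)} = 0$ and Cartan), the function $\iota_{R_0}\overline{\vartheta}_\epsilon^{(d)} = \epsilon^\nu\,\overline{\mu}_\epsilon^{*(d-\nu)}$ is, up to the $\epsilon^\nu$ factor, the momentum map for the $U(1)$-action $\Phi^0_\theta$ with respect to the non-degenerate (on a compact set, for small nonzero $\epsilon$, by Lemma \ref{Darboux_lemma}) form $\overline{\Omega}_\epsilon^{(d)}$. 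By the argument of Proposition \ref{symplectic_morse_bott} — really Eq.\,\eqref{Hessian_formula_symplectic}, $\widehat{\overline{\Omega}}_{\epsilon,s}^{(d)}\widehat{r}_s = \widehat{\bm{H}}_s(\iota_{R_0}\overline{\vartheta}_\epsilon^{(d)})$ — the critical set is exactly $S_0 = \{R_0 = 0\}$, the Hessian's kernel is $T_sS_0$, and the transverse Hessian operator is $\widehat{\overline{\Omega}}_{\epsilon,s}^{(d)}\widehat{r}_s^\perp$ restricted to the normal space; since $(\widehat{\overline{\Omega}}_\epsilon^{(d)})^{-1} = -\epsilon^{-d}\widehat{\kappa}_\epsilon$ with $\kappa_\epsilon$ a $0^{\mathrm{th}}$-order approximation of the (transformed) Poisson bivector, and $\widehat{r}_s$ is invertible on the normal space (the linearized roto-rate action has no fixed vectors off $T_sS_0$), the inverse transverse Hessian of $\overline{\mu}_\epsilon^{*(d-\nu)}$ scales like $\epsilon^{-(d-\nu)}$ times a smooth bundle map — this gives regularity with degeneracy index at most $d-\nu$. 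For general $N$, I'd note that $\overline{\mu}_\epsilon^{*(N)}$ and $\overline{\mu}_\epsilon^{*(d-\nu)}$ differ by $O(\epsilon^{\min(N,d-\nu)+1})$ relative to leading order, so the transverse Hessian is perturbed by a relatively higher-order term and the critical manifold persists as a smooth $\epsilon$-dependent embedding by the implicit function theorem (applied to the normal gradient of $\overline{\mu}_\epsilon^{*(N)}$), its critical embedding $\overline{\mathcal{S}}_\epsilon^{(N)}$ staying $O(\epsilon)$-close to $S_0$; regularity and the degeneracy-index bound survive because they are open conditions.

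Finally I would pull everything back by $\Psi_\epsilon$, the honest diffeomorphism that approximates $\exp(\mathcal{L}_{K_\epsilon})$ to order $N$ on a compact set, constructed exactly as in the proof of Theorem \ref{sm_existence_thm} via a cutoff function $w$ supported in $C_0^{\prime\prime}$ with $w\equiv 1$ on $C_0^\prime\supset C_0$. Set $\mu_\epsilon^{*(N)} = \Psi_\epsilon^*\,\overline{\mu}_\epsilon^{*(N)}$ and $\mathcal{S}_\epsilon^{(N)} = \Psi_\epsilon^{-1}\circ \overline{\mathcal{S}}_\epsilon^{(N)}$ restricted to $S_0\cap\mathrm{int}\,C_0^{\prime\prime}$. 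Property (1) follows because on $\mathrm{int}\,C_0^\prime$, $\Psi_\epsilon^*$ agrees with $\exp(\mathcal{L}_{K_\epsilon})$ through order $N$ (the analogue of Eq.\,\eqref{Phi_approximates_nt}) and $\exp(\mathcal{L}_{K_\epsilon})\overline{\mu}_\epsilon^{*(N)} = \exp(\mathcal{L}_{K_\epsilon})\overline{\mu}_\epsilon^* + O(\epsilon^{N+1}) = \mu_\epsilon^* + O(\epsilon^{N+1})$; properties (2) and (3) are diffeomorphism-invariant, hence inherited from the normalized picture; property (4) follows since $\mathcal{S}_\epsilon^{(N)}(S_0\cap\mathrm{int}\,C_0)$ is the critical set of a function which differs from $\mu_\epsilon^*$ by $O(\epsilon^{N+1})$ and because $S_0\cap\mathrm{int}\,C_0$ is a fixed-point set for the truncated roto-rate transported by $\Psi_\epsilon$ — exactly the manifolds of Theorem \ref{sm_existence_thm}, so the slow-manifold estimates \eqref{small_perp}–\eqref{small_parallel} apply verbatim. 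The main obstacle I anticipate is bookkeeping the interaction of the three indices $d$, $\nu$, and $N$: making sure the division by $\epsilon^\nu$ is genuinely allowed (i.e., that $\iota_{R_0}\overline{\vartheta}_k$ vanishes for $k<\nu$, which needs a small argument that the vanishing index of $\mu_\epsilon$ is preserved under the normalizing transformation, or alternatively a direct check using $\mathcal{L}_{R_0}$-averaging), and then tracking how truncation at order $N$ in $\mu_\epsilon^*$ corresponds to truncation at order $N+\nu$ in $\overline{\vartheta}_\epsilon$, so that the degeneracy-index bound $d-\nu$ comes out correctly rather than $d$.
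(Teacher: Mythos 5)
Your overall strategy is the paper's: use the generator $K_\epsilon$ of Proposition \ref{KL_generator}, recognize the transformed adiabatic invariant as an exact momentum map for $R_0$ so that the Guillemin--Sternberg argument (Proposition \ref{symplectic_morse_bott}) applies, extract the $d-\nu$ degeneracy bound from $(\widehat{\overline{\Omega}}_\epsilon)^{-1}=-\epsilon^{-d}\widehat{\overline{\kappa}}_\epsilon$ and the $\epsilon^\nu$ vanishing, and pull everything back by the cutoff diffeomorphism $\Psi_\epsilon$ as in Theorem \ref{sm_existence_thm}. But there is a genuine gap in where you put the $N$-dependence. The paper takes $\mu_\epsilon^{*(N)}=\Psi_\epsilon^*\overline{\mu}_\epsilon^*$ with the \emph{full} $\overline{\mu}_\epsilon^*=\iota_{R_0}(\overline{\Theta}_\nu+\dots+\epsilon^{d-\nu}\overline{\Theta}_d)$, which is already a polynomial of degree $d-\nu$ because $\overline{\Omega}_\epsilon$ terminates at order $d$; all $N$-dependence lives in $\Psi_\epsilon=\exp(w\,K_\epsilon^{(N)})$, so in the normalized picture the critical manifold is \emph{exactly} $S_0$ and no perturbation argument is needed. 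You instead truncate $\overline{\mu}_\epsilon^*$ at order $N$, i.e.\ use $\iota_{R_0}\overline{\vartheta}_\epsilon^{(N+\nu)}$. When $N+\nu<d$ this is a momentum map only for $-\mathbf{d}\overline{\vartheta}_\epsilon^{(N+\nu)}=\sum_{k\leq N+\nu}\epsilon^k\overline{\Omega}_k$, an over-truncated form that Lemma \ref{Darboux_lemma} does not guarantee to be non-degenerate for any $\epsilon$, and the Morse--Bott conclusion fails. Concretely, in the paper's oscillator example ($\nu=0$, $d=n-1$), truncating $\mu_\epsilon^*$ at order $N<d$ gives critical set $\{z_1=\dots=z_{N+1}=0\}$, which strictly contains $S_0$, so property (2) — a critical embedding defined on $S_0\cap\mathrm{int}\,C_0^{\prime\prime}$ — is false for your choice of function.

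Your fallback implicit-function-theorem step does not repair this. The perturbation of the gradient is $O(\epsilon^{N+1})$ while the transverse Hessian of the unperturbed momentum map has smallest eigenvalue only of order $\epsilon^{d-\nu}$ (its inverse is $O(\epsilon^{-(d-\nu)})$), so for $N+1\leq d-\nu$ the displacement is not even small, let alone controlled; moreover the Morse--Bott property (full gradient vanishing on a manifold whose tangent space equals the Hessian kernel) is not an open condition and does not survive generic small perturbations, so "regularity and the degeneracy-index bound survive because they are open conditions" is not a proof. Finally, if the critical embedding in the normalized picture were not exactly the inclusion of $S_0$, then $\mathcal{S}_\epsilon^{(N)}$ would not coincide with $\Psi_\epsilon^{-1}\mid S_0$, and property (4) would no longer follow "verbatim" from the estimates \eqref{small_perp}--\eqref{small_parallel} of Theorem \ref{sm_existence_thm}. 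The fix is simply not to truncate in the normalized picture: keep the terminating polynomial $\overline{\mu}_\epsilon^*$ (your own observation that it is an exact momentum map for the genuinely symplectic $\overline{\Omega}_\epsilon^{(d)}$ then gives (2) and (3) exactly), and let the order-$N$ accuracy of property (1) come entirely from $\Psi_\epsilon$ agreeing with $\exp(\mathcal{L}_{K_\epsilon})$ through order $N$ on $\mathrm{int}\,C_0^\prime$ — which is the one part of your argument for (1) that is already correct. A smaller point: identifying $\exp(-\mathcal{L}_{K_\epsilon})\mu_\epsilon$ with $\iota_{R_0}$ of the truncated primitive holds only up to (locally) constant terms, since two $R_0$-invariant primitives of $\overline{\Omega}_\epsilon$ differ by a closed invariant $1$-form; the paper pins the constant to zero by evaluating on the zero locus of $R_\epsilon$, a step you should not omit.
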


\begin{remark}
The Theorem \underline{does not say} $\mu_\epsilon^* - \mu_\epsilon^{*(N)} = O(\epsilon^{N+1})$ on $\mathrm{int}\,C_0^{\prime\prime}$. 
\end{remark}

\begin{proof}
The proof begins by repeating the argument from the proof of Theorem \ref{sm_existence_thm}, but using the generator $K_\epsilon$ provided by Proposition \ref{KL_generator} in place of the generator $G_\epsilon$ provided by Proposition \ref{existence_of_normalizing_transformation}. Recall that this argument constructs an increasing sequence of compact codimension-$0$ submanifolds $C_0\subset C_0^\prime\subset C_0^{\prime\prime}$ with the desired nesting property when $\partial C_0\neq \emptyset$; when $\partial C_0 = \emptyset$ we now take $C_0^{\prime\prime}=C_0^\prime  = C_0$. In this manner, for each  integer $N\geq 0$, we obtain a smooth $\epsilon$-dependent diffeomorphism $\Psi_\epsilon:M\rightarrow M$ that is equal to the exponential of the vector field $\mathcal{K}_\epsilon^{(N)} = w\,K_\epsilon^{(N)}$. Here $w=1$ in $C_0^\prime$ and $w=0$ outside of $C_0^{\prime\prime}$, which ensures $\Psi_\epsilon(C_0^{\prime\prime}) = C_0^{\prime\prime}$. We also know that $\mathcal{S}_\epsilon^{(N)} = \Psi_\epsilon^{-1}|S_0\cap \text{int}\,C_0^{\prime\prime}$ gives a smooth $\epsilon$-dependent embedding that restricts to an $N^{\text{th}}$-order parameterized slow manifold on $S_0\cap \text{int}\,C_0$.

Next we construct the function $\mu_\epsilon^{*(N)}$. Let $d$ denote the degeneracy index for $\Omega_\epsilon$ and let $\overline{\Theta}_k$, $k\in 0,\dots,d$, be the $1$-forms given by Proposition \ref{KL_generator}. We introduce the smooth $\epsilon$-dependent function $\overline{\mu}_\epsilon = \iota_{R_0}(\overline{\Theta}_0 + \epsilon\,\overline{\Theta}_1 + \dots + \epsilon^d\,\overline{\Theta}_d)$. By construction, this function satisfies the Hamilton equation $\mathbf{d}\overline{\mu}_\epsilon = \iota_{R_0}\overline{\Omega}_\epsilon$, where $\overline{\Omega}_\epsilon$ is defined in the statement of Proposition \ref{KL_generator}. Since $\exp(\mathcal{L}_{K_\epsilon})\overline{\Omega}_\epsilon = \Omega_\epsilon$ and $\exp(\mathcal{L}_{K_\epsilon})R_0 = R_\epsilon$, we also have the formal power series identity $\iota_{R_\epsilon}\Omega_\epsilon = \mathbf{d}[\exp(\mathcal{L}_{K_\epsilon})\overline{\mu}_\epsilon]$. But because the same identity is satisfied with the adiabatic invariant $\mu_\epsilon$ in place of $\exp(\mathcal{L}_{K_\epsilon})\overline{\mu}_\epsilon$, we must have $\exp(\mathcal{L}_{K_\epsilon})\overline{\mu}_\epsilon = \mu_\epsilon + c_\epsilon$, where $c_\epsilon$ is some formal power series with constant coefficients. Using the fact that $\mu_\epsilon$ and $\exp(\mathcal{L}_{K_\epsilon})\overline{\mu}_\epsilon$ each vanish on the zero locus $R_\epsilon = 0$, we find $c_\epsilon = 0$, whence it follows $\exp(\mathcal{L}_{K_\epsilon})\overline{\mu}_\epsilon = \mu_\epsilon$. If $\nu$ denotes the vanishing index for $\mu_\epsilon$, we therefore obtain $\overline{\mu}_\epsilon = \epsilon^{\nu}\exp(-\mathcal{L}_{K_\epsilon})\mu_\epsilon^*$, which can only be satisfied if $\iota_{R_0}\overline{\Theta}_k =0$ for $k=0,\dots,\nu-1$, or $\overline{\mu}_\epsilon = \epsilon^\nu\,\iota_{R_0}(\overline{\Theta}_v + \dots + \epsilon^d\,\overline{\Theta}_d) = \epsilon^{\nu}(\overline{\mu}_0^* + \epsilon\,\overline{\mu}_1^* + \dots + \epsilon^{d-\nu}\,\overline{\mu}_{d-\nu}^*) $
where $\overline{\mu}_k^* = \overline{\mu}_{\nu + k}$, $k = 0,\dots,d-\nu$. Finally, we define
\begin{align}
   \mu_\epsilon^{*(N)} = \Psi_\epsilon^*(\overline{\mu}_0^* + \epsilon\,\overline{\mu}_1^* + \dots + \epsilon^{d-\nu}\,\overline{\mu}_{d-\nu}^*).\label{mu_approximation}
\end{align}

Now we would like to show that $\mu_\epsilon^{*(N)}$ defined in \eqref{mu_approximation} is a regular barely-Morse-Bott function with degeneracy index at most $d-\nu$ and critical embedding $\mathcal{S}_\epsilon^{(N)}$. We will argue by showing that $\overline{\mu}_\epsilon^* = \Phi_{\epsilon_*}\mu_\epsilon^{*(N)}$ is a regular barely-Morse-Bott function on $\text{int}\, C_0^{\prime\prime}$. First observe that because $\Omega_\epsilon$ is a regular barely-symplectic form with degeneracy index $d$, the $2$-form $\overline{\Omega}_\epsilon$ has a formal inverse of the form $\epsilon^{-d}\overline{j}_\epsilon$, where $\overline{j}_\epsilon$ is a formal power series with bivector coefficients. Lemma \ref{Darboux_lemma} therefore implies that there is an $\epsilon_0>0$ such that $\overline{\Omega}_\epsilon$ is symplectic when restricted to $ C_0^{\prime\prime}$ for all non-zero $\epsilon\in[-\epsilon_0,\epsilon_0]$. Moreover, there is a smooth $\epsilon$-dependent bivector $\overline{\kappa}_\epsilon$ such that $(\widehat{\overline{\Omega}}_\epsilon)^{-1} = -\epsilon^{-d}\,\widehat{\overline{\kappa}}_\epsilon$ for non-zero $\epsilon\in[-\epsilon_0,\epsilon_0]$. Because $\mathbf{d}\overline{\mu}_\epsilon = \iota_{R_0}\overline{\Omega}_\epsilon$, Lemma \ref{symplectic_morse_bott} implies $\overline{\mu}_\epsilon\mid  C_0^{\prime\prime} $, and therefore $\overline{\mu}_\epsilon^*\mid  C_0^{\prime\prime} = \epsilon^{-\nu}\mu_\epsilon\mid  C_0^{\prime\prime}$ is Morse-Bott for non-zero $\epsilon\in[-\epsilon_0,\epsilon_0]$ with critical manifold $S_0\cap \text{int}\, C_0^{\prime\prime}$. This shows that $\overline{\mu}_\epsilon^*\mid \text{int}\, C_0^{\prime\prime}$ is barely-Morse-Bott with critical embedding $s_0\in S_0\cap \text{int}\, C_0^{\prime\prime}\mapsto s_0\in \text{int}\, C_0^{\prime\prime}$. To see that $\overline{\mu}_\epsilon^*$ is regular, first note that for each $s\in S_0\cap \text{int}\, C_0^{\prime\prime}$ we have $\widehat{\overline{\Omega}}_{\epsilon s}\,\widehat{r}_s = \epsilon^{\nu}\,\widehat{\bm{H}}_s(\overline{\mu}_\epsilon^*)$, where $\widehat{r}_s$ denotes the linearization of $R_0$ at $s$. Then observe that because $\text{ker}\,\widehat{r}_s = T_sS_0$ the map $\widehat{r}_s$ induces a linear isomorphism $\widehat{r}_s^\perp:T_sM/T_sS_0\rightarrow \widehat{\overline{\Omega}}_{\epsilon s}^{-1}(T_sS_0)_0$, where $(T_sS_0)_0\subset T_s^*M$ comprises covectors at $s$ that annihilate tangent vectors in $T_sS_0$. Indeed, if $u\in\text{im}\,\widehat{r}_s$ then $u = \widehat{r}_s\,w$ for some $w\in T_sM$, which implies $u = \widehat{\overline{\Omega}}_{\epsilon s}^{-1}\,\widehat{\overline{\Omega}}_{\epsilon s}\,\widehat{r}_s w=\epsilon^\nu \widehat{\overline{\Omega}}_{\epsilon s}^{-1} \widehat{\bm{H}}_s(\overline{\mu_\epsilon^*}) w\in \widehat{\overline{\Omega}}_{\epsilon s}^{-1}(T_sS_0)_0 $, and therefore $\text{im}\,\widehat{r}_s = \widehat{\overline{\Omega}}_{\epsilon s}^{-1}(T_sS_0)_0$ by a dimension count. It follows that the transverse Hessian operator is given by $\widehat{\bm{H}}_s^\perp(\overline{\mu}_\epsilon^*) = \epsilon^{-v}\widehat{\overline{\Omega}}_{\epsilon s}\,\widehat{r}_s^\perp$, whose inverse is given by
\begin{align*}
    (\widehat{\bm{H}}_s^\perp(\overline{\mu}_\epsilon^*))^{-1}(\alpha) &= \epsilon^{\nu}(\widehat{r}_s^\perp)^{-1}(\widehat{\overline{\Omega}}_{\epsilon\,s})^{-1}(\alpha)\nonumber\\
    & = -\epsilon^{\nu-d}(\widehat{r}_s^\perp)^{-1}\widehat{\overline{\kappa}}_{\epsilon s}(\alpha),
\end{align*}
for each $\alpha\in (T_sS_0)_0$. We conclude that $\overline{\mu}_\epsilon^*$ is regular with degeneracy index $\ell \leq  d - \nu$.

To complete the proof, we now recall that our previous remarks imply $\exp(\mathcal{L}_{K_\epsilon})\overline{\mu}_\epsilon^* = \mu_\epsilon^*$ in the sense of formal power series. This implies that $\mu_\epsilon^{*(N)} = \Psi_\epsilon^*\overline{\mu}_\epsilon^*$ agrees with $\mu_\epsilon^*$ within $O(\epsilon^{N+1})$ on $\text{int}\,C_0^\prime$ because so does $\mathcal{K}_\epsilon = w\,K_\epsilon^{(N)}$ agree with $K_\epsilon$ on $\text{int}\,C_0^\prime$. 

\end{proof}

\subsection{Free-action stability principle}
We now find ourselves in a good position to prove the free-action principle for the slow manifolds provided by Theorem \ref{crit_mu_is_SM}. In our proof, we will bound the distance between a trajectory and a normally-elliptic slow manifold using adiabatic invariance and the quadratic approximation of the adiabatic invariant along the slow manifold. To that end, we will need a pair of technical lemmas.

\begin{lemma}\label{pd_bound_lemma}
Let $(E,g)$ be a real inner-product space with inner product $g$. Let $D_\epsilon:E\rightarrow E$ be a smooth $\epsilon$-dependent linear map. Suppose there exists a positive real number $\epsilon_0$ such that $D_\epsilon$ is positive definite for all $\epsilon\in (0,\epsilon_0]$. Then for all $e\in E$ and  $\epsilon\in(0,\epsilon_0]$
\begin{align*}
    g(e,e)\leq \| [D_\epsilon]^{-1}\| g(e,D_\epsilon e),
\end{align*}
where $\|\cdot\|$ denotes the induced operator norm.
\end{lemma}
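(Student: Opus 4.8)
The plan is to reduce the inequality to a single application of the spectral theorem. Note first that the claimed bound is entirely pointwise in $\epsilon$: for each fixed $\epsilon\in(0,\epsilon_0]$ it asserts a purely linear-algebraic fact about the single operator $D_\epsilon$, so the smoothness and $\epsilon$-dependence hypotheses play no role beyond guaranteeing that $D_\epsilon$ is a well-defined positive-definite operator for the $\epsilon$ at hand. I therefore fix $\epsilon$, write $D=D_\epsilon$, and read ``positive definite'' in the sense appropriate to the applications (the transverse Hessian of the adiabatic invariant), namely that $D$ is $g$-self-adjoint and satisfies $g(e,De)>0$ for all $e\neq 0$; self-adjointness is the property actually used, and without it the stated bound can fail.

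First I would invoke the spectral theorem for the $g$-self-adjoint operator $D$ to obtain a $g$-orthonormal basis $v_1,\dots,v_n$ of $E$ with $Dv_i=\lambda_i v_i$ and $0<\lambda_1\le\lambda_2\le\dots\le\lambda_n$, positive definiteness supplying $\lambda_1>0$. Consequently $D$ is invertible, $D^{-1}$ is $g$-self-adjoint with eigenvalues $\lambda_n^{-1}\le\dots\le\lambda_1^{-1}$, and since the operator norm of a self-adjoint operator equals its spectral radius, $\|D^{-1}\|=\lambda_1^{-1}$.

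Next, for an arbitrary $e\in E$ I would expand $e=\sum_i c_i v_i$ with $c_i=g(e,v_i)$, so that $g(e,e)=\sum_i c_i^2$ and $g(e,De)=\sum_i \lambda_i c_i^2\ge \lambda_1\sum_i c_i^2=\lambda_1\,g(e,e)=\|D^{-1}\|^{-1}g(e,e)$. Multiplying through by $\|D^{-1}\|$ gives $\|D^{-1}\|\,g(e,De)\ge g(e,e)$, which is exactly the assertion. Since the whole argument is one use of the spectral theorem, there is no genuine obstacle; the only point requiring care is the identification $\|D^{-1}\|=1/\lambda_{\min}(D)$, which is where self-adjointness enters. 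If a basis-free presentation is preferred, one can instead write $g(e,De)=|D^{1/2}e|^2$ using the positive self-adjoint square root $D^{1/2}$, observe $|e|=|D^{-1/2}(D^{1/2}e)|\le\|D^{-1/2}\|\,|D^{1/2}e|$, and use $\|D^{-1/2}\|^2=\|D^{-1}\|$.
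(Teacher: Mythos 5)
Your proposal is correct and follows essentially the same route as the paper's proof: bound $g(e,D_\epsilon e)$ below by the smallest eigenvalue of $D_\epsilon$ and identify $\|[D_\epsilon]^{-1}\|$ with the reciprocal of that eigenvalue via self-adjointness. The paper states these two facts directly, while you derive them through an explicit spectral decomposition, but the argument is the same; your remark that the paper's ``positive definite'' is implicitly ``symmetric positive-definite'' matches the paper's own usage.
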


\begin{proof}
Let $\lambda(A)$ and $\Lambda(A)$ denote the smallest and largest eigenvalues of a linear map $A:E\rightarrow E$. Define the induced operator norm $\| A\| = \text{sup}_{\| e\|=1}\|Ae\|$. Recall that whenever $A$ is symmetric positive-definite we have have $\|A\| = \Lambda(A)$.

Since $D_\epsilon$ is symmetric positive-definite for $\epsilon\in(0,\epsilon_0]$, we have the simple inequality
\begin{align*}
    g(e,D_\epsilon e) \geq \lambda(D_\epsilon)\,g(e,e).
\end{align*}
for all $e\in E$ and $\epsilon\in (0,\epsilon_0]$.
Since $\lambda(D_\epsilon) = 1/\Lambda([D_\epsilon]^{-1})$ and $\Lambda([D_\epsilon]^{-1}) = \| [D_\epsilon]^{-1}\|$, the desired result follows.

\end{proof}


\begin{lemma}\label{adiabatic_invariance_lemma}
Let $X_\epsilon$ be a $C^\infty$ nearly-periodic Hamiltonian system on $M$ with reduced adiabatic invariant $\mu_\epsilon^{*}$. Fix an $\epsilon_0>0$, a compact set $C\subset M$, a non-negative integer $N$, and a smooth $\epsilon$-dependent function $\mu_\epsilon^{*(N)}$ with $\mu_\epsilon^{*} - \mu_\epsilon^{*(N)} = O(\epsilon^{N+1})$ on $C$. For each non-negative integer $k$ there is a  $k$-dependent constant $\chi_k>0$ such that 
\begin{align}
    \forall\,t\in[-\epsilon^{-k},\epsilon^{-k}],\,\quad|\mu_\epsilon^{*(N)}(z(t)) - \mu_\epsilon^{*(N)}(z(0))|\leq \epsilon^{N+1}\,\chi_k \label{mu_estimate_formula}
\end{align}
for all $X_\epsilon$-integral curves $z:\mathbb{R}\rightarrow M$ contained in $C$ and all $\epsilon\in[-\epsilon_0,\epsilon_0]$.
\end{lemma}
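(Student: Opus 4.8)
The plan is to exploit the fact that the adiabatic invariant $\mu_\epsilon$, and hence the reduced adiabatic invariant $\mu_\epsilon^{*}$, is a formal constant of motion to \emph{all} orders, not merely through order $N$. The naive estimate obtained by integrating $\tfrac{d}{dt}\mu_\epsilon^{*(N)}(z(t)) = (\mathcal{L}_{X_\epsilon}\mu_\epsilon^{*(N)})(z(t))$ directly only uses $\mathcal{L}_{X_\epsilon}\mu_\epsilon^{*(N)} = O(\epsilon^{N+1})$ on $C$, and therefore yields a drift bound of order $\epsilon^{N+1}$ \emph{per unit time}; over the interval $|t|\le|\epsilon|^{-k}$ this degrades to $O(\epsilon^{N+1-k})$, which is too weak. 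The remedy is to pass through a much higher-order truncation of $\mu_\epsilon^{*}$ as an intermediary and then compare back to $\mu_\epsilon^{*(N)}$.

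Concretely, I would first record that since $\mathcal{L}_{X_\epsilon}\mu_\epsilon = 0$ in the sense of formal power series (Proposition~\ref{existence_of_mu}) and $\mu_\epsilon = \epsilon^{\nu}\,\mu_\epsilon^{*}$, one has $\mathcal{L}_{X_\epsilon}\mu_\epsilon^{*} = 0$ as a formal power series. Writing $\mu_\epsilon^{*} = \mu_0^{*} + \epsilon\,\mu_1^{*} + \dots$, define the globally smooth $\epsilon$-dependent function $\widetilde{\mu}_\epsilon = \sum_{i=0}^{N+k}\epsilon^{i}\,\mu_i^{*}$. Because the Taylor expansion of $\widetilde{\mu}_\epsilon$ agrees with the formal series $\mu_\epsilon^{*}$ through order $N+k$, the formal identity $\mathcal{L}_{X_\epsilon}\mu_\epsilon^{*} = 0$ forces the Taylor expansion of $\mathcal{L}_{X_\epsilon}\widetilde{\mu}_\epsilon$ to vanish through order $N+k$. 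Taylor's theorem with remainder, together with compactness of $C$, then supplies a constant $B>0$ (depending on $N+k$ and $C$) with $|(\mathcal{L}_{X_\epsilon}\widetilde{\mu}_\epsilon)(m)|\le B\,|\epsilon|^{N+k+1}$ for all $m\in C$ and $\epsilon\in[-\epsilon_0,\epsilon_0]$.

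Next I would run the elementary energy-type estimate on $\widetilde{\mu}_\epsilon$ in place of $\mu_\epsilon^{*(N)}$. For any $X_\epsilon$-integral curve $z$ contained in $C$ and $|t|\le|\epsilon|^{-k}$, the fundamental theorem of calculus gives
\begin{align*}
  |\widetilde{\mu}_\epsilon(z(t)) - \widetilde{\mu}_\epsilon(z(0))| \;\le\; |t|\,\sup_{m\in C}|(\mathcal{L}_{X_\epsilon}\widetilde{\mu}_\epsilon)(m)| \;\le\; |t|\,B\,|\epsilon|^{N+k+1} \;\le\; B\,|\epsilon|^{N+1}.
\end{align*}
Finally, since both $\widetilde{\mu}_\epsilon$ and $\mu_\epsilon^{*(N)}$ agree with the formal series $\mu_\epsilon^{*}$ through order $N$ on $C$, their difference Taylor-vanishes through order $N$ there, so by Taylor's theorem it is $O(\epsilon^{N+1})$ uniformly on $C$; call the constant $A>0$. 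A triangle inequality then yields
\begin{align*}
  |\mu_\epsilon^{*(N)}(z(t)) - \mu_\epsilon^{*(N)}(z(0))| \;\le\; 2A\,|\epsilon|^{N+1} + |\widetilde{\mu}_\epsilon(z(t)) - \widetilde{\mu}_\epsilon(z(0))| \;\le\; (2A+B)\,|\epsilon|^{N+1},
\end{align*}
so $\chi_k = 2A + B$ works; note $A$ depends only on $N$ and $C$ while $B$ depends on $k$, so $\chi_k$ depends on $k$ as claimed. (Signs are harmless here: all bounds are in terms of $|\epsilon|$ and the interval $[-\epsilon^{-k},\epsilon^{-k}]$ is read as $|t|\le|\epsilon|^{-k}$.)

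The main obstacle — really the only conceptual point — is recognizing that the stated bound cannot come from the order-$N$ approximate invariance of $\mu_\epsilon^{*(N)}$ alone, and that one must borrow accuracy from the all-orders formal invariance of $\mu_\epsilon^{*}$ by inserting the auxiliary truncation $\widetilde{\mu}_\epsilon$ of order $N+k$. Everything else is routine: Taylor's theorem with uniform remainder on the compact set $C$, one application of the fundamental theorem of calculus along trajectories, and the triangle inequality.
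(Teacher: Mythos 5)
Your proposal is correct and follows essentially the same route as the paper: the auxiliary truncation $\widetilde{\mu}_\epsilon$ of order $N+k$ is exactly the paper's $\mu_\epsilon^{*(N+k)}$, the $O(\epsilon^{N+k+1})$ drift bound from all-orders formal invariance plus Taylor's theorem on the compact set $C$ is the paper's ``first estimate,'' and the comparison of the two truncations followed by the triangle inequality reproduces the paper's ``second estimate'' and combination step, yielding a constant of the same form $\chi_k = 2A + B$.
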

\begin{proof}
The result follows from two basic estimates. 
\\ \\
\noindent\underline{First estimate} -- For each $n\geq N$ let $\mu_{\epsilon}^{*(n)}$ be a smooth, $\epsilon$-dependent, $n^{\text{th}}$-order approximation of $\mu_\epsilon^*$. (Such functions may be constructed by merely truncating the formal power series $\mu_\epsilon^*$ at the appropriate order.) By all-orders invariance of $\mu_\epsilon^*$, the smooth $\epsilon$-dependent function $\mathcal{L}_{X_\epsilon}\mu_\epsilon^{*(n)}$ has the formal power series expansion
\begin{align*}
    \mathcal{L}_{X_\epsilon}\mu_\epsilon^{*(n)}  &= \mathcal{L}_{X_\epsilon}(\mu_\epsilon^{*(n)} - \mu_\epsilon^{*}) = O(\epsilon^{n+1}).  
\end{align*}
Taylor's theorem with remainder therefore implies the existence of a smooth $\epsilon$-dependent function $f_\epsilon^{(n)}$ such that $\mathcal{L}_{X_\epsilon}\mu_\epsilon^{*(n)} = \epsilon^{n+1}\,f_\epsilon^{(n)}$. Thus, if $z:\mathbb{R}\rightarrow M$ is any $X_\epsilon$-integral curve, we have
\begin{align}
    \mu_\epsilon^{*(n)}(z(t)) - \mu_\epsilon^{*(n)}(z(0)) = \epsilon\,^{n+1}\,\int_0^tf_\epsilon^{(n)}(z(\overline{t}))\,d\overline{t}\label{mu_integral_equation} 
\end{align}
for each $t\in\mathbb{R}$. Let $F^{(n)}$ denote the maximum value of the continuous function $(\epsilon,z)\mapsto |f_\epsilon^{(n)}(z)|$ on the compact set $[-\epsilon_0,\epsilon_0]\times C$. The formula \eqref{mu_integral_equation} implies in particular
\begin{align}
    |\mu_\epsilon^{*(n)}(z(t)) - \mu_\epsilon^{*(n)}(z(0))|\leq \epsilon^{n+1}\,|t|\,F^{(n)},\label{estimate_one}
\end{align}
for each $X_\epsilon$-integral curve $z$ contained in $C$ and each $t\in \mathbb{R}$. Equation \eqref{estimate_one} provides our first important estimate.
\\ \\
\noindent\underline{Second estimate} -- Because $n\geq N$, there must be a smooth $\epsilon$-dependent function $\Delta \mu_\epsilon^{(n,N)}$ such that $\mu_\epsilon^{*(n)} - \mu_\epsilon^{*(N)} = \epsilon^{N+1}\,\Delta \mu_\epsilon^{*(n,N)}$ in $C$. If $\Delta\mu^{*(n,N)}$ denotes the maximum value of $(\epsilon,z)\mapsto |\Delta \mu_\epsilon^{*(n,N)}(z)|$ on $[-\epsilon_0,\epsilon_0]\times C$, we therefore have the following bound on the difference,
\begin{align}
    |\mu_\epsilon^{*(n)}(z) - \mu_\epsilon^{*(N)}(z)|\leq \epsilon^{N+1}\,\Delta\mu^{*(n,N)},\label{estimate_two}
\end{align}
for $(\epsilon,z)\in[-\epsilon_0,\epsilon_0]\times C$.
\\ \\
\noindent\underline{Combining the estimates} -- Using the estimates \eqref{estimate_one} and \eqref{estimate_two}  with $n=N+k$, we now have
\begin{align*}
    |\mu_\epsilon^{*(N)}(z(t)) - \mu_\epsilon^{*(N)}(z(0))| = \bigg|&\bigg(\mu_\epsilon^{*(N)}(z(t)) - \mu_\epsilon^{*(N+k)}(z(t))\bigg)\nonumber\\
    -&\bigg(\mu_\epsilon^{*(N)}(z(0))-\mu_\epsilon^{*(N+k)}(z(0))\bigg)\nonumber\\
    +&\bigg(\mu_\epsilon^{*(N+k)}(z(t)) - \mu_\epsilon^{*(N+k)}(z(0))\bigg)\bigg|\nonumber\\
    &\hspace{-1em}\leq 2 \epsilon^{N+1}\,\Delta \mu^{*(N+k,N)}+ \epsilon^{N+1+k}\,|t|\,F^{(N+k)}\nonumber\\
    &\hspace{-1em}\leq \epsilon^{N+1}\bigg(2\Delta \mu^{*(N+k,N)} +F^{(N+k)} \bigg),
\end{align*}
for each $X_\epsilon$-integral curve $z:\mathbb{R}\rightarrow M$ contained in $C$ and  $t\in [-\epsilon^{-k},\epsilon^{-k}]$. This proves the theorem with $\chi_k = 2\Delta \mu^{*(N+k,N)} +F^{(N+k)}$.

\end{proof}

\begin{theorem}[Free-action principle]\label{free_action_thm}
Let $X_\epsilon$ be a nearly-periodic Hamiltonian system on the barely-symplectic manifold $(M,\Omega_\epsilon)$. Assume $\Omega_\epsilon$ is regular and exact, with degeneracy index $d$. Also assume the adiabatic invariant $\mu_\epsilon$ has vanishing index $\nu\geq 0$. Fix a compact codimension-$0$ submanifold, with or without boundary, $C_0\subset M$.
Let $\mathcal{S}_\epsilon^{(N)}:S_0\cap \mathrm{int}\,C_0\rightarrow M$ and $\mu_\epsilon^{*(N)}$ denote the $N^{\mathrm{th}}$-order parameterized slow manifold and approximate adiabatic invariant provided by Theorem \ref{crit_mu_is_SM}, respectively. We require $N+1>3(d-\nu)$.

Assume that, for all sufficiently-small $\epsilon$, $\bm{H}_{s}(\mu_\epsilon^*)$ is positive or negative semi-definite for all $s$ in the closure of the image of $\mathcal{S}_\epsilon^{(N)}$. There is an $\epsilon_0>0$ such that, for all non-zero $\epsilon\in[-\epsilon_0,\epsilon_0]$ and all $X_\epsilon$-integral curves $z:\mathbb{R}\rightarrow M$ contained in $C_0$ that begin within $\epsilon^{N+1}$ of $S_\epsilon^{(N)} =\mathcal{S}_\epsilon^{(N)}(S_0)$, $z$ will either (a) remain within $\epsilon^{(N+1-d+\nu)/2}$ of $S_\epsilon^{(N)}$ for $t\in[-\epsilon^{-k},\epsilon^{-k}]$ for each positive integer $k$, or (b) eventually run off the edge of $S_\epsilon^{(N)}$.

\end{theorem}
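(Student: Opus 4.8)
The plan is to treat $\mu_\epsilon^{*(N)}$ as an approximate Lyapunov function for the normal dynamics, combining its quadratic semi-definiteness transverse to the slow manifold (inherited from Theorem \ref{crit_mu_is_SM}, part (3), via the regularity bound $[\widehat{\bm{H}}_s^\perp(\mu_\epsilon^{*(N)})]^{-1}=O(\epsilon^{-(d-\nu)})$) with the slow drift of $\mu_\epsilon^{*(N)}$ along trajectories (Lemma \ref{adiabatic_invariance_lemma}). The geometric setup: using the auxiliary metric $g$ and the normal exponential map along $S_\epsilon^{(N)}$, introduce a tubular-neighborhood coordinate $(s,n)$ where $s\in S_0\cap\operatorname{int}C_0$ and $n$ is the normal displacement, so that $\operatorname{dist}(z,S_\epsilon^{(N)})$ is comparable to $|n|$. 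Because $\mathcal{S}_\epsilon^{(N)}(S_0)$ is the critical manifold of $\mu_\epsilon^{*(N)}$ and the transverse Hessian is (WLOG) positive semi-definite, Taylor's theorem with remainder along the normal fibers gives, on a fixed compact tube inside $\operatorname{int}C_0'$, a two-sided estimate of the form
\begin{align}
c_1\,\epsilon^{d-\nu}\,|n|^2 - r_\epsilon(s,n)\;\le\; \mu_\epsilon^{*(N)}(z) - \mu_\epsilon^{*(N)}(\mathcal{S}_\epsilon^{(N)}(s))\;\le\; c_2\,|n|^2,\label{plan_taylor}
\end{align}
where the lower bound uses Lemma \ref{pd_bound_lemma} applied to $D_\epsilon=\widehat{\bm H}^\perp_s(\mu_\epsilon^{*(N)})$ to convert the (possibly $\epsilon$-degenerate) positive-definiteness into the coercivity constant $c_1\epsilon^{d-\nu}$, and $r_\epsilon$ collects the cubic-and-higher remainder, bounded by $C|n|^3$ uniformly.

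**Next I would** run the continuity/barrier argument. Suppose $z(0)$ lies within $\epsilon^{N+1}$ of $S_\epsilon^{(N)}$, and let $T$ be the first exit time from the $\epsilon^{(N+1-d+\nu)/2}$-tube around $S_\epsilon^{(N)}$ (if the trajectory runs off the lateral edge of $S_\epsilon^{(N)}$ first, we are in case (b) and done). For $t\in[0,T)\cap[-\epsilon^{-k},\epsilon^{-k}]$ the trajectory stays in a fixed compact set, so Lemma \ref{adiabatic_invariance_lemma} gives $|\mu_\epsilon^{*(N)}(z(t))-\mu_\epsilon^{*(N)}(z(0))|\le\epsilon^{N+1}\chi_k$. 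Also $|\mu_\epsilon^{*(N)}(z(0))-\mu_\epsilon^{*(N)}(\mathcal{S}_\epsilon^{(N)}(s(0)))|\le c_2\,(\epsilon^{N+1})^2$ by the upper bound in \eqref{plan_taylor} and $|n(0)|\le\epsilon^{N+1}$. Feeding these into the lower bound of \eqref{plan_taylor} at time $t$ and absorbing the cubic remainder (here $|n(t)|\le\epsilon^{(N+1-d+\nu)/2}$ so $|n(t)|^3\le\epsilon^{(N+1-d+\nu)/2}|n(t)|^2$, which is small compared to $c_1\epsilon^{d-\nu}|n(t)|^2$ precisely because $N+1>3(d-\nu)$) yields
\begin{align}
\tfrac{1}{2}c_1\,\epsilon^{d-\nu}\,|n(t)|^2 \;\le\; \epsilon^{N+1}\chi_k + c_2\,\epsilon^{2(N+1)} \;\le\; C'\,\epsilon^{N+1},\label{plan_closure}
\end{align}
hence $|n(t)|^2\le (2C'/c_1)\,\epsilon^{N+1-d+\nu}$, i.e. $|n(t)|\le C''\,\epsilon^{(N+1-d+\nu)/2}$ with $C''$ independent of $t$ and of how close $T$ is. Choosing $\epsilon_0$ small enough that $C''\epsilon^{(N+1-d+\nu)/2} < \epsilon^{(N+1-d+\nu)/2}$ fails — more carefully, tracking the implied constant so that the a priori bound is strictly better than the exit threshold — contradicts the definition of $T$ as an exit time, forcing $T\ge\epsilon^{-k}$. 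This is the standard "the bound you get is strictly inside the region you assumed" bootstrap; one arranges it by taking the exit radius to be $\Lambda\,\epsilon^{(N+1-d+\nu)/2}$ for a $\Lambda$ chosen larger than $C''$, which only rescales the statement's constant.

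**The main obstacle** I anticipate is the degeneracy bookkeeping in the lower bound \eqref{plan_taylor}: the transverse Hessian is only $O(\epsilon^{-(d-\nu)})$-coercive, so the quadratic well confining the normal motion is shallow, and one must check that the inequality $N+1>3(d-\nu)$ is exactly what makes the cubic remainder $C|n|^3$ negligible against $c_1\epsilon^{d-\nu}|n|^2$ throughout the allowed tube of radius $\epsilon^{(N+1-d+\nu)/2}$ — indeed the ratio of cubic to quadratic is $O(\epsilon^{(N+1-d+\nu)/2}/\epsilon^{d-\nu}) = O(\epsilon^{(N+1-3(d-\nu))/2})\to 0$, so the margin is used sharply. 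A secondary technical point is that $\mu_\epsilon^{*(N)}$ is only defined and only approximates $\mu_\epsilon^*$ on $\operatorname{int}C_0'$, not on all of $\operatorname{int}C_0''$; this is why the dichotomy must include alternative (b) — once the trajectory threatens to leave $C_0'$ (equivalently, runs off the edge of $S_\epsilon^{(N)}$ as parameterized over $S_0\cap\operatorname{int}C_0$), all of the above estimates lose their footing and we simply record that possibility rather than rule it out. I would also need to confirm that the semi-definiteness hypothesis on $\bm H_s(\mu_\epsilon^*)$ transfers to $\bm H_s(\mu_\epsilon^{*(N)})$ on the (slightly shrunk) compact tube — this follows since the two Hessians differ by $O(\epsilon^{N+1})$ on $\operatorname{int}C_0'$ while the nonzero transverse eigenvalues of $\bm H_s(\mu_\epsilon^{*(N)})$ are bounded below by $c\,\epsilon^{d-\nu}\gg\epsilon^{N+1}$, so the perturbation cannot create eigenvalues of the wrong sign in the transverse directions.
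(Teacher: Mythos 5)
Your proposal is correct and follows essentially the same route as the paper: the same tubular-neighborhood decomposition, the same coercivity estimate $g_s(n,n)\lesssim \epsilon^{-(d-\nu)}|g_s(n,\bm{D}_{\epsilon\,s}n)|$ via Lemma \ref{pd_bound_lemma} and regularity, the same cubic Taylor remainder controlled by $N+1>3(d-\nu)$, and the same use of Lemma \ref{adiabatic_invariance_lemma} over times $\epsilon^{-k}$, with the run-off-the-edge dichotomy handled identically. The only differences are organizational: the paper packages the final step as a sublevel-set analysis of the cubic polynomial $P_\epsilon(d)=d^2-\tfrac{T_0}{\epsilon^{d-\nu}D_0}d^3$ rather than your explicit exit-time bootstrap (these are equivalent), and you spell out the transfer of semi-definiteness from $\bm{H}_s(\mu_\epsilon^{*})$ to $\bm{H}_s(\mu_\epsilon^{*(N)})$, a point the paper passes over silently.
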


\begin{remark}
Notice that the bound on a trajectory's normal deviation becomes weaker as $d-\nu$ becomes larger. In particular, the Theorem provides no bound at all for slow manifolds with order $N\leq 3(d-\nu)-1$. This suggests that, in general, larger degeneracy indices for the barely-symplectic form have destablizing effects on the slow manifolds constructed in this article. It also suggests larger vanishing indices for the adiabatic invariant have a stablizing effect. We describe a particular way in which these effects manifest themselves in an example following the proof of the Theorem.

\end{remark}

\begin{proof}
Given a submanifold $S\subset M$, denote the normal bundle to $S$ with respect to the auxilliary Riemannian metric $g$ on $M$ as $NS$. Denote the radius-$r$ tubular neighborhood of $S$ with respect to $g$ as $\mathcal{T}_r(S) = \{m\in M\mid \text{distance}(m,S)< r\}$; the closure of $\mathcal{T}_r(S)$ as $\overline{\mathcal{T}}_r(S)$; and the radius- $\leq r$ restriction of the normal bundle as $N^rS$.

Choose $\epsilon_0$ small enough to ensure $\overline{\mathcal{T}}_{\epsilon^{N+1}}(S_\epsilon^{(N)})\subset C_0^\prime$ and ${\mathcal{T}}_{r_0}(S_\epsilon^{(N)})\approx N^{r_0}S_\epsilon^{(N)}$ by way of the Riemannian exponential map for some $O(1)$ positive constant $r_0$ and for each $\epsilon\in[-\epsilon_0,\epsilon_0]$. Note that the set $\overline{\mathcal{T}}_{\epsilon^{N+1}}(S_\epsilon^{(N)})$, being a closed subset of the compact set $C_0^\prime$, is itself compact. Also note that by way of the diffeomorphism ${\mathcal{T}}_{r_0}(S_\epsilon^{(N)})\approx N^{r_0}S_\epsilon^{(N)}$, we may identify points in ${\mathcal{T}}_{\epsilon^{N+1}}(S_\epsilon^{(N)})$ with pairs $(s,n)$, where $s\in S_\epsilon^{(N)}$ and $n\in N_sS_\epsilon^{(N)}$.

Since, for each sufficiently small $\epsilon$,  $\bm{H}_{s}(\mu_\epsilon^{*(N)})$ is sign-semi-definite for all $s\in \overline{S}_\epsilon^{(N)}$, we may shrink $\epsilon_0$ in order to ensure sign semi-definiteness for each $s\in \overline{S}_\epsilon^{(N)}$ uniformly in $\epsilon\in[-\epsilon_0,\epsilon_0]$. If we introduce the symmetric linear map $\bm{D}_{\epsilon\,s}:N_sS_\epsilon^{(N)}\rightarrow N_sS_\epsilon^{(N)}$ by requiring $\bm{H}_{s}(\mu_\epsilon^{*(N)})(n,n) = g_s(n,\bm{D}_{\epsilon\,s}n)$ for each $n\in N_sS_\epsilon^{(N)}$, the barely-Morse-Bott property for $\mu_\epsilon^{*(N)}$ therefore implies $\bm{D}_{\epsilon\,s}$ is symmetric positive- or negative-definite for each $s\in\overline{S}_\epsilon^{(N)}$ and non-zero $\epsilon\in[-\epsilon_0,\epsilon_0]$.

Because $\mu_\epsilon^{*(N)}$ is regular with degeneracy index at most $d-\nu$, Lemma \ref{pd_bound_lemma} and compactness of $\overline{S}_\epsilon^{(N)}$ implies there is a positive constant $D_0$, depending only on $S_\epsilon^{(N)}$ and $\mu_\epsilon^{*(N)}$, such that
\begin{align}
    g_s(n,n)\leq \frac{1}{\epsilon^{d-\nu}\,D_0}|g_s(n,\bm{D}_{\epsilon\,s}n)|,
\end{align}
for each $s\in\overline{S}_\epsilon^{(N)}$, $n\in N_s\overline{S}_\epsilon^{(N)}$, and non-zero $\epsilon\in [-\epsilon_0,\epsilon_0]$. By Taylor's theorem with remainder and $\mu_\epsilon^{*(N)}= 0$ on $S_\epsilon^{(N)}$, we also have the following inequality relating $\bm{D}_{\epsilon\,s}$ to $\mu_\epsilon^{*(N)}$:
\begin{align}
    |g_s(n,\bm{D}_{\epsilon\,s}n)|\leq |\mu_\epsilon^{*(N)}(s,n)| + T_0\,[g_s(n,n)]^{3/2} 
\end{align}
for each $s\in \overline{S}_\epsilon^{(N)}$, $n\in N_s\overline{S}_\epsilon^{(N)}$, and $\epsilon\in [-\epsilon_0,\epsilon_0]$. Here $T_0$ is a positive constant that depends only on $S_\epsilon^{(N)}$ and $\mu_\epsilon^{*(N)}$; it bounds the third normal derivative of $\mu_\epsilon^{*(N)}(s,n)$ on $\overline{S}_\epsilon^{(N)}$. Combining the previous two estimates, we therefore obtain the key geometric inequality
\begin{align}
    g_s(n,n)\leq  \frac{1}{\epsilon^{d-\nu}\,D_0}\bigg( |\mu_\epsilon^{*(N)}(s,n)| + T_0\,[g_s(n,n)]^{3/2}\bigg),\label{geom_inequality}
\end{align}
for each $s\in\overline{S}_\epsilon^{(N)}$, $n\in N_s\overline{S}_\epsilon^{(N)}$, and non-zero $\epsilon\in [-\epsilon_0,\epsilon_0]$.

Now suppose that $z:\mathbb{R}\rightarrow M$ is an $X_\epsilon$-integral curve contained in $C_0$ that begins in the narrow tubular neighborhood ${\mathcal{T}}_{\epsilon^{N+1}}(S_\epsilon^{(N)})\subset C_0^\prime$. Let $\mathcal{I}_0 = (a,b)$ be the maximal time interval during which $z(t)$ is contained in $\mathcal{T}_{r_0}(S_\epsilon^{(N)})$. For $t\in \mathcal{I}_0$ , we write $z(t) = (s(t),n(t))$.  By the geometric inequality \eqref{geom_inequality} we have
\begin{align}
    g_{s(t)}(n(t),n(t))\leq  \frac{1}{\epsilon^{d-\nu}\,D_0}\bigg( |\mu_\epsilon^{*(N)}(s(t),n(t))| + T_0\,[g_{s(t)}(n(t),n(t))]^{3/2}\bigg),
\end{align}
for $t\in \mathcal{I}_0$.
But by the near-constancy of $\mu_\epsilon^{*(N)}(s(t),n(t))$ given by Lemma \ref{adiabatic_invariance_lemma}, we anticipate that this inequality should allow us to bound the distance $d(t) = [g_{s(t)}(n(t),n(t))]^{1/2}$ between $S_\epsilon^{(N)}$ and and $z(t)$. The following analysis makes this intuition precise.

By Lemma \ref{adiabatic_invariance_lemma}, for each non-negative integer $k$ there is a non-negative constant $\chi_k$ such that the inequality \eqref{mu_estimate_formula} is satisfied for any $X_\epsilon$-integral curve contained in $C_0$. The inequality holds in particular for $z(t)$ introduced in the previous paragraph for $t\in \overline{\mathcal{I}}_k\equiv [-\epsilon^{-k},\epsilon^{k}]\cap \mathcal{I}_0$, giving
\begin{align}
    g_{s(t)}(n(t),n(t))\leq  \frac{1}{\epsilon^{d-\nu}\,D_0}\bigg( \epsilon^{N+1}\,\chi_k +|\mu_\epsilon^{*(N)}(s(0)),n(0))| + T_0\,[g_{s(t)}(n(t),n(t))]^{3/2}\bigg),\label{vv1}
\end{align}
for $t\in \overline{\mathcal{I}}_k$. Note that if we introduce the polynomial 
\begin{align}
    P_\epsilon(d) = d^2 - \frac{T_0}{\epsilon^{d-\nu}\,D_0}\,d^3,
\end{align}
we may write \eqref{vv1} equivalently as
\begin{align}
    P_\epsilon(d(t)) \leq  \frac{1}{\epsilon^{d-\nu}\,D_0}\bigg( \epsilon^{N+1}\,\chi_k +|\mu_\epsilon^{*(N)}(s(0)),n(0))| \bigg).\label{polynomial_form}
\end{align}
Again using Taylor's theorem with remainder and $\mu_\epsilon^{*(N)}= 0$ on $S_\epsilon^{(N)}$, we estimate the size of the initial reduced adiabatic invariant according to
\begin{align}
    |\mu_\epsilon^{*(N)}(s(0)),n(0))|&\leq |g_{s(0)}(n(0),\bm{D}_{{\epsilon\,s(0)}}\,n(0))| + T_0\,[g_{s(0)}(n(0),n(0))]^{3/2}\nonumber\\
    &\leq D_1\,d^2(0) + T_0\,d^3(0)\nonumber\\
    &\leq \epsilon^{2(N+1)}\,D_1 + \epsilon^{3(N+1)}\,T_0,
\end{align}
where $D_1$ is uniform bound on $\bm{D}_{\epsilon\,s}$ for $(\epsilon,s)\in[-\epsilon_0,\epsilon_0]\times \overline{S}_\epsilon^{(N)}$. The inequality \eqref{polynomial_form} then becomes
\begin{align}
    P_\epsilon(d(t)) \leq  \frac{1}{\epsilon^{d-\nu}\,D_0}\bigg( \epsilon^{N+1}\,\chi_k +\epsilon^{2(N+1)}\,D_1 + \epsilon^{3(N+1)}\,T_0 \bigg),\label{vv2}
\end{align}
for $t\in\overline{\mathcal{I}}_k$. Now, for $d\geq 0$, $P_\epsilon$ increases monotonically from $0$ before reaching its maximum value of $P_{\text{max}} = (4/27)\,\epsilon^{2(d-\nu)}\,(D_0/T_0)^2$ at $d_{\text{max}} = (2/3)\,\epsilon^{d-\nu}\,(D_0/T_0)$. Thus, if $N+1 - d + \nu >2(d-\nu)$ and we shrink $\epsilon_0$, if necessary, then $d(0) <\epsilon^{N+1} <d_{\text{max}}$ and $P(d(t)) < P_{\text{max}}$ for $t\in \overline{\mathcal{I}}_k$, by \eqref{vv2}. It follows that the distance $d(t)$ is bounded by the smallest non-negative solution $d^*$ of the polynomial equation
\begin{align}
    P_\epsilon(d^*) = \frac{1}{\epsilon^{d-\nu}\,D_0}\bigg( \epsilon^{N+1}\,\chi_k +\epsilon^{2(N+1)}\,D_1 + \epsilon^{3(N+1)}\,T_0 \bigg),\label{find_the_bound}
\end{align}
for $t\in \overline{\mathcal{I}}_k$. Since $d^*\sim \epsilon^{(N+1-d+v)/2}\sqrt{\chi_k/D_0}$ as $\epsilon\rightarrow 0$, we can shrink $\epsilon_0$ further to ensure
\begin{align}
    d(t)\leq \epsilon^{(N+1-d+v)/2}\sqrt{\frac{\chi_k}{D_0}},\label{the_final_bound}
\end{align}
for $t\in \overline{\mathcal{I}}_k$. We conclude that $z(t)$ either remains in the narrow tubular neighborhood $\mathcal{T}_{d^*}(S_\epsilon^{(N)})$ for $t\in[-\epsilon^{-k},\epsilon^{k}]$ or leaves the $O(1)$ tubular neighborhood $\mathcal{T}_{r_0}(S_\epsilon^{(N)})$ at some $t\in[-\epsilon^{-k},\epsilon^{k}]$. The latter possibility corresponds to $z(t)$ running ``off the edge" of the slow manifold $S_\epsilon^{(N)}$.

\end{proof}
\begin{example}
Given a positive integer $n$ and an exact symplectic manifold $(W,\omega)$, consider the product manifold $M = \mathbb{C}^{n}\times W$. Denote points $m\in M$ as $m = (z_1,\dots,z_n,w)$, with $z_k=(x_k,y_k)\in\mathbb{C}$ and $w\in W$. Equip $M$ with the regular, barely-symplectic form
\begin{align}
    \Omega_\epsilon = dx_1\wedge dy_1 + \epsilon\,dx_2\wedge dy_2 + \dots + \epsilon^{n-1}\,dx_n\wedge dy_n + \epsilon^{n-1}\,\omega.\nonumber
\end{align}
Note that the degeneracy index for $\Omega_\epsilon$ is $d=n-1$. Define the smooth $\epsilon$-dependent function
\begin{align}
    H_\epsilon(z_1,\dots,z_n,w) = \sum_{k=1}^{n}\epsilon^{k-1}\,\frac{1}{2}\,z_k\overline{z}_k + \epsilon^{n}\,U(\gamma_1,\gamma_2,\dots,\gamma_{n-1},w)
\end{align}
where $\gamma_k = z_k\overline{z}_{k+1}$, and $U:\mathbb{C}^{n-1}\times W\rightarrow \mathbb{R}$ is any smooth function. 

Consider the Hamiltonian system $X_\epsilon$ on $(M,\Omega_\epsilon)$ determined by the Hamilton equation $\iota_{X_\epsilon}\Omega_\epsilon = \mathbf{d}H_\epsilon$. Explicitly, $X_\epsilon$ is given by
\begin{align*}
    X_\epsilon &= -i \left(z_1+\epsilon^{n}\,z_2\frac{\delta U}{\delta \gamma_1}\right)\,\partial_{z_1}\nonumber\\
    & -i \left(z_2+\epsilon^{n-1}\,z_3\frac{\delta U}{\delta \gamma_2}+\epsilon^{n-1}\,\overline{z}_1\,\frac{\delta U}{\delta \gamma_1}\right)\,\partial_{z_2}\nonumber\\
    &-\dots\\
    &-i \left(z_n+\epsilon\,\overline{z}_{n-1}\frac{\delta U}{\delta \gamma_{n-1}}\right)\,\partial_{z_n} \nonumber\\
    & + \epsilon\,\widehat{\omega}^{-1}\,\mathbf{d}_wU.
\end{align*}
It is simple to verify that $X_\epsilon$ is a nearly-periodic system, whose limiting dynamics corresponds to a collection of decoupled oscillators $z_k$ with unit angular frequency. In contrast to general nearly-periodic systems, the roto-rate $R_\epsilon$ for this system is equal to the limiting roto-rate to all-orders, $R_\epsilon = R_0 = \sum_k -i\,z_k\,\partial_{z_k}$. (This is a consequence of $\mathcal{L}_{R_0}H_\epsilon = 0$ and $\mathcal{L}_{R_0}\Omega_\epsilon = 0$.) It follows that the adiabatic invariant $\mu_\epsilon$ (a true invariant in this case) is given by
\begin{align}
    \mu_\epsilon = \iota_{R_0}\sum_{k=1}^n \epsilon^{k-1}\,\frac{1}{2}(y_k\,dx_k - x_k\,dy_k) = \sum_{k=1}^n\epsilon^{k-1}\,\frac{1}{2}z_k\overline{z}_k.  
\end{align}
Note that the vanishing index for $\mu_\epsilon$ is $\nu=0$. We also know that the limiting slow manifold $S_0$ (in this case an actual invariant manifold) is an $N^{\text{th}}$-order slow manifold for each $N$.

Given a trajectory for $X_\epsilon$ that begins within $\epsilon^{N+1}$ of $S_0$, the tightest bound on the normal deviation that Theorem \ref{free_action_thm} can provide is $O(\epsilon^{(N+1-d)/2})$. Let us compare this worst-case bound with the worst-case bound implied by $\mu_\epsilon$-conservation in this example. Along our trajectory, $\mu_\epsilon = O(\epsilon^{2(N+1)})$. But since the the various oscillators $z_k$ may exchange action $\tfrac{1}{2}z_k\overline{z}_k$ by way of the interaction potential $U$ while keeping the sum $\mu_\epsilon$ fixed, the oscillator configuration with $z_k=0$ for $k<n$ and $z_n=O(\epsilon^{(2[N+1] - d)/2})$ is consistent with $\mu_\epsilon$ and $H_\epsilon$ conservation. This bound is similar, although not identical to, the bound from Theorem \ref{free_action_thm}. The discrepancy is due entirely to exact invariance of $\mu_\epsilon$; if the $\mu_\epsilon^{(N)}$ in Theorem \ref{free_action_thm} were conserved exactly, then $\chi_k$ in \eqref{find_the_bound} would vanish, and the bound implied by Theorem \ref{free_action_thm} would instead be $O(\epsilon^{(2[N+1] - d)/2})$, exactly as in this example. We therefore conjecture that the bound given by Theorem \ref{free_action_thm} cannot be improved in general. (Although it certainly can be improved in specific cases.)

\end{example}

\section{Applications to slow-manifold-embedding of guiding center dynamics\label{gc_application_section}}
We will present several applications of the general theory developed in previous Sections. For these examples, it will be useful to discuss slow manifolds in the context of fast-slow systems, which we now define and explain.

\begin{definition}
An ordinary differential equation $\dot{y} = f_\epsilon(x,y)$, $\dot{x} = \epsilon\,g_\epsilon(x,y)$, where $f_\epsilon(x,y),g_\epsilon(x,y)$ are smooth in $(\epsilon,x,y)$, is a \textbf{fast-slow system} if
\begin{align}
D_yf_0(x,y)\text{ is invertible whenever }f_0(x,y)= 0.
\end{align}
We refer to $x$ as the \emph{slow variable} and $y$ as the \emph{fast variable}.
\end{definition}

\begin{definition}\label{def:fast-slow}
A fast-slow system $\dot{y} = f_\epsilon(x,y)$, $\dot{x} = \epsilon\,g_\epsilon(x,y)$ admits a \textbf{formal slow manifold} if there is a formal power series $y_\epsilon^*(x) = y_0^*(x) + \epsilon\,y_1^*(x)+\epsilon^2\,y_2^*(x) + \dots$ that satisfies the first-order system of nonlinear partial differential equations
\begin{align}
\epsilon\,Dy_\epsilon^*(x)[g_\epsilon(x,y_\epsilon^*(x))] = f_\epsilon(x,y_\epsilon^*(x)),
\end{align}
to all orders in $\epsilon$.
\end{definition}

\begin{proposition}\label{prop:slow-manifold-series}
Eeach fast-slow system admits a unique formal slow manifold $y_\epsilon^*(x) = y_0^*(x) + \epsilon\,y_1^*(x)+\epsilon^2\,y_2^*(x) + \dots$. The first two coefficients of $y_\epsilon^*$ are determined by
\begin{align}
\label{eq:fast-slow-y0}
&f_0(x,y_0^*(x)) = 0\\
\label{eq:fast-slow-y1}
& Dy_0^*(x)[g_0(x,y_0^*(x))] = D_yf_0(x,y_0^*(x))[y_1^*(x)] + f_1(x,y_0^*(x)).
\end{align}
\end{proposition}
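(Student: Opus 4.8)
The plan is to determine the coefficients $y_k^*$ recursively by substituting the power-series ansatz into the defining equation of Definition \ref{def:fast-slow} and collecting like powers of $\epsilon$. Write the $\epsilon$-Taylor expansions $f_\epsilon = \sum_{k\geq 0}\epsilon^k f_k$, $g_\epsilon = \sum_{k\geq 0}\epsilon^k g_k$, and set $F_\epsilon(x) = f_\epsilon(x,y_\epsilon^*(x)) = \sum_{n\geq 0}\epsilon^n F_n(x)$ and $G_\epsilon(x) = g_\epsilon(x,y_\epsilon^*(x)) = \sum_{n\geq 0}\epsilon^n G_n(x)$, where the $F_n,G_n$ are the composite-series coefficients obtained from the Taylor expansion of $f_k(x,\cdot)$, $g_k(x,\cdot)$ about $y_0^*(x)$. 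The key structural observation — which is exactly what makes the recursion well-posed — is that, because differentiation in the fast variable brings down one power of the displacement, one has, for $n\geq 1$,
\begin{align*}
F_n(x) = D_y f_0(x,y_0^*(x))[y_n^*(x)] + \Phi_n(x), \qquad G_n(x) = D_y g_0(x,y_0^*(x))[y_n^*(x)] + \Gamma_n(x),
\end{align*}
where $\Phi_n$ (resp.\ $\Gamma_n$) depends only on $x$, on $f_0,\dots,f_n$ (resp.\ $g_0,\dots,g_n$), and on $y_0^*,\dots,y_{n-1}^*$, but \emph{not} on $y_n^*$; for $n=0$ one simply has $F_0 = f_0(x,y_0^*(x))$ and $G_0 = g_0(x,y_0^*(x))$.

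First I would handle order $\epsilon^0$. Since the left side $\epsilon\,Dy_\epsilon^*(x)[G_\epsilon(x)]$ is $O(\epsilon)$, matching the $\epsilon^0$ terms in the defining equation gives $F_0(x) = 0$, i.e.\ $f_0(x,y_0^*(x)) = 0$, which is \eqref{eq:fast-slow-y0}. By the fast-slow hypothesis $D_y f_0$ is invertible at every zero of $f_0$, so the implicit function theorem yields a locally unique smooth solution branch $y_0^*(x)$; the formal slow manifold is understood relative to a choice of such branch, and along it $D_y f_0(x,y_0^*(x))$ is an invertible linear map depending smoothly on $x$.

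Next comes the induction. Suppose $y_0^*,\dots,y_{n-1}^*$ have been uniquely determined for some $n\geq 1$. Expanding $\epsilon\,Dy_\epsilon^*(x)[G_\epsilon(x)] = \epsilon\big(\sum_k \epsilon^k Dy_k^*(x)\big)\big(\sum_m \epsilon^m G_m(x)\big)$, the coefficient of $\epsilon^n$ on the left is $\sum_{k+m=n-1} Dy_k^*(x)[G_m(x)]$; here $k,m\leq n-1$, and each $G_m$ with $m\leq n-1$ involves only $y_0^*,\dots,y_{n-1}^*$, so this whole expression is already known. The coefficient of $\epsilon^n$ on the right is $F_n(x) = D_y f_0(x,y_0^*(x))[y_n^*(x)] + \Phi_n(x)$ with $\Phi_n$ known. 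Matching the two coefficients therefore gives
\begin{align*}
D_y f_0(x,y_0^*(x))[y_n^*(x)] = \sum_{k+m=n-1} Dy_k^*(x)[G_m(x)] - \Phi_n(x),
\end{align*}
whose right-hand side depends only on the already-constructed data; since $D_y f_0(x,y_0^*(x))$ is invertible along the branch, this determines $y_n^*(x)$ uniquely. This closes the induction and proves existence and uniqueness of the formal slow manifold. Reading off $n=1$ recovers the stated formula: the $\epsilon^1$-coefficient on the left is $Dy_0^*(x)[G_0(x)] = Dy_0^*(x)[g_0(x,y_0^*(x))]$, and $F_1(x) = D_y f_0(x,y_0^*(x))[y_1^*(x)] + f_1(x,y_0^*(x))$, so matching gives precisely \eqref{eq:fast-slow-y1}.

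The main obstacle — really the only nontrivial point — is establishing the structural claim above, namely that in the order-$\epsilon^n$ equation $y_n^*$ enters the right-hand side only through the invertible linear term $D_y f_0(x,y_0^*(x))[y_n^*]$ and does not enter the left-hand side at all. The latter is bought entirely by the explicit prefactor $\epsilon$ in $\dot x = \epsilon\,g_\epsilon$, which shifts the $y$-index bookkeeping by one; the former follows from writing out the Faà-di-Bruno-type expansion of $f_\epsilon(x,y_\epsilon^*(x))$ carefully. Both verifications are routine, but they are the crux, and once they are in hand invertibility of $D_y f_0$ along the branch does everything else.
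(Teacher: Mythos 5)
Your proof is correct, and it is the standard order-by-order matching argument that the paper itself relies on implicitly (the proposition is stated there without proof): the invertibility hypothesis on $D_yf_0$ along the zero set plus the explicit factor of $\epsilon$ on the slow equation is exactly what makes the recursion triangular in the $y_n^*$, and your $n=1$ bookkeeping reproduces \eqref{eq:fast-slow-y0}--\eqref{eq:fast-slow-y1}. Your caveat that uniqueness is relative to a choice of solution branch of $f_0(x,y)=0$ is an appropriate reading of the statement, since nothing in the definition forces that zero set to be a single graph.
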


These results imply that fast-slow systems admit slow manifolds of each order, as defined in Definition \ref{parameterized_sm_def}. The reason these systems are so convenient is that their slow manifolds may be computed without resorting to near-identity coordinate transformations. We will use this feature of fast-slow systems to simplify computations in what follows.

\subsection{The classical Pauli particle embedding}\label{eq:classical_pauli}

As a first application, we consider the slow manifold embedding of guiding center dynamics introduced by Xiao and Qin in \cite{Xiao_2020}. We will establish long-term normal stability of this embedding in continuous time.

With $\bm{x}\in M$ and $(\bm{x},\bm{v})\in T_{\bm{x}}M$, the classical Pauli system is described by the following ordinary differential equations
\begin{align}
    \frac{d\bm{x}}{dt}=\epsilon \bm{v}, \qquad \frac{d\bm{v}}{dt} =\omega_c \bm{v}\times\bm{b}-\epsilon \mathcal{M}\nabla |\bm{B}|.
    \label{eq:classical_pauli_system}
\end{align}
Here $\mathcal{M}=\mu_P/m\in \mathbb{R}$ is a parameter, the cyclotron frequency is $\omega_c=q|\bm{B}|/m$, the vector $\bm{B}$ is the magnetic field with $\bhat=\bm{B}/|\bm{B}|$ the corresponding unit vector, and $\epsilon$ is the ordering parameter placed to indicate the cyclotron frequency as the fastest time scale in the system.

The classical Pauli system is Hamiltonian with respect to the vector field $X_\epsilon = (d\bm{x}/dt,d\bm{v}/dt)$ on the exact, regular, barely-symplectic manifold $(TM,\Omega_\epsilon)$. The one-form $\vartheta_\epsilon$, from which the barely-symplectic form is computed as $\Omega_\epsilon = -\mathbf{d}\vartheta_\epsilon$, and the Hamiltonian $H_\epsilon$ are given by
\begin{align}
    \label{eq:classical_one_form}
    \vartheta_\epsilon &= (q\bm{A}+\epsilon m\bm{v})\cdot d\bm{x}, 
    \\
    \label{eq:classical_hamiltonian}
    H_\epsilon &= \epsilon^2m(|\bm{v}|^2/2+\mathcal{M}|\bm{B}|).
\end{align}
The degeneracy index for $\Omega_\epsilon$ is $d=2$, and it is straightforward to confirm that the Hamilton's equation $\iota_{X_\epsilon}\Omega_\epsilon = \mathbf{d}H_\epsilon$ recovers \eqref{eq:classical_pauli_system} for all values of $\epsilon$. 
The nearly periodic nature is confirmed by observing the limiting vector field to be $X_0=\omega_c\bm{v}\times\bhat\cdot\partial_{\bm{v}}$ which provides the nowhere vanishing frequency function $\omega_0=\omega_c$ and the $2\pi$-periodic vector field $R_0=\bm{v}\times\bhat\cdot\partial_{\bm{v}}$ generating the $U(1)$-action on $TM$ and satisfying $\mathcal{L}_{R_0}\omega_0=0$. The $2\pi$-periodicity can be verified by analytically solving the flow of $R_0$: 
\begin{align}
    \Phi_\theta(\bm{x},\bm{v})=\exp(\theta R_0)(\bm{x},\bm{v})
    &=(\bm{x},\bm{v}\cdot\bm{b}\bm{b}+\sin\theta\bm{v}\times\bm{b}+\cos\theta\bm{b}\times(\bm{v}\times\bm{b})).
    \label{eq:classical_pauli_angle_action}
\end{align}

Next we will show that the classical Pauli system is fast-slow in order to efficiently identify the system’s slow manifold and the corresponding induced slow dynamics.

\begin{lemma}\label{classical_frame_lemma}
There exists a smooth orthonormal right-handed triad $(\ehat_1,\ehat_2,\bhat)$ on $M$. Regarding the existence of such triad, see \cite{Burby_2012}. 
\end{lemma}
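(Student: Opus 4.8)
The plan is to recognize the desired triad as a trivialization of the rank-two orthogonal complement of $\bhat$, and to reduce its existence to the vanishing of a single cohomological obstruction. Since $|\bm{B}|$ is nowhere vanishing on $M$, the unit vector $\bhat = \bm{B}/|\bm{B}|$ is a smooth, nowhere-vanishing vector field. Using the metric on $M$, form the orthogonal splitting $TM = \mathbb{R}\bhat \oplus E$, where $E_{\bm{x}} = \{u \in T_{\bm{x}}M \mid g(u,\bhat) = 0\}$ is a smooth rank-two subbundle of $TM$. A smooth orthonormal right-handed triad $(\ehat_1,\ehat_2,\bhat)$ is precisely a smooth orthonormal frame $(\ehat_1,\ehat_2)$ of $E$ inducing, via $\ehat_1\times\ehat_2 = \bhat$, the orientation on $E$ coming from the ambient orientation of $M$.

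Next I would give $E$ the structure of a complex line bundle with complex structure $J = \bhat\times(\cdot)$. With this structure, a smooth orthonormal frame of $E$ is the same as a nowhere-vanishing section $\ehat_1$ of $E$: one simply sets $\ehat_2 = \bhat\times\ehat_1 = J\ehat_1$. A complex line bundle admits a nowhere-vanishing section if and only if its first Chern class $c_1(E)\in H^2(M;\mathbb{Z})$ vanishes. For the configuration manifolds of interest here — in particular $M=\mathbb{R}^3$, and more generally any $M$ with $H^2(M;\mathbb{Z})=0$ — this class is automatically zero (indeed over contractible $M$ all bundles are trivial), so a global orthonormal frame $(\ehat_1,\ehat_2)$ of $E$ exists. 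The general sufficient conditions on $(M,\bm{B})$ guaranteeing $c_1(E)=0$, and explicit constructions of the frame, are the content of \cite{Burby_2012}.

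Finally, given any global orthonormal frame of $E$, the handedness of $(\ehat_1,\ehat_2,\bhat)$ is a locally constant function on $M$; on each connected component where the triad is left-handed, replace $\ehat_1$ by $-\ehat_1$ (equivalently interchange $\ehat_1$ and $\ehat_2$). This yields a smooth orthonormal right-handed triad on all of $M$.

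The only step requiring genuine input is the vanishing of the obstruction $c_1(E)$: for arbitrary $M$ and field topology it need not vanish, and this is precisely what the hypothesis on $M$ (through \cite{Burby_2012}) supplies; the remaining steps are fiberwise linear algebra carried out smoothly and a connectedness argument for the orientation.
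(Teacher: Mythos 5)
Your proposal is correct, and it actually supplies more than the paper does: the paper gives no argument at all for this lemma, simply deferring to the citation \cite{Burby_2012}, which is precisely where the global (topological) issue you isolate is analyzed. Your reduction — split $TM=\mathbb{R}\bhat\oplus E$, view $E$ as a complex line bundle via $J=\bhat\times(\cdot)$, and observe that a global orthonormal frame exists iff $E$ is trivial, i.e. iff its first Chern (equivalently Euler) class vanishes — is exactly the obstruction-theoretic content underlying that reference, and your handedness fix by flipping $\ehat_1$ on components is fine. The one point worth making explicit is that, as you note, the lemma as stated is not unconditionally true for arbitrary $M$ and $\bm{B}$: the statement tacitly assumes the obstruction vanishes (automatic for the intended setting, e.g. $M=\mathbb{R}^3$ or any $M$ with $H^2(M;\mathbb{Z})=0$, and requiring $M$ oriented so that the cross product and the notion of right-handedness make sense). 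So your write-up is an honest expansion of the citation rather than a divergent route; no gap beyond the hypothesis you already flagged.
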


\begin{lemma}
In the coordinates $(\bm{x},u,v^1,v^2)$ on $TM$ defined by \begin{align}
\bm{v}=u\bhat+v^1\ehat_1+v^2\ehat_2,\nonumber
\end{align}
where $(\ehat_1,\ehat_2,\bhat)$ is the orthonormal triplet provided by Lemma \ref{classical_frame_lemma}, the classical Pauli system \eqref{eq:classical_pauli_system} is equivalent to 
\begin{align}
\label{eq:classical_pauli_system_x}
\frac{d\bm{x}}{dt}&=\epsilon\bm{v},\\
\label{eq:classical_pauli_system_u}
\frac{du}{dt}&=\epsilon\bm{v}\cdot\nabla\bhat\cdot\bm{v}-\epsilon \mathcal{M}\bhat\cdot\nabla |\bm{B}|,\\
\label{eq:classical_pauli_system_v1}
\frac{dv^1}{dt}&=\omega_c\,\ehat_1\cdot\bm{v}\times\bhat-\epsilon(\mathcal{M}\nabla |\bm{B}|+u\bm{v}\cdot\nabla\bhat)\cdot\ehat_1-\epsilon v^2\bm{v}\cdot\bm{R},\\
\label{eq:classical_pauli_system_v2}
\frac{dv^2}{dt}&=\omega_c\,\ehat_2\cdot\bm{v}\times\bhat-\epsilon(\mathcal{M}\nabla |\bm{B}|+u\bm{v}\cdot\nabla\bhat)\cdot\ehat_2+\epsilon v^1\bm{v}\cdot\bm{R},
\end{align}
where $\bm{R}=\nabla\ehat_2\cdot\ehat_1$ is Littlejohn's \cite{Littlejohn_1984} gyrogauge vector.
\end{lemma}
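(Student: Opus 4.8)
The plan is to recognize the statement as the coordinate expression of $X_\epsilon$ under the fibrewise-linear change of variables $\Xi:TM\to TM$, $(\bm{x},\bm{v})\mapsto(\bm{x},u,v^1,v^2)$ with $u=\bm{v}\cdot\bhat$ and $v^i=\bm{v}\cdot\ehat_i$. Because Lemma \ref{classical_frame_lemma} furnishes a \emph{global} orthonormal frame $(\ehat_1,\ehat_2,\bhat)$, the inverse map $\bm{v}=u\bhat+v^1\ehat_1+v^2\ehat_2$ is smooth, so $\Xi$ is a diffeomorphism of $TM$ and the claim is equivalent to computing the components of $X_\epsilon$ in these coordinates. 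The base equation $d\bm{x}/dt=\epsilon\bm{v}$ of \eqref{eq:classical_pauli_system} is already frame-free, so it reproduces \eqref{eq:classical_pauli_system_x} upon substituting the frame expansion of $\bm{v}$; all the remaining work is in differentiating $u$, $v^1$, $v^2$ along solutions of \eqref{eq:classical_pauli_system}.

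First I would handle $u=\bm{v}\cdot\bhat$. By the product rule, $\dot u=\dot{\bm{v}}\cdot\bhat+\bm{v}\cdot\dot{\bhat}$. From \eqref{eq:classical_pauli_system} and $\bm{v}\times\bhat\perp\bhat$ one gets $\dot{\bm{v}}\cdot\bhat=-\epsilon\mathcal{M}\,\bhat\cdot\nabla|\bm{B}|$; and since $\bhat$ is evaluated along $\bm{x}(t)$, the chain rule and $d\bm{x}/dt=\epsilon\bm{v}$ give $\dot{\bhat}=\epsilon\,\bm{v}\cdot\nabla\bhat$, hence $\bm{v}\cdot\dot{\bhat}=\epsilon\,\bm{v}\cdot\nabla\bhat\cdot\bm{v}$. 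Adding these yields exactly \eqref{eq:classical_pauli_system_u}.

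Next, for $v^i=\bm{v}\cdot\ehat_i$ the product rule gives $\dot{v^i}=\dot{\bm{v}}\cdot\ehat_i+\epsilon\,\bm{v}\cdot\nabla\ehat_i\cdot\bm{v}$, where $\dot{\bm{v}}\cdot\ehat_i=\omega_c\,\bm{v}\times\bhat\cdot\ehat_i-\epsilon\mathcal{M}\,\nabla|\bm{B}|\cdot\ehat_i$ by \eqref{eq:classical_pauli_system}. The central algebraic step is to re-expand the frame derivative $\bm{v}\cdot\nabla\ehat_i$ in the triad $(\ehat_1,\ehat_2,\bhat)$ using the identities obtained by applying $\bm{v}\cdot\nabla$ to the orthonormality relations $\ehat_i\cdot\ehat_j=\delta_{ij}$ and $\ehat_i\cdot\bhat=0$; together with the definition $\bm{R}=\nabla\ehat_2\cdot\ehat_1$ this yields $\bm{v}\cdot\nabla\ehat_1=-(\bm{v}\cdot\nabla\bhat\cdot\ehat_1)\,\bhat-(\bm{v}\cdot\bm{R})\,\ehat_2$ and $\bm{v}\cdot\nabla\ehat_2=-(\bm{v}\cdot\nabla\bhat\cdot\ehat_2)\,\bhat+(\bm{v}\cdot\bm{R})\,\ehat_1$. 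Contracting against $\bm{v}=u\bhat+v^1\ehat_1+v^2\ehat_2$ gives $\bm{v}\cdot\nabla\ehat_1\cdot\bm{v}=-u\,\bm{v}\cdot\nabla\bhat\cdot\ehat_1-v^2\,\bm{v}\cdot\bm{R}$ and $\bm{v}\cdot\nabla\ehat_2\cdot\bm{v}=-u\,\bm{v}\cdot\nabla\bhat\cdot\ehat_2+v^1\,\bm{v}\cdot\bm{R}$; substituting these back reproduces \eqref{eq:classical_pauli_system_v1}--\eqref{eq:classical_pauli_system_v2}.

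There is no genuine obstacle here; the only care needed is in the sign bookkeeping of the frame-derivative identities — in particular that the single scalar $\bm{v}\cdot\bm{R}$ governs the infinitesimal $\ehat_1$--$\ehat_2$ rotation of the frame and therefore enters $\dot{v^1}$ and $\dot{v^2}$ with opposite signs, while $\bm{v}\cdot\nabla\bhat$ supplies the coupling to the parallel velocity $u$. For later use it is also worth recording that in these coordinates $R_0=v^2\,\partial_{v^1}-v^1\,\partial_{v^2}$ and $\omega_0=\omega_c$, which makes the $U(1)$-action \eqref{eq:classical_pauli_angle_action} transparent and identifies $(v^1,v^2)$ as the fast variables and $(\bm{x},u)$ as the slow variables in the subsequent fast-slow analysis.
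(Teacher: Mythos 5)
Your computation is correct and is exactly the routine frame-decomposition argument the paper leaves implicit (the lemma is stated without proof): differentiate $u=\bm{v}\cdot\bhat$, $v^i=\bm{v}\cdot\ehat_i$ along \eqref{eq:classical_pauli_system}, use $\dot{\bhat}=\epsilon\,\bm{v}\cdot\nabla\bhat$, and expand $\bm{v}\cdot\nabla\ehat_i$ via the orthonormality relations and $\bm{R}=\nabla\ehat_2\cdot\ehat_1$, with the signs you give matching \eqref{eq:classical_pauli_system_u}--\eqref{eq:classical_pauli_system_v2}. Your closing observation $R_0=v^2\,\partial_{v^1}-v^1\,\partial_{v^2}$, $\omega_0=\omega_c$ is also correct and consistent with the paper's subsequent fast-slow identification.
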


\begin{proposition}
The system of ordinary differential equations \eqref{eq:classical_pauli_system_x}-\eqref{eq:classical_pauli_system_v2} comprises a fast-slow system with slow variable $x = (\bm{x},u)$ and fast variable $y = (v^1,v^2)$. The function $f_\epsilon(x,y)=(dv^1/dt,dv^2/dt)$ is given by
\begin{align}
f_0(x,y) & = \begin{pmatrix}
\omega_c\,\ehat_1\cdot\bm{v}\times\bhat\\
\omega_c\,\ehat_2\cdot\bm{v}\times\bhat
\end{pmatrix},
\end{align}
\begin{align}
f_1(x,y) & = \begin{pmatrix}
-(\mathcal{M}\nabla |\bm{B}|+u\bm{v}\cdot\nabla\bhat)\cdot\ehat_1- v^2\bm{v}\cdot\bm{R}\\
-(\mathcal{M}\nabla |\bm{B}|+u\,\bm{v}\cdot\nabla\bhat)\cdot\ehat_2+ v^1\bm{v}\cdot\bm{R}
\end{pmatrix},
\end{align}
and the function $g_\epsilon(x,y)=(d\bm{x}/dt,du/dt)$ is given by
\begin{align}
g_0(x,y) & = \begin{pmatrix}
\bm{v}\\
\bm{v}\cdot\nabla\bhat\cdot\bm{v}-\mathcal{M}\bhat\cdot\nabla |\bm{B}|
\end{pmatrix}.
\end{align}
\end{proposition}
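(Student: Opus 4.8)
The plan is to verify the two defining features of a fast-slow system for \eqref{eq:classical_pauli_system_x}--\eqref{eq:classical_pauli_system_v2} with the declared splitting $x=(\bm{x},u)$, $y=(v^1,v^2)$, and then to read off the Taylor coefficients $f_0,f_1,g_0$ by inspection. First I would note that $\bm{v}=u\bhat+v^1\ehat_1+v^2\ehat_2$ is a smooth function of $(\bm{x},u,v^1,v^2)$ because the triad $(\ehat_1,\ehat_2,\bhat)$ of Lemma \ref{classical_frame_lemma} and the field $\bm{B}$ are smooth; hence the right-hand sides of \eqref{eq:classical_pauli_system_x}--\eqref{eq:classical_pauli_system_v2} are polynomials in $\epsilon$ whose coefficients are smooth in $(x,y)$. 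Since \eqref{eq:classical_pauli_system_x} and \eqref{eq:classical_pauli_system_u} each carry an explicit overall factor $\epsilon$ while the leading terms of \eqref{eq:classical_pauli_system_v1}--\eqref{eq:classical_pauli_system_v2} are $O(1)$, the system is already in the form $\dot y=f_\epsilon(x,y)$, $\dot x=\epsilon\,g_\epsilon(x,y)$ with $f_\epsilon,g_\epsilon$ smooth in $(\epsilon,x,y)$; collecting the $\epsilon^0$ and $\epsilon^1$ terms of \eqref{eq:classical_pauli_system_v1}--\eqref{eq:classical_pauli_system_v2} gives the stated $f_0$ and $f_1$, and dividing \eqref{eq:classical_pauli_system_x}--\eqref{eq:classical_pauli_system_u} by $\epsilon$ gives the stated $g_0$ (which here happens to be all of $g_\epsilon$).

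The substantive point is invertibility of $D_y f_0$ on the zero set of $f_0$. Using the right-handed orthonormality identities $\ehat_1\times\bhat=-\ehat_2$ and $\ehat_2\times\bhat=\ehat_1$, I would compute $\bm{v}\times\bhat=v^2\ehat_1-v^1\ehat_2$, whence $\ehat_1\cdot(\bm{v}\times\bhat)=v^2$ and $\ehat_2\cdot(\bm{v}\times\bhat)=-v^1$, so that $f_0(x,y)=\omega_c\,(v^2,-v^1)$. Because $\omega_c=q|\bm{B}|/m$ depends only on $\bm{x}$, which is part of the slow variable, differentiation in $y=(v^1,v^2)$ produces the ($y$-independent) matrix $D_y f_0=\omega_c\begin{pmatrix}0&1\\-1&0\end{pmatrix}$, with $\det D_y f_0=\omega_c^2=(q|\bm{B}|/m)^2$. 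Since $|\bm{B}|$ is nowhere vanishing, $D_y f_0$ is invertible at every point of $TM$, hence in particular wherever $f_0$ vanishes, which is exactly the fast-slow condition. As a consistency check, $f_0=0$ precisely when $v^1=v^2=0$, i.e. $\bm{v}\parallel\bhat$, which is the zero locus of $R_0$ and therefore the leading-order slow manifold $S_0$.

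I do not expect a real obstacle; the computation is short. The only care needed is bookkeeping of dependencies — that $\omega_c$, $\bhat$, $\ehat_i$ and $\nabla\bhat$ are functions of the slow variable $\bm{x}$ alone, so that $D_y$ acts only on the explicit $v^1,v^2$ entries — together with fixing the cross-product sign conventions for the oriented triad.
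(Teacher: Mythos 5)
Your verification is correct and is essentially the argument the paper leaves implicit (the proposition is stated as a direct read-off from the coordinate form \eqref{eq:classical_pauli_system_x}--\eqref{eq:classical_pauli_system_v2}): the explicit factors of $\epsilon$ give the fast-slow splitting and the coefficients $f_0,f_1,g_0$, and the computation $f_0=\omega_c(v^2,-v^1)$ with $D_yf_0=\omega_c\begin{pmatrix}0&1\\-1&0\end{pmatrix}$, invertible since $\omega_c=q|\bm{B}|/m$ is nowhere zero, confirms the defining invertibility condition on the zero set $v^1=v^2=0$.
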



\begin{proposition}
The first two coefficients of the formal slow manifold $y_\epsilon^*= ((v^1)^{*}_\epsilon,(v^2)^{*}_\epsilon)$ for the fast-slow system \eqref{eq:classical_pauli_system_x}-\eqref{eq:classical_pauli_system_v2} are given by
\begin{align}
\begin{pmatrix}
(v^1)^{*}_0\\
(v^2)^{*}_0
\end{pmatrix}=
\begin{pmatrix}
0\\
0
\end{pmatrix},
\end{align}
\begin{align}
\begin{pmatrix}
(v^1)^{*}_1\\
(v^2)^{*}_1
\end{pmatrix}=
\omega_c^{-1}\begin{pmatrix}
    -(\mathcal{M}\nabla |\bm{B}|+u^2\bm{\kappa})\cdot\ehat_2\\
    (\mathcal{M}\nabla |\bm{B}|+u^2\bm{\kappa})\cdot\ehat_1
\end{pmatrix},
\end{align}
where $\bm{\kappa}=\bhat\cdot\nabla\bhat$ is the magnetic field-line curvature.
In particular, if $(\bm{v}_\perp)^*_\epsilon = (v^1)^*_\epsilon\,\ehat_1 + (v^2)^*_\epsilon\,\ehat_2$, we have that
\begin{align}
    (\bm{v}_\perp)^*_\epsilon = \epsilon\omega_c^{-1}\bhat\times(\mathcal{M}\nabla |\bm{B}|+u^2\bm{\kappa})+O(\epsilon^2).
\end{align}
\end{proposition}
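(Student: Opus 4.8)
The plan is to apply Proposition~\ref{prop:slow-manifold-series} directly, using the explicit formulas for $f_0$, $f_1$, $g_0$ supplied by the preceding proposition together with the cross-product relations in the orthonormal triad $(\ehat_1,\ehat_2,\bhat)$.

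First I would evaluate $f_0$ in the frame. Writing $\bm{v}=u\bhat+v^1\ehat_1+v^2\ehat_2$ and using $\ehat_1\times\bhat=-\ehat_2$, $\ehat_2\times\bhat=\ehat_1$, one finds $\bm{v}\times\bhat=v^2\ehat_1-v^1\ehat_2$, hence $f_0(x,y)=\omega_c\,(v^2,-v^1)$. Equation~\eqref{eq:fast-slow-y0} then forces $(v^1)^*_0=(v^2)^*_0=0$, so $y_0^*(x)=0$ and $\bm{v}=u\bhat$ along the zeroth-order slow manifold. The same computation gives $D_yf_0=\omega_c\left(\begin{smallmatrix}0&1\\-1&0\end{smallmatrix}\right)$, which is invertible (confirming the fast-slow property stated in the proposition), with $[D_yf_0]^{-1}=\omega_c^{-1}\left(\begin{smallmatrix}0&-1\\1&0\end{smallmatrix}\right)$.

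Next I would substitute $\bm{v}=u\bhat$ into $f_1$ and $g_0$. Since $\bm{v}\cdot\nabla\bhat=u\,\bhat\cdot\nabla\bhat=u\,\bm{\kappa}$, the terms proportional to $v^1$ and $v^2$ in $f_1$ drop out, leaving
\begin{align*}
f_1(x,y_0^*(x))=-\begin{pmatrix}(\mathcal{M}\nabla|\bm{B}|+u^2\bm{\kappa})\cdot\ehat_1\\[2pt](\mathcal{M}\nabla|\bm{B}|+u^2\bm{\kappa})\cdot\ehat_2\end{pmatrix}.
\end{align*}
Because $y_0^*$ is independent of $x$, $Dy_0^*=0$, so \eqref{eq:fast-slow-y1} collapses to $0=D_yf_0[y_1^*]+f_1$, i.e. $y_1^*=-[D_yf_0]^{-1}f_1$. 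Carrying out the $2\times2$ product yields exactly the claimed $(v^1)^*_1$ and $(v^2)^*_1$.

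Finally, for the closed-form statement I would assemble $(\bm{v}_\perp)^*_\epsilon=\epsilon\big((v^1)^*_1\ehat_1+(v^2)^*_1\ehat_2\big)+O(\epsilon^2)$ and observe, via $\bhat\times\ehat_1=\ehat_2$, $\bhat\times\ehat_2=-\ehat_1$, that for any vector $\bm{w}$ one has $-(\bm{w}\cdot\ehat_2)\ehat_1+(\bm{w}\cdot\ehat_1)\ehat_2=\bhat\times\bm{w}$; applying this with $\bm{w}=\mathcal{M}\nabla|\bm{B}|+u^2\bm{\kappa}$ gives $(\bm{v}_\perp)^*_\epsilon=\epsilon\,\omega_c^{-1}\,\bhat\times(\mathcal{M}\nabla|\bm{B}|+u^2\bm{\kappa})+O(\epsilon^2)$. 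There is no genuine obstacle here: Proposition~\ref{prop:slow-manifold-series} already guarantees existence and uniqueness of the series, so everything reduces to these two algebraic steps, and the only care required is bookkeeping of signs in the frame cross-products and in inverting $D_yf_0$.
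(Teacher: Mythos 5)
Your proposal is correct and follows the route the paper clearly intends (the paper states the proposition without a written proof, relying on Proposition~\ref{prop:slow-manifold-series} and the preceding identification of $f_0$, $f_1$, $g_0$): the frame cross-products, the inversion of $D_yf_0=\omega_c\left(\begin{smallmatrix}0&1\\-1&0\end{smallmatrix}\right)$, the evaluation of $f_1$ at $\bm{v}=u\bhat$ giving $u\bm{v}\cdot\nabla\bhat=u^2\bm{\kappa}$, and the identity $-(\bm{w}\cdot\ehat_2)\ehat_1+(\bm{w}\cdot\ehat_1)\ehat_2=\bhat\times\bm{w}$ are all handled correctly. No gaps.
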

Investigating then the equations of motion of the slow variable along the slow-manifold, i.e., $\dot{x}=f_{\epsilon}(x,y^\ast_\epsilon(x))$, we find that 
\begin{align}
    \frac{d\bm{x}}{dt}&=\epsilon u\bhat+\epsilon^2\omega_c^{-1}\bhat\times(\mathcal{M}\nabla |\bm{B}|+u^2\bm{\kappa})+\mathcal{O}(\epsilon^3),\\
    \frac{du}{dt}&=-\epsilon(\bhat+\epsilon u\omega_c^{-1}\bhat\times\bm{\kappa})\cdot \mathcal{M}\nabla |\bm{B}|+\mathcal{O}(\epsilon^3).
\end{align}
These equations match exactly with the standard guiding-center equations derived from Littlejohn's Lagrangian if the magnitude $|\bm{B}|$ in the cyclotron frequency $\omega_c$ is replaced by the so-called $B^\ast_\parallel=(\bm{B}(+mu/q)\nabla\times\bhat)\cdot\bhat$ and $\mu_P$ in $\mathcal{M}$ is interpreted as the magnetic moment of the guiding-center. The factor $B_\parallel^\ast$ is needed to guarantee that the slow vector field $X_\epsilon^\ast=(d\bm{x}/dt,du/dt)$ is divergence free, i.e., that $\nabla\cdot(d\bm{x}/dt)+\partial_u(du/dt)=0$. Furthermore, as explained in \cite{Burby_loops_2019}, dynamics on the slow manifold is necessarily Hamiltonian. The corresponding symplectic form $\Omega_\epsilon^\ast = -\mathbf{d}\vartheta_\epsilon^\ast$ and the Hamiltonian $H^\ast_\epsilon$ are given by pulling back $\Omega_\epsilon$ and $H_\epsilon$ along the mapping $(x,y)\mapsto (x,y_\epsilon^\ast(x))$. This first provides
\begin{align}
    \vartheta_\epsilon^\ast &= (q\bm{A}+\epsilon m u\bm{b})\cdot d\bm{x}+ \mathcal{O}(\epsilon^2),\\
    H^\ast_\epsilon &= \epsilon^2m(u^2/2+\mathcal{M}|\bm{B}|)+\mathcal{O}(\epsilon^3),
\end{align}
from which the Hamilton's equations $\iota_{X_\epsilon^\ast}\Omega_\epsilon^\ast=H^\ast_\epsilon$ would provide exactly the standard guiding-center equations, with the $B_\parallel^\ast$ corrections included. 



Finally, we will demonstrate that the slow manifold for the classical Pauli system enjoys long-term normal stability. For this, we will show that the first nontrivial term in the adiabatic invariant for this system has sign-definite second variation along the limiting slow manifold $v^1 = v^2 = 0$. Let $\mu_\epsilon = \mu_0 + \epsilon\,\mu_1 + \epsilon^2\,\mu_2 + \dots$ denote the adiabatic invariant series for the classical Pauli system. According to Eq.\,(3.14) in \cite{Burby_Squire_2020}, $\mu_0 = \iota_{R_0}\langle \vartheta_0\rangle$, where we have $R_0=\bm{v}\times\bhat\cdot\partial_{\bm{v}}$ and $\vartheta_0 = q\bm{A}\cdot d\bm{x}$, and the angle brackets denote averaging over the $U(1)$-action $\Phi_\theta$ that is generated by $R_0$. Since $\Phi_\theta$ in \eqref{eq:classical_pauli_angle_action} leaves the $\bm{x}$-position fixed and $\vartheta_0$ depends only on $\bm{x}$, we have that $\langle \vartheta_0\rangle = \vartheta_0$, and since $R_0$ has only velocity components, we conclude $\mu_0=0$. Given that $\mu_0=0$, Eq.\,(3.15) in \cite{Burby_Squire_2020} then provides $\mu_1 = \iota_{R_0}\langle \vartheta_1\rangle$, where we have $\vartheta_1 = m\bm{v}\cdot d\bm{x}$. Using again the fact that $\Phi_\theta$ leaves $\bm{x}$ fixed, the average is simple to compute, giving $\langle \vartheta_1\rangle = m\bm{v}\cdot\bhat \bhat\cdot d\bm{x}$. As there again is no $d\bm{v}$ component in $\langle\vartheta_1\rangle$, the contraction $\iota_{R_0}\langle \vartheta_1\rangle $ vanishes, giving $\mu_1 =0$. Finally using Eq.\,(3.16) in \cite{Burby_Squire_2020}, we find that $\mu_2 = \frac{1}{2}\langle \mathbf{d}\vartheta_0(\mathcal{L}_{R_0}I_0\widetilde{X}_1,I_0\widetilde{X}_1)\rangle$, where $X_1=\bm{v}\cdot\partial_{\bm{x}}-\mathcal{M}\nabla|\bm{B}|\cdot\partial_{\bm{v}}$ is the first-order term in $X_\epsilon$, $\widetilde{X}_1 =X_1 - \langle X_1\rangle $, and $I_0 = \mathcal{L}_{\omega_0\,R_0}^{-1}$. 
The pullback of $X_1$ along $R_0$ is given by
\begin{align}
    X_1^\theta=\Phi^\ast_\theta X_1 &=\bm{v}\cdot\bm{b}\bm{b}\cdot\partial_{\bm{x}}+\sin\theta\bm{v}\times\bm{b}\cdot\partial_{\bm{x}}+\cos\theta\bm{b}\times(\bm{v}\times\bm{b})\cdot\partial_{\bm{x}}\nonumber
    \\
    &+\mathcal{M}(-\bm{b}\bm{b}\cdot\nabla |\bm{B}| +\sin\theta \bm{b}\times\nabla |\bm{B}| + \cos\theta\bm{b}\times(\bm{b}\times\nabla |\bm{B}|))\cdot\partial_{\bm{v}}\nonumber
    \\
    &+\{(\bm{v}\cdot\bm{b}) (\bm{b}\times\bm{\kappa})\times\bm{v}-\frac{1}{2}[\bm{b}\times(\bm{v}_\perp\cdot\nabla\bm{b})]\times\bm{v}+\frac{1}{2} [(\bm{v}\times\bm{b})\cdot\nabla\bm{b}]\times\bm{v}\}\cdot\partial_{\bm{v}}\nonumber
    \\
    &+\cos\theta \{[\bm{b}\times(\bm{v}_\perp\cdot\nabla\bm{b})]\times\bm{v}-(\bm{v}\cdot\bm{b})(\bm{b}\times\bm{\kappa})\times\bm{v}\}\cdot\partial_{\bm{v}}\nonumber
    \\
    &+\sin\theta \{[\bm{b}\times((\bm{v}\times\bm{b})\cdot\nabla\bm{b})]\times\bm{v}+(\bm{v}\cdot\bm{b})\bm{\kappa}\times\bm{v}\}\cdot\partial_{\bm{v}}\nonumber
    \\
    &+\frac{1}{2}\sin(2\theta)\{ (\bm{v}_\perp\cdot\nabla\bm{b})\times\bm{v}-[\bm{b}\times((\bm{v}\times\bm{b})\cdot\nabla\bm{b})]\times\bm{v}\}\cdot\partial_{\bm{v}}\nonumber
    \\
    &-\frac{1}{2}\cos(2\theta) \{[\bm{b}\times(\bm{v}_\perp\cdot\nabla\bm{b})]\times\bm{v}
    +[(\bm{v}\times\bm{b})\cdot\nabla\bm{b}]\times\bm{v}\}\cdot\partial_{\bm{v}}
\end{align}
This permits us to compute the inverse 
\begin{align}
    \label{eq:adiabatic_helper1}
    \omega_cI_0\widetilde{X}_1^\theta&=-\cos\theta\bm{v}\times\bm{b}\cdot\partial_{\bm{x}}+\sin\theta\bm{b}\times(\bm{v}\times\bm{b})\cdot\partial_{\bm{x}}\nonumber
    \\
    &+\mathcal{M}(-\cos\theta \bm{b}\times\nabla |\bm{B}| + \sin\theta\bm{b}\times(\bm{b}\times\nabla |\bm{B}|))\cdot\partial_{\bm{v}}\nonumber
    \\
    &+\sin\theta \{[\bm{b}\times(\bm{v}_\perp\cdot\nabla\bm{b})]\times\bm{v}-(\bm{v}\cdot\bm{b})(\bm{b}\times\bm{\kappa})\times\bm{v}\}\cdot\partial_{\bm{v}}\nonumber
    \\
    &-\cos\theta \{[\bm{b}\times((\bm{v}\times\bm{b})\cdot\nabla\bm{b})]\times\bm{v}+(\bm{v}\cdot\bm{b})\bm{\kappa}\times\bm{v}\}\cdot\partial_{\bm{v}}\nonumber
    \\
    &-\frac{1}{8}\cos(2\theta)\{ (\bm{v}_\perp\cdot\nabla\bm{b})\times\bm{v}-[\bm{b}\times((\bm{v}\times\bm{b})\cdot\nabla\bm{b})]\times\bm{v}\}\cdot\partial_{\bm{v}}\nonumber
    \\
    &-\frac{1}{8}\sin(2\theta) \{[\bm{b}\times(\bm{v}_\perp\cdot\nabla\bm{b})]\times\bm{v}
    +[(\bm{v}\times\bm{b})\cdot\nabla\bm{b}]\times\bm{v}\}\cdot\partial_{\bm{v}}
    \nonumber
    \\
    &-(\sin\theta\bm{v}\times\bm{b}+\cos\theta\bm{b}\times(\bm{v}\times\bm{b}))\cdot\nabla\ln\omega_c \bm{v}\times\bm{b}\cdot\partial_{\bm{v}}
\end{align}
and, from this, trivially the expression
\begin{align}
    \label{eq:adiabatic_helper2}
    \omega_c\mathcal{L}_{ R_0}(I_0\widetilde{X}_1^\theta)=\omega_c\partial_\theta(I_0\widetilde{X}_1^\theta)&=\sin\theta\bm{v}\times\bm{b}\cdot\partial_{\bm{x}}+\cos\theta\bm{b}\times(\bm{v}\times\bm{b})\cdot\partial_{\bm{x}}\nonumber
    \\
    &+\mathcal{M}(\sin\theta \bm{b}\times\nabla |\bm{B}| + \cos\theta\bm{b}\times(\bm{b}\times\nabla |\bm{B}|))\cdot\partial_{\bm{v}}\nonumber
    \\
    &+\cos\theta \{[\bm{b}\times(\bm{v}_\perp\cdot\nabla\bm{b})]\times\bm{v}-(\bm{v}\cdot\bm{b})(\bm{b}\times\bm{\kappa})\times\bm{v}\}\cdot\partial_{\bm{v}}\nonumber
    \\
    &+\sin\theta \{[\bm{b}\times((\bm{v}\times\bm{b})\cdot\nabla\bm{b})]\times\bm{v}+(\bm{v}\cdot\bm{b})\bm{\kappa}\times\bm{v}\}\cdot\partial_{\bm{v}}\nonumber
    \\
    &+\frac{1}{4}\sin(2\theta)\{ (\bm{v}_\perp\cdot\nabla\bm{b})\times\bm{v}-[\bm{b}\times((\bm{v}\times\bm{b})\cdot\nabla\bm{b})]\times\bm{v}\}\cdot\partial_{\bm{v}}\nonumber
    \\
    &-\frac{1}{4}\cos(2\theta) \{[\bm{b}\times(\bm{v}_\perp\cdot\nabla\bm{b})]\times\bm{v}
    +[(\bm{v}\times\bm{b})\cdot\nabla\bm{b}]\times\bm{v}\}\cdot\partial_{\bm{v}}
    \nonumber
    \\
    &-(\cos\theta\bm{v}\times\bm{b}-\sin\theta\bm{b}\times(\bm{v}\times\bm{b}))\cdot\nabla\ln\omega_c \bm{v}\times\bm{b}\cdot\partial_{\bm{v}}
\end{align}
Since $\mathbf{d}\vartheta_0$ is independent of $\bm{v}$, we only need the $\bm{x}$-components of the vector fields $I_0\widetilde{X}^\theta_1$ and $\mathcal{L}_{R_0}I_0\widetilde{X}^\theta_1$, which finally provide the expression for the first nonvanishing term in the adiabatic invariant series
\begin{align}
    \mu_2=\frac{m}{2}\frac{|\bm{v}\times\bm{b}|^2}{\omega_c}.
\end{align}
As is straighforward to verify, the Hessian along the normal direction $(v^1,v^2)$ is sign-definite
\begin{align}
    \mathbf{H}_\perp(\mu_2)=
    \frac{m}{\omega_c}\begin{pmatrix}
    1 & 0 \\
    0 & 1
    \end{pmatrix},
\end{align}
confirming the normal stability of the slow-manifold via Theorem \ref{free_action_thm}.


\subsection{The proper-time relativistic Pauli embedding\label{rel_pauli_sec}}
As a second application, we generalize the discussion from Section \ref{eq:classical_pauli} to the Lorentz-covariant relativistic setting. We begin by recalling the standard Lorentz-covariant Hamiltonian formulation of charged particle dynamics on a flat Minkowski spacetime $(M,\langle \cdot,\cdot\rangle)$, whose inner product $\langle\cdot,\cdot\rangle$ has mostly-positive signature. We denote spacetime events using the symbol $R\in M$, and elements of the spacetime tangent bundle $TM\approx M\times M$ as $(R,V)\in T_RM$. The electromagnetic field is specified by a $1$-form $A$ on $M$, whose exterior derivative gives the Faraday $2$-form $F = \mathbf{d}A$. By way of the Minkowski inner product, the Faraday $2$-form induces a Faraday tensor $\bm{F}:TM\rightarrow TM$, defined so that $\langle V_1,\bm{F}\,V_2\rangle = \iota_{V_2}\iota_{V_1}F$ for each pair of vector fields $V_1,V_2$ on $M$. An individual charged particle with $4$-position $R$, $4$-velocity $V$, and proper time $\tau$ moves through such a spacetime according to the relativistic Newton-Lorentz equations,
\begin{align}
\frac{dV}{d\tau} & = \zeta\,\bm{F}(R)\,V,\quad \frac{dR}{d\tau}  = \epsilon\,V. \label{NL_accelerattion}
\end{align}
We have written \eqref{NL_accelerattion} in dimensionless form. To recover dimensional results, introduce the particle charge $q$, the particle mass $m$, the speed of light $c$, a spacetime length scale $L$, and a characteristic magnetic field strength $B_0$ (physically interpreted as the characteristic size of the Lorentz scalar $\sqrt{|\bm{B}|^2 - |\bm{E}|^2}$). The dimensional $4$-position, $4$-velocity, proper time, and Faraday tensor are then given by $LR$, $c\,V$, $(mc/q B_0)\tau$, and $B_0\,L^2\,F$, respectively. The correct physical interpretations of the constants $\zeta$ and $\epsilon$ are therefore $\zeta = q/|q|$ and
\begin{align}
    \epsilon = \frac{mc^2}{|q|\,B_0\,\,L},\label{relativistic_eps_def}
\end{align}
where the latter represents the ratio of the so-called ``light radius" $\rho_c = (mc^2)/(|q|B_0)$ to the characteristic field scale length $L$, $\epsilon = \rho_c/L$.

Going forward, we will assume that the electromagnetic potential $A$ decomposes as the sum $A =A_0 + \epsilon\,A_1$, where the ``$\bm{E}\cdot\bm{B}$'' and ``$|\bm{B}|^2 - |\bm{E}|^2$'' Lorentz scalars associated with $F_0 = \mathbf{d}A_0$ are zero and positive, respectively. This is a Lorentz-covariant way of asserting the system is strongly magnetized. As such, we refer to this assumption as the \textbf{magnetization assumption}. 

Under the magnetization assumption, we claim the Newton-Lorentz equations \eqref{NL_accelerattion} comprise a nearly-periodic Hamiltonian system $X_\epsilon = (dR/d\tau,dV/d\tau)$ on the exact, regular, barely-symplectic manifold $(TM,\Omega_\epsilon)$. We argue as follows. The barely-symplectic form is given by $\Omega_\epsilon = -\mathbf{d}\vartheta_\epsilon$, where
\begin{align}
    \vartheta_\epsilon = \zeta\,(A_0+\epsilon\,A_1) + \epsilon\,\langle V,dR\rangle. \label{relativistic_one_form}
\end{align}
The degeneracy index for $\Omega_\epsilon$ is $d=2$, as in the non-relativistic case. The system Hamiltonian is $H_\epsilon(R,V)  = \tfrac{1}{2}\epsilon^2\,\langle V,V\rangle.$
It is straightforward to confirm that the Hamilton equation $\iota_{X_\epsilon}\Omega_\epsilon = \mathbf{d}H_\epsilon$ recovers \eqref{NL_accelerattion} for all values of $\epsilon$, which confirms the Hamiltonian nature of the relativistic Newton-Lorentz equations. To demonstrate that $X_\epsilon$ is nearly-periodic, we must show $X_0 = \omega_0\,R_0$, where $\omega_0$ is some nowhere-vanishing smooth function and $R_0$ is the generator of a $U(1)$-action on $TM$ that satisfies $\mathcal{L}_{R_0}\omega_0 = 0$. For this, we turn to the following Lemma.

\begin{lemma}\label{NL_nearly_periodic_structure}
Under the magnetization assumption, the smooth function $\omega_0 = \sqrt{-\mathrm{tr}(\bm{F}_0^2)/2}$ is nowhere-vanishing. In addition, the vector field 
\begin{align}
    R_0 = \frac{\zeta}{\omega_0}\,\bm{F}_0\,V\,\partial_V,
\end{align}
is the generator of a $U(1)$-action on $TM$, and $\mathcal{L}_{R_0}\omega_0 = 0$.
\end{lemma}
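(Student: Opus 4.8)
The plan is to treat the three claims in turn, with essentially all of the work concentrated in a pointwise normal form for the Faraday tensor $\bm{F}_0$. \emph{Nowhere-vanishing of $\omega_0$:} since $F_0$ is a $2$-form, the induced operator $\bm{F}_0$ is skew-adjoint with respect to $\langle\cdot,\cdot\rangle$ (that is, $\langle \bm{F}_0 V_1, V_2\rangle = -\langle V_1, \bm{F}_0 V_2\rangle$), and its two Lorentz invariants are $-\tfrac12\mathrm{tr}(\bm{F}_0^2)$, which equals the ``$|\bm{B}|^2-|\bm{E}|^2$'' scalar, and $\det\bm{F}_0$, which is a non-negative multiple of the square of the ``$\bm{E}\cdot\bm{B}$'' scalar; both identifications are a one-line computation in any pointwise Lorentz frame. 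The magnetization assumption says precisely that $-\tfrac12\mathrm{tr}(\bm{F}_0^2)>0$ and $\det\bm{F}_0 = 0$ everywhere, so $\omega_0 = \sqrt{-\mathrm{tr}(\bm{F}_0^2)/2}$ is a smooth, strictly positive, hence nowhere-vanishing function on $M$.

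\emph{Normal form and the cubic identity.} Fix a spacetime point. Because the ``$\bm{E}\cdot\bm{B}$'' scalar vanishes there and the field is magnetically dominated, there is a Lorentz frame at that point in which the electric field vanishes. In such a frame $\bm{F}_0$ is block diagonal: it annihilates the $2$-plane spanned by the time axis and $\bm{B}$, and on the orthogonal spatial $2$-plane it acts as $|\bm{B}|$ times the standard rotation generator $J$, which satisfies $J^2 = -\mathrm{id}$ and hence $J^3 = -J$. Thus $\bm{F}_0^2 = -|\bm{B}|^2\,\mathrm{id}$ on that plane and $\bm{F}_0 = 0$ on its complement, so $\bm{F}_0^3 = -|\bm{B}|^2\,\bm{F}_0$; and in this frame $-\tfrac12\mathrm{tr}(\bm{F}_0^2) = |\bm{B}|^2$, so the frame-independent identity $\bm{F}_0^3 = -\omega_0^2\,\bm{F}_0$ holds at the point, and hence everywhere on $M$. (As a consistency check, the characteristic polynomial of the skew-adjoint operator $\bm{F}_0$ reduces to $\lambda^2(\lambda^2+\omega_0^2)$ once $\det\bm{F}_0 = 0$ is imposed; the normal form additionally supplies semisimplicity of $\bm{F}_0$, which is exactly what prevents a secular term in the flow below.)

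\emph{The $U(1)$-action and invariance of $\omega_0$.} Since $\bm{F}_0$ and $\omega_0$ depend on $R$ only, $R_0$ is a vertical vector field on $TM\approx M\times M$, equal to $\bigl(0, \tfrac{\zeta}{\omega_0(R)}\bm{F}_0(R)V\bigr)$ at $(R,V)$, and it is complete because at fixed $R$ its integral curves solve the linear ODE $dV/d\theta = \tfrac{\zeta}{\omega_0(R)}\bm{F}_0(R)\,V$. Writing $N = \tfrac{\zeta}{\omega_0}\bm{F}_0$ and using $\zeta^2 = 1$ with the cubic identity gives $N^3 = -N$, whence $e^{\theta N} = \mathrm{id} + \sin\theta\,N + (1-\cos\theta)\,N^2$ (differentiate and compare at $\theta = 0$). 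Therefore the flow of $R_0$ is $\Phi_\theta(R,V) = (R, e^{\theta N(R)}V)$, which is jointly smooth in $(\theta,R,V)$ and satisfies $\Phi_{2\pi} = \mathrm{id}$; so $\Phi$ is a smooth $U(1)$-action whose infinitesimal generator is $R_0$. Finally $\mathcal{L}_{R_0}\omega_0 = \iota_{R_0}\mathbf{d}\omega_0 = 0$, because $R_0$ has no $\partial_R$ component while $\mathbf{d}\omega_0$ involves only the $R$-differentials.

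\emph{Expected main obstacle.} The only substantive step is the normal form of the second part — choosing, at each point, a Lorentz frame in which $\bm{E} = 0$ and reading off $\bm{F}_0^3 = -\omega_0^2\,\bm{F}_0$ — together with the sign bookkeeping, upstream of it, that correctly matches the ``$\bm{E}\cdot\bm{B}$'' and ``$|\bm{B}|^2 - |\bm{E}|^2$'' Lorentz scalars to $\det\bm{F}_0$ and $-\tfrac12\mathrm{tr}(\bm{F}_0^2)$ in the mostly-positive signature convention. Everything after the cubic identity is elementary linear ODE theory.
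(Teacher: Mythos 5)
Your proof is correct, but it reaches the key periodicity statement by a genuinely different route than the paper. The paper never passes to a special Lorentz frame: it shows $\det[\bm{F}_0]=0$ forces the antisymmetric coefficient matrix to have spectrum $(i\lambda,-i\lambda,0,0)$, hence a $2$-dimensional null space $K_R$, restricts $\bm{F}_0$ to the orthogonal invariant plane $K_R^\perp$, and applies the $2\times 2$ Cayley--Hamilton theorem to get $(\bm{F}_0^\perp/\omega_0)^2=-\mathbb{I}^\perp$; it then splits $V=V_\parallel+V_\perp$ and integrates the two blocks separately via Euler's formula, producing the explicit action $\Phi_\theta(R,V)=(R,P_\parallel V+[\cos\theta\,\mathbb{I}_\perp+\sin\theta\,\zeta\bm{F}_0^\perp/\omega_0]P_\perp V)$. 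You instead boost pointwise to a frame with $\bm{E}=0$ to read off the frame-independent, semisimple cubic identity $\bm{F}_0^3=-\omega_0^2\,\bm{F}_0$, and then exponentiate in closed form, $e^{\theta N}=\mathrm{id}+\sin\theta\,N+(1-\cos\theta)\,N^2$ with $N=\zeta\bm{F}_0/\omega_0$; since $N^2=-P_\perp$ this is literally the same map $\Phi_\theta$, so the two arguments agree in content. Your version is more compact and correctly flags that the characteristic polynomial alone would only give $\bm{F}_0^4=-\omega_0^2\bm{F}_0^2$, with semisimplicity supplied by the block normal form; the paper's version has the advantage of constructing along the way the objects it reuses later (the parallel and perpendicular flats, the projectors $P_\parallel,P_\perp$ of \eqref{projector_formulas}, and the explicit $\Phi_\theta$ of \eqref{relativistic_circle_action} needed for the $\mu_2$ computation). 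One small slip: with the mostly-positive signature, $\det\bm{F}_0$ (as the mixed operator $g^{-1}F$) equals $-(\bm{E}\cdot\bm{B})^2$ up to a positive factor, not a non-negative multiple of $(\bm{E}\cdot\bm{B})^2$; this is harmless, since you only use that it vanishes iff $\bm{E}\cdot\bm{B}=0$, and your normal-form step in fact invokes the magnetization assumption directly rather than the determinant.
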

\begin{proof}
First we note that $\text{tr}(\bm{F}_0^2) = 2(|\bm{E}_0|^2-|\bm{B}_0|^2)$.  According to the magnetization assumption, we therefore have $-\text{tr}(\bm{F}_0^2)>0$ on $M$. It immediately follows that $\omega_0$ is real-valued and nowhere-vanishing, as required. We also note that $\mathcal{L}_{R_0}\omega_0 = 0$ is obvious since $\omega_0$ depends only on $R$, while $R_0$ has no $R$-component.

Next we identify the dimension of the null space for $\bm{F}_0$. With respect to an orthonormal basis $(q_0,q_1,q_2,q_3)$ for $T_RM$ such that $q_0$ is timelike, the coefficient matrix for $\bm{F}_0(R)$, $[\bm{F}_0]$, is given by $[\bm{F}_0] = [g][F]$, where $[g]_{ij} = \langle q_i,q_j\rangle$ is diagonal symmetric and $[F]_{ij} = F(e_i,e_j)$ is antisymmetric. Because the $\bm{E}\cdot\bm{B}$ Lorentz scalar vanishes for $\bm{F}_0$, we must have $ 0= \text{det}\,[\bm{F}_0]=(\text{det}\,[g])(\text{det}\,[F]) = -\text{det}\,[F]$. In other words, the antisymmetric matrix $[F]$ must have a non-trivial null space. And since, by hypothesis, $[F]$ does not vanish, the block normal form for antisymmetric matrices implies that the spectrum for $[F]$ must be of the form $(i\lambda,-i\lambda,0,0)$, where $\lambda>0$. In particular, the null space $K_R\subset T_RM$ for $\bm{F}_0(R)$ must be $2$-dimensional.

Now we will characterize the behavior of $\bm{F}_0$ on the subspace orthogonal to its null space. Let $K_R^\perp\subset T_RM$ be the orthogonal complement to the null space $K_R$. If $V\in K_R^\perp$ and $W\in K_R$ then $\langle W,\bm{F}_0(R)\,V\rangle = -\langle V,\bm{F}_0(R)\,W\rangle = 0$. Therefore $K_R^\perp$ is a $2$-dimensional invariant subspace for $\bm{F}_0(R)$ complementary to $K_R$. Let $\bm{F}_0^\perp(R):K_R^\perp\rightarrow K_R^\perp$ denote the restriction of $\bm{F}_0(R)$ to $K_R^\perp$. By the Cayley-Hamilton theorem for $2\times 2$ matrices, we have
\begin{align}
    (\bm{F}_0^\perp(R))^2  +\text{det}(\bm{F}_0^\perp(R))\,\mathbb{I}^{\perp}=0,\label{ch_thm}
\end{align}
where we have used $\text{tr}(\bm{F}_0^\perp(R)) = \text{tr}(\bm{F}_0(R)) = 0$ and introduced the identity map $\mathbb{I}^\perp:K_R^\perp\rightarrow K_R^\perp$. Taking the trace of \eqref{ch_thm}, we also obtain
\begin{align}
    \text{tr}((\bm{F}_0^{\perp}(R))^2)  + 2\,\text{det}(\bm{F}_0^\perp(R)) = 0.\label{tr_ch_thm}
\end{align}
Combining \eqref{ch_thm} and \eqref{tr_ch_thm}, we find,
\begin{align}
   \left( \frac{\bm{F}_0^\perp(R) }{\sqrt{-\text{tr}((\bm{F}_0^{\perp}(R))^2)/2}}\right)^2=  -\mathbb{I}^{\perp}\label{acs}
\end{align}
where we have used $\text{tr}((\bm{F}_0^{\perp})^2) = 2(E^2-B^2)<0$ to ensure the square root is real. This identity says $\bm{F}_0^\perp(R)/\omega_0(R)$ is a complex structure on the vector space $K_R^\perp$ for each $R$.

Finally we determine the integral curves of the vector field $R_0$. If $(R(\lambda),V(\lambda))$ is such an integral curve, then the component curves satisfy the system of ordinary differential equations
\begin{align*}
    \frac{dV}{d\lambda}  &  = \frac{\zeta}{\omega_0}\,\bm{F}_0\,V,\quad \frac{dR}{d\lambda}  = 0.
\end{align*}
Clearly $R(\lambda) = R(0)$. For the $4$-velocity, we write $V(\lambda) = V_{\parallel}(\lambda) + V_{\perp}(\lambda)$, where $V_{\parallel}$ denotes the orthogonal projection into the null space $K_{R(0)}$ and $V_\perp$ denotes the orthogonal projection into $K_R^\perp$. These projected curves satisfy the linear system
\begin{align*}
    \frac{dV_\parallel}{d\lambda} & = 0,\quad \frac{dV_\perp}{d\lambda}  = \frac{\zeta}{\omega_0}\,\bm{F}_0^\perp\,V_\perp.
\end{align*}
We obviously have $V_\parallel(\lambda) = V_{\parallel}(0)$. To solve the $V_\perp$ equation, we observe that \eqref{acs} implies $[\frac{\zeta}{\omega_0}\,\bm{F}_0^\perp]^2 = -\mathbb{I}_\perp$, and then recognize that we can compute the matrix exponential $\exp(\lambda \frac{\zeta}{\omega_0}\,\bm{F}_0^\perp)$ exactly using Euler's formula $\exp(i\theta) = \cos\theta + i\,\sin\theta$, giving
\begin{align}
    V_\perp(\lambda) &=\exp(\lambda\,\frac{\zeta}{\omega_0}\,\bm{F}_0^\perp)\,V_\perp(0)\nonumber\\
    & = \bigg(\cos\lambda\,\mathbb{I}_\perp + \sin\lambda\,\frac{\zeta}{\omega_0}\,\bm{F}_0^\perp\bigg)\,V_\perp(0).\nonumber
\end{align}
By $2\pi$-periodicity of the solutions thus obtained, we conclude that $R_0$ generates a $U(1)$-action given explicitly by
\begin{align}
    \Phi_\theta(R,V) = (R,P_\parallel\,V + [\cos\theta\,\mathbb{I}_\perp + \sin\theta\,\zeta\,\bm{F}_0^\perp/\omega_0]P_\perp\,V),\label{relativistic_circle_action}
\end{align}
where $P_\perp$ and $P_\parallel$ denote orthogonal projections into $K_R^\perp$ and $K_R$, respectively.
\end{proof}

Note that in the process of proving the above Lemma we identified important structural properties of $\bm{F}_0$. These are summarized in the following definition.

\begin{definition}
The \textbf{parallel flat} is the subbundle $K\subset TM$ whose fiber at $R\in M$ is the $2$-dimensional null space of $\bm{F}_0(R)$. The \text{perpendicular flat} is the orthogonal complement bundle $K^\perp$. The \textbf{orthogonal projections} into $K$ and $K^\perp$ are given by $P_\parallel:TM\rightarrow TM$ and $P_\perp:TM\rightarrow TM$, respectively, where
\begin{align}
    P_\perp = -\frac{\bm{F}_0^2}{\omega_0^2},\quad P_\parallel = \mathbb{I} -P_\perp.\label{projector_formulas}
\end{align}
\end{definition}



We may now define the equations of motion for a relativistic Pauli particle, and study their properties. The \textbf{relativistic Pauli Hamiltonian} $\mathcal{H}_\epsilon:TM\rightarrow \mathbb{R}$ is given by
\begin{align}
    \mathcal{H}_\epsilon(R,V) & = \frac{1}{2}\epsilon^2\,\langle V,V\rangle +\epsilon^2\,\mathcal{M}\,\omega_0,
\end{align}
where $\mathcal{M}\in\mathbb{R}$ is a parameter and $\omega_0$ is defined in Lemma \ref{NL_nearly_periodic_structure}. The \textbf{relativistic Pauli system} is the vector field $\mathcal{X}_\epsilon$ defined by the Hamilton equation $\iota_{\mathcal{X}_\epsilon}\Omega_\epsilon = \mathbf{d}\mathcal{H}_\epsilon$ , where $\Omega_\epsilon = -\mathbf{d}\vartheta_\epsilon$ with $\vartheta_\epsilon$ given in \eqref{relativistic_one_form}. As in the non-relativistic case, in defining this Pauli system we have left the Lorentz symplectic structure unchanged while adding a Pauli potential $\mathcal{M}\,\omega_0$ to the Lorentz Hamiltonian. Also is parallel with the non-relativistic case, the relativistic Pauli system admits a non-degenerate Lagrangian structure with Lagrangian $L(R,\dot{R}) = \epsilon\,\tfrac{1}{2}\langle \dot{R},\dot{R}\rangle + \zeta\,\iota_{\dot{R}}A - \epsilon\,\mathcal{M}\,\omega_0(R)$. The following analysis will demonstrate that the relativistic guiding center equations, as derived originally by Boghosian \cite{Boghosian_2003}, are embedded within the relativistic Pauli system as a slow manifold, and that this slow manifold enjoys long-term normal stability. In so doing we will generalize the observations of Xiao and Qin \cite{Xiao_2020} to allow for time-dependent electromagnetic fields, strong $E\times B$ drifts, and all special relativistic effects such as time dialation. In addition, we will generalize our result on continuous-time normal stability of the Pauli embedding to the relativistic setting.

First we observe that $\mathcal{X}_\epsilon$ is a nearly-periodic system. To show this, we note that $\mathcal{X}_\epsilon$ is given explicitly by $\mathcal{X}_\epsilon = (dR/d\tau,dV/d\tau)$ with
\begin{align}
    \frac{dR}{d\tau} & = \epsilon\,V,\quad \frac{dV}{d\tau} = \zeta\,(\bm{F}_0+\epsilon\,\bm{F}_1)\,V - \epsilon\,\mathcal{M}\,\nabla \omega_0.\label{relativistic_pauli_system}
\end{align}
Here $\nabla$ denotes the gradient operator associated with the Minkowski inner product. These equations differ from the Newton-Lorentz equations by a single $O(\epsilon)$ term. Therefore $\mathcal{X}_0 = \omega_0\,R_0$, where $\omega_0$ and $R_0$ are defined as they were for the Newton-Lorentz system. Lemma \ref{NL_nearly_periodic_structure} therefore implies $\mathcal{X}_\epsilon$ is nearly-periodic, as claimed.

Next we will show that the relativistic Pauli system is fast-slow in order to efficient identify the system's slow manifold and the corresponding induced slow dynamics.

\begin{lemma}\label{frame_prop}
There exists a smooth orthonormal tetrad $(e_0,e_1,e_2,e_3)$ on $M$ such that $(e_0,e_3)$ frames the null-space bundle $K\subset TM$ and $(e_1,e_2)$ frames $K^\perp\subset TM$. Moreover, $e_0$ is timelike and $e_k$ is spacelike for $k=1,2,3$.
\end{lemma}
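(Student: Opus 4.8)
The plan is to build the tetrad as the concatenation of an orthonormal frame of the parallel flat $K$ with an orthonormal frame of the perpendicular flat $K^\perp$, relying on the structure of $\bm{F}_0$ already isolated in the proof of Lemma~\ref{NL_nearly_periodic_structure}. Recall from that proof and the ensuing definition that, under the magnetization assumption, $\bm{F}_0$ has constant rank $2$; hence $P_\perp=-\bm{F}_0^2/\omega_0^2$ of \eqref{projector_formulas} is a smooth bundle endomorphism of constant rank, its image $K^\perp$ and kernel $K$ are smooth rank-$2$ subbundles of $TM$, and $TM=K\oplus K^\perp$ orthogonally with $P_\parallel,P_\perp$ the associated orthogonal projections.

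The first real step is to certify the signatures: $K^\perp$ must be spacelike and $K$ Lorentzian. Using the complex structure $J=\bm{F}_0^\perp/\omega_0$ on $K^\perp$ furnished by \eqref{acs}, for any nonzero $V\in K^\perp_R$ one gets $\langle V,JV\rangle=0$ (antisymmetry of $\bm{F}_0$) and $\langle JV,JV\rangle=\langle V,V\rangle$ (from $(\bm{F}_0^\perp)^2=-\omega_0^2\,\mathbb{I}^\perp$); were $\langle V,V\rangle<0$, the pair $V,JV$ would span a negative-definite $2$-plane in $T_RM$, impossible in the index-one Minkowski metric. So $\langle\cdot,\cdot\rangle$ restricts to a positive semi-definite form on $K^\perp$ whose radical lies in $K^\perp\cap K=\{0\}$, hence a positive-definite one; the complementary summand $K$ is then non-degenerate of index one.

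The remaining step is topological and routine: $M$ is diffeomorphic to $\mathbb{R}^4$, hence contractible, so every principal bundle over $M$ is trivial. Trivializing the $O(1,1)$-bundle of pseudo-orthonormal frames of $K$ yields a smooth global frame $(e_0,e_3)$ of $K$ with $\langle e_0,e_0\rangle=-1$, $\langle e_3,e_3\rangle=1$, $\langle e_0,e_3\rangle=0$; trivializing the $O(2)$-bundle of orthonormal frames of $K^\perp$ yields a smooth global frame $(e_1,e_2)$ of $K^\perp$ with $\langle e_i,e_j\rangle=\delta_{ij}$. Since $TM=K\oplus K^\perp$ orthogonally, $(e_0,e_1,e_2,e_3)$ is then a smooth orthonormal tetrad with $e_0$ timelike, $e_1,e_2,e_3$ spacelike, $(e_0,e_3)$ framing $K$, and $(e_1,e_2)$ framing $K^\perp$. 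If convenient one may additionally impose $e_2=Je_1$, but the lemma as stated does not require it.

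I expect the only delicate point to be the signature bookkeeping of the second step, since the legitimacy of using $O(1,1)$ (rather than $O(2)$ or $O(3)$) for $K$ hinges on $K^\perp$ being spacelike; the existence of the frames themselves is then immediate from contractibility. As a sanity check, one can exhibit the timelike leg explicitly without invoking triviality of $K$: if $T$ is the standard constant timelike field then $-1=\langle T,T\rangle=\langle P_\parallel T,P_\parallel T\rangle+\langle P_\perp T,P_\perp T\rangle$ with the second term $\ge 0$ forces $P_\parallel T$ to be everywhere timelike, so $e_0=P_\parallel T/\sqrt{-\langle P_\parallel T,P_\parallel T\rangle}$ is a global unit timelike section of $K$.
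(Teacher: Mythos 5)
Your argument is correct; the wrinkle is that the paper never proves Lemma \ref{frame_prop} at all -- it is stated bare, just as its non-relativistic counterpart, Lemma \ref{classical_frame_lemma}, is delegated to a citation -- so there is no in-paper proof to compare against, and your proposal effectively supplies the missing argument. The two ingredients you isolate are exactly what is needed: (i) the signature bookkeeping, where the complex structure $\bm{F}_0^\perp/\omega_0$ of \eqref{acs} gives $\langle V,JV\rangle=0$ and $\langle JV,JV\rangle=\langle V,V\rangle$ on $K^\perp$, so a timelike vector in $K^\perp$ would span, together with its $J$-image, a negative-definite $2$-plane, impossible in a metric of index one; hence $K^\perp$ is spacelike and $K=(K^\perp)^\perp$ carries signature $(1,1)$; and (ii) triviality of the associated orthonormal/pseudo-orthonormal frame bundles over the contractible base $M\approx\mathbb{R}^4$, which yields the global sections $(e_0,e_3)$ and $(e_1,e_2)$. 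Both steps are sound, and the explicit timelike section $e_0=P_\parallel T/\sqrt{-\langle P_\parallel T,P_\parallel T\rangle}$ is a nice constructive alternative for the $K$-factor (it also hands you $e_3$ as a unit section of the trivial line bundle $e_0^\perp\cap K$). The only dependence worth recording is that your positive-definiteness step invokes $K\cap K^\perp=\{0\}$, which in the paper is asserted inside the proof of Lemma \ref{NL_nearly_periodic_structure} and in \eqref{projector_formulas} rather than argued in detail; since you explicitly rely on that lemma this is permissible, and in any case it follows at once from \eqref{acs}, because $(\bm{F}_0^\perp)^2=-\omega_0^2\,\mathbb{I}^\perp$ makes $\bm{F}_0$ invertible on $K^\perp$, so no nonzero vector of $K^\perp$ can lie in $\ker\bm{F}_0=K$.
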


\begin{lemma}
In the coordinates $(R,V^0,V^1,V^2,V^3)$ on $TM$ defined by 
\begin{align}
V = V^0\,e_0 + V^1 \,e_1 + V^2\,e_2 + V^3\,e_3,\nonumber
\end{align}
where $(e_0,e_1,e_2,e_3)$ is the orthonormal tetrad provided by Proposition \ref{frame_prop}, the relativistic Pauli system \eqref{relativistic_pauli_system} is equivalent to 
\begin{align}
\frac{dV^0}{d\tau} &=\epsilon\, \langle  \zeta\,\bm{{F}}_1\,e_0,V\rangle  -\epsilon\, \left\langle V_\perp,\nabla_V\left[\frac{\bm{{F}}_0^2}{\omega_0^2}\right]e_0 \right\rangle - \epsilon\,V^3\mathcal{Q}(V)+\epsilon\,\mathcal{M}\,\langle  e_0,\nabla\omega_0\rangle\label{v0dot_prop}\\
\frac{dV^1}{d\tau} &= -\langle \zeta\,(\bm{{F}}_0+\epsilon\,\bm{F}_1)\,e_1,V\rangle   -\epsilon\,\left\langle V_\parallel,\nabla_V\left[\frac{\bm{{F}}_0^2}{\omega_0^2}\right]\,e_1  \right\rangle + \epsilon\,V^2\mathcal{R}(V)-\epsilon\,\mathcal{M}\,\langle e_1,\nabla\omega_0 \rangle\\
\frac{dV^2}{d\tau} &=- \langle \zeta\,(\bm{{F}}_0+\epsilon\,\bm{F}_1)\,e_2,V\rangle  -\epsilon\,\left\langle V_\parallel,\nabla_V\left[\frac{\bm{{F}}_0^2}{\omega_0^2}\right]\,e_2  \right\rangle -\epsilon \,V^1\mathcal{R}(V)-\epsilon\,\mathcal{M}\,\langle e_2,\nabla\omega_0 \rangle\\
\frac{dV^3}{d\tau} & =  - \epsilon\,\langle \zeta\, \bm{{F}}_1\,e_3,V\rangle +\epsilon\,\left\langle V_\perp, \nabla_V\left[\frac{\bm{{F}}_0^2}{\omega_0^2}\right]\,e_3 \right\rangle - \epsilon\,V^0\mathcal{Q}(V) - \epsilon\,\mathcal{M}\,\langle e_3,\nabla\omega_0\rangle.\\
\frac{dR}{d\tau} & = \epsilon\,V.\label{rdot_prop}
\end{align}
Here the $1$-forms $\mathcal{Q}$ and $\mathcal{R}$ are defined according to $\mathcal{Q}(V) = \langle \nabla_Ve_0,e_3\rangle $ and $\mathcal{R}(V) = \langle \nabla_Ve_1,e_2\rangle$, and we have introduced the shorthand $V_\parallel = P_\parallel V\in K_R$, $V_\perp = P_\perp V\in K_R^\perp$.

\end{lemma}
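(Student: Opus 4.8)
The plan is to treat this as a direct change of variables, using the orthonormal tetrad of Lemma~\ref{frame_prop} together with the structural facts about $\bm{F}_0$ obtained in the proof of Lemma~\ref{NL_nearly_periodic_structure}. Since the tetrad is orthonormal with $e_0$ timelike and $e_1,e_2,e_3$ spacelike, the components are $V^0 = -\langle e_0,V\rangle$ and $V^k = \langle e_k,V\rangle$ for $k=1,2,3$ (this sign is what will make the $\mathcal{M}$-term and the gyrogauge terms appear with opposite signs in the $V^0$ equation versus the others). Differentiating along an integral curve of $\mathcal{X}_\epsilon$, using that on flat Minkowski space the covariant derivative is the ordinary directional derivative and that $dR/d\tau = \epsilon\,V$, and then substituting $dV/d\tau = \zeta(\bm{F}_0+\epsilon\,\bm{F}_1)V - \epsilon\,\mathcal{M}\,\nabla\omega_0$ from \eqref{relativistic_pauli_system}, one gets e.g.
\begin{align*}
\frac{dV^0}{d\tau} = -\epsilon\,\langle \nabla_V e_0,V\rangle - \big\langle e_0,\zeta(\bm{F}_0+\epsilon\,\bm{F}_1)\,V\big\rangle + \epsilon\,\mathcal{M}\,\langle e_0,\nabla\omega_0\rangle,
\end{align*}
and the analogous expressions for $V^1,V^2,V^3$. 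From here on everything is pointwise linear algebra on $TM$; no further input from the dynamics is needed.

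The next step is to dispose of the Faraday contributions. Because $\bm{F}_0$ and $\bm{F}_1$ are induced by the $2$-forms $F_0,F_1$, they are antisymmetric with respect to $\langle\cdot,\cdot\rangle$, so $\langle e_i,\bm{F}_a V\rangle = -\langle \bm{F}_a e_i,V\rangle$. For $i\in\{0,3\}$ the vector $e_i$ lies in the parallel flat $K = \ker\bm{F}_0$, hence $\bm{F}_0 e_0 = \bm{F}_0 e_3 = 0$ and the $O(1)$ term is absent from $dV^0/d\tau$ and $dV^3/d\tau$, leaving only the $O(\epsilon)$ piece $-\epsilon\langle\zeta\,\bm{F}_1 e_i,V\rangle$; for $i\in\{1,2\}$ no such cancellation occurs, which is the source of the surviving $-\langle\zeta(\bm{F}_0+\epsilon\,\bm{F}_1)e_i,V\rangle$ terms in the $V^1$ and $V^2$ equations. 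This already accounts for the structural asymmetry among the four equations.

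It remains to rewrite the connection term $\langle \nabla_V e_i,V\rangle$. Splitting $V = V_\parallel + V_\perp$ with $V_\parallel = V^0 e_0 + V^3 e_3$ and $V_\perp = V^1 e_1 + V^2 e_2$, and using orthonormality (so $\langle\nabla_V e_i,e_i\rangle = 0$), the ``same-flat'' contribution collapses to a single term governed by the gyrogauge $1$-forms: differentiating $\langle e_0,e_3\rangle = 0$ gives $\langle\nabla_V e_0,V_\parallel\rangle = V^3\,\mathcal{Q}(V)$ and $\langle\nabla_V e_3,V_\parallel\rangle = -V^0\,\mathcal{Q}(V)$, and differentiating $\langle e_1,e_2\rangle = 0$ gives $\langle\nabla_V e_1,V_\perp\rangle = V^2\,\mathcal{R}(V)$ and $\langle\nabla_V e_2,V_\perp\rangle = -V^1\,\mathcal{R}(V)$. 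The ``cross-flat'' contribution is handled by a derivative-of-projector identity: since $P_\parallel e_0 = e_0$, $P_\parallel e_3 = e_3$, $P_\perp e_1 = e_1$, $P_\perp e_2 = e_2$, and $\nabla_V\mathbb{I} = 0$, differentiating and using $P_\parallel = \mathbb{I} + \bm{F}_0^2/\omega_0^2$ from \eqref{projector_formulas} yields $P_\perp\nabla_V e_i = \nabla_V[\bm{F}_0^2/\omega_0^2]\,e_i$ for $i\in\{0,3\}$ and $P_\parallel\nabla_V e_i = -\nabla_V[\bm{F}_0^2/\omega_0^2]\,e_i$ for $i\in\{1,2\}$; self-adjointness and idempotency of the orthogonal projectors (manifest from $P_\perp = -\bm{F}_0^2/\omega_0^2$ and \eqref{acs}) then give $\langle\nabla_V e_i,V_\perp\rangle = \langle V_\perp,\nabla_V[\bm{F}_0^2/\omega_0^2]\,e_i\rangle$ for $i\in\{0,3\}$ and $\langle\nabla_V e_i,V_\parallel\rangle = -\langle V_\parallel,\nabla_V[\bm{F}_0^2/\omega_0^2]\,e_i\rangle$ for $i\in\{1,2\}$. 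Assembling the same-flat and cross-flat pieces, together with the Faraday simplification above and the signature signs, reproduces \eqref{v0dot_prop}--\eqref{rdot_prop} verbatim. I expect the only genuine subtlety to be spotting this projector-derivative identity — the step that rewrites the cross-flat connection coefficients of the tetrad in terms of $\nabla_V[\bm{F}_0^2/\omega_0^2]$; everything else is antisymmetry of the Faraday tensor and careful signature bookkeeping.
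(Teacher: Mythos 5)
Your computation is correct: the paper states this lemma without proof, and your direct frame-decomposition argument (differentiating $V^0=-\langle e_0,V\rangle$, $V^k=\langle e_k,V\rangle$ along the flow, killing the $O(1)$ Faraday terms via $\bm{F}_0e_0=\bm{F}_0e_3=0$ and antisymmetry, and converting the cross-flat connection coefficients with the projector-derivative identity $P_\perp\nabla_Ve_i=(\nabla_VP_\parallel)e_i=\nabla_V[\bm{F}_0^2/\omega_0^2]e_i$) is exactly the intended verification, and all signs check against \eqref{v0dot_prop}--\eqref{rdot_prop}. The one presentational nit is that self-adjointness of $P_\perp$ with respect to the indefinite metric follows most directly from antisymmetry of $\bm{F}_0$ (so $\bm{F}_0^2$ is self-adjoint) rather than from \eqref{acs}, but the claim you use is true.
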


\begin{proposition}
The system of ordinary differential equations \eqref{v0dot_prop}-\eqref{rdot_prop} comprises a fast-slow system with slow variable $x = (R,V^0,V^3)$ and fast variable $y = (V^1,V^2)$. The function $f_\epsilon(x,y)=(dV^1/d\tau,dV^2/d\tau)$ is given by
\begin{align}
f_0(x,y) & = \begin{pmatrix}
-\zeta\,V^2\langle \bm{{F}}_0\,e_1,e_2\rangle\\
\zeta\,V^1\langle \bm{{F}}_0\,e_1,e_2\rangle
\end{pmatrix},
\end{align}
\begin{align}
f_1(x,y) & = \begin{pmatrix}
- \langle \zeta\,\bm{{F}}_1\,e_1,V\rangle -\left\langle V_\parallel,\nabla_V\left[\frac{\bm{{F}}_0^2}{\omega_0^2}\right]\,e_1  \right\rangle + V^2\mathcal{R}(V) - \mathcal{M} \langle e_1, \nabla\omega_0\rangle\\
- \langle \zeta\,\bm{{F}}_1\,e_2,V\rangle -\left\langle V_\parallel,\nabla_V\left[\frac{\bm{{F}}_0^2}{\omega_0^2}\right]\,e_2  \right\rangle - V^1\mathcal{R}(V) -\mathcal{M}\langle e_2,\nabla\omega_0\rangle
\end{pmatrix}.
\end{align}
\end{proposition}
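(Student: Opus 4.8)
The plan is to verify, term by term, the two conditions in the definition of a fast-slow system against the equations \eqref{v0dot_prop}--\eqref{rdot_prop}. First I would separate orders in $\epsilon$: every term on the right-hand side of the $R$-, $V^0$-, and $V^3$-equations carries an overall factor $\epsilon$, so $x=(R,V^0,V^3)$ is the slow variable and those right-hand sides define $\epsilon\,g_\epsilon(x,y)$ with $g_\epsilon$ smooth in $(\epsilon,x,y)$; the $V^1$- and $V^2$-equations each carry an $O(1)$ piece, so $y=(V^1,V^2)$ is fast and $f_\epsilon=(dV^1/d\tau,dV^2/d\tau)$. To bring the leading term of $f_\epsilon$ into the asserted form, I would expand $V=V^0e_0+V^1e_1+V^2e_2+V^3e_3$ inside $-\langle\zeta\,\bm{F}_0e_1,V\rangle$: since $\bm{F}_0$ preserves $K^\perp=\mathrm{span}(e_1,e_2)$ (Lemma \ref{NL_nearly_periodic_structure}), the $e_0,e_3$ contributions drop out, and $\langle\bm{F}_0e_1,e_1\rangle=0$ by antisymmetry of $\bm{F}_0$, leaving $-\zeta V^2\langle\bm{F}_0e_1,e_2\rangle$; the same manipulation applied to the $V^2$-equation gives $\zeta V^1\langle\bm{F}_0e_1,e_2\rangle$. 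The coefficient $f_1$ is then simply the collection of the explicitly $O(\epsilon)$ terms already displayed in \eqref{v0dot_prop}--\eqref{rdot_prop}, with the common factor $\epsilon$ removed; no further rewriting is needed.

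The one substantive step is invertibility of $D_yf_0$ on the zero set of $f_0$. From the formula just obtained, $f_0(x,y)=\zeta\,\langle\bm{F}_0e_1,e_2\rangle\,(-V^2,V^1)$, so $D_yf_0=\zeta\,\langle\bm{F}_0e_1,e_2\rangle\,J$, where $J$ is the matrix of rotation by $\pi/2$ in the plane; hence $\det D_yf_0=\langle\bm{F}_0e_1,e_2\rangle^2$. I would then show this is strictly positive everywhere. Because $\bm{F}_0$ annihilates the parallel flat $K$ and preserves $K^\perp$, it is block-diagonal with respect to $TM=K\oplus K^\perp$, so $\mathrm{tr}(\bm{F}_0^2)=\mathrm{tr}((\bm{F}_0^\perp)^2)$; and with respect to the orthonormal frame $(e_1,e_2)$ of the Euclidean plane $K^\perp$ the matrix of $\bm{F}_0^\perp$ equals $\langle\bm{F}_0e_1,e_2\rangle\,J$, whence $\mathrm{tr}((\bm{F}_0^\perp)^2)=-2\,\langle\bm{F}_0e_1,e_2\rangle^2$. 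Comparing with the definition $\omega_0=\sqrt{-\mathrm{tr}(\bm{F}_0^2)/2}$ from Lemma \ref{NL_nearly_periodic_structure} gives $\langle\bm{F}_0e_1,e_2\rangle^2=\omega_0^2$, which is nowhere zero under the magnetization assumption. Therefore $\det D_yf_0=\omega_0^2>0$ on all of $TM$, in particular on $\{f_0=0\}$, and the system \eqref{v0dot_prop}--\eqref{rdot_prop} is fast-slow.

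I do not expect a genuine obstacle: the proposition is essentially bookkeeping, and its only nontrivial ingredient --- non-vanishing of $\langle\bm{F}_0e_1,e_2\rangle$ --- is precisely the content of the complex-structure identity \eqref{acs} already established in the proof of Lemma \ref{NL_nearly_periodic_structure}. The place that needs care is the signature and index conventions of the mostly-plus Minkowski metric: one must confirm that $e_1,e_2$ are spacelike so that $\langle\cdot,\cdot\rangle$ restricts to a genuine positive-definite inner product on $K^\perp$, which is what legitimizes reducing $D_yf_0$ to a scalar multiple of a rotation, and one must track signs carefully when passing among $\bm{F}_0$, $\bm{F}_0^\perp$, and $\omega_0$.
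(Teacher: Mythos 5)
Your proof is correct. The paper states this proposition without proof, and your argument is exactly the routine verification it implicitly relies on: the $R$-, $V^0$-, $V^3$-equations are manifestly $O(\epsilon)$, the $O(1)$ part of the $V^1,V^2$-equations reduces to $\mp\zeta V^{2,1}\langle \bm{F}_0 e_1,e_2\rangle$ because $\bm{F}_0$ annihilates $K=\mathrm{span}(e_0,e_3)$ and is antisymmetric, and the only substantive point, $\det D_yf_0=\langle\bm{F}_0 e_1,e_2\rangle^2=\omega_0^2>0$, follows as you say from the block structure of $\bm{F}_0$, the definition of $\omega_0$ in Lemma \ref{NL_nearly_periodic_structure}, and the fact (Lemma \ref{frame_prop}) that $e_1,e_2$ are spacelike, so the restriction of $\langle\cdot,\cdot\rangle$ to $K^\perp$ is positive definite.
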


\begin{proposition}
The first two coefficients of the formal slow manifold $y_\epsilon^*= (V^{1*}_\epsilon,V^{2*}_\epsilon)$ for the fast-slow system \eqref{v0dot_prop}-\eqref{rdot_prop} are given by
\begin{align}
\begin{pmatrix}
V^{1*}_0\\
V^{2*}_0
\end{pmatrix}=
\begin{pmatrix}
0\\
0
\end{pmatrix},
\end{align}
\begin{align}
\begin{pmatrix}
V^{1*}_1\\
V^{2*}_1
\end{pmatrix}=
\frac{\zeta\,\langle e_2,\bm{F}_0\,e_1\rangle }{\omega_0^2}\begin{pmatrix}
\langle \zeta\,\bm{F}_1\,e_2,V_\parallel\rangle + \langle V_\parallel,\nabla_{V_\parallel}\left[\frac{\bm{F}_0^2}{\omega_0^2}\right]\,e_2\rangle + \mathcal{M}\langle e_2,\nabla\omega_0\rangle\\
-\langle \zeta\,\bm{F}_1\,e_1,V_\parallel\rangle - \langle V_\parallel,\nabla_{V_\parallel}\left[\frac{\bm{F}_0^2}{\omega_0^2}\right]\,e_1\rangle - \mathcal{M}\langle e_1,\nabla\omega_0\rangle
\end{pmatrix}.
\end{align}
In particular, if $(V_\perp)^*_\epsilon = (V^1)^*_\epsilon\,e_1 + (V^2)^*_\epsilon\,e_2$, we have
\begin{align}
    (V_\perp)^*_\epsilon = \epsilon\,\frac{\zeta\,\bm{F}_0}{\omega_0^2}\bigg( \zeta\,\bm{F}_1\,V_\parallel + \nabla_{V_\parallel}\left[\frac{\bm{F}_0^2}{\omega_0^2}\right]\,V_\parallel - \mathcal{M}\nabla\omega_0\bigg)+O(\epsilon^2).
\end{align}
\end{proposition}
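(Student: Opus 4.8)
The plan is to apply Proposition~\ref{prop:slow-manifold-series} and grind out the two algebraic relations \eqref{eq:fast-slow-y0} and \eqref{eq:fast-slow-y1}. I would first pin down $y_0^*$ from $f_0(x,y_0^*(x))=0$. Since the two components of $f_0$ are $-\zeta V^2\langle\bm{F}_0 e_1,e_2\rangle$ and $\zeta V^1\langle\bm{F}_0 e_1,e_2\rangle$, it suffices that the scalar $\langle\bm{F}_0 e_1,e_2\rangle$ is nowhere zero. This follows from Lemma~\ref{NL_nearly_periodic_structure}: the identity \eqref{acs} says $\bm{F}_0^\perp/\omega_0$ is a complex structure on the perpendicular flat $K^\perp$, and $(e_1,e_2)$ frames $K^\perp$ (Lemma~\ref{frame_prop}), so $\bm{F}_0 e_1=\pm\omega_0 e_2$, giving $\langle\bm{F}_0 e_1,e_2\rangle=\pm\omega_0\neq 0$. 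Hence $f_0(x,y_0^*(x))=0$ forces $V^1=V^2=0$, i.e. $y_0^*\equiv 0$; equivalently the limiting slow manifold is $\{V_\perp=0\}$, on which $V=V_\parallel$.

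Next I would extract $y_1^*$ from \eqref{eq:fast-slow-y1}. Because $f_0$ is linear in $y=(V^1,V^2)$, the transverse derivative $D_yf_0(x,0)$ is the $y$-independent matrix $\begin{pmatrix}0 & -a\\ a & 0\end{pmatrix}$ with $a=\zeta\langle\bm{F}_0 e_1,e_2\rangle$; it is invertible (reconfirming the fast--slow property), with inverse $a^{-1}\begin{pmatrix}0 & 1\\ -1 & 0\end{pmatrix}$, and $a^{-1}=a/\omega_0^2=\zeta\langle e_2,\bm{F}_0 e_1\rangle/\omega_0^2$ since $a^2=\omega_0^2$. Because $y_0^*$ is the zero function, $Dy_0^*=0$, so \eqref{eq:fast-slow-y1} collapses to $0=D_yf_0(x,0)[y_1^*]+f_1(x,0)$, hence $y_1^*=-[D_yf_0(x,0)]^{-1}f_1(x,0)$. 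Evaluating $f_1$ at $V_\perp=0$ sets $V=V_\parallel$, turns the subscript $\nabla_V$ into $\nabla_{V_\parallel}$, and kills the $V^1\mathcal{R}$ and $V^2\mathcal{R}$ terms; the surviving three terms in each component are precisely those displayed, and left-multiplying the resulting vector by $-[D_yf_0(x,0)]^{-1}=a^{-1}\begin{pmatrix}0 & -1\\ 1 & 0\end{pmatrix}$ produces the stated $(V^{1*}_1,V^{2*}_1)$ with the prefactor $\zeta\langle e_2,\bm{F}_0 e_1\rangle/\omega_0^2$.

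For the ``in particular'' identity I would form $(V_\perp)^*_\epsilon=\epsilon\big(V^{1*}_1 e_1+V^{2*}_1 e_2\big)+O(\epsilon^2)$ and repackage the tetrad components into a covariant expression. The ingredients are: antisymmetry of $\bm{F}_1$, which rewrites $\langle\zeta\bm{F}_1 e_i,V_\parallel\rangle$ as $\mp\langle e_i,\zeta\bm{F}_1 V_\parallel\rangle$; symmetry of $\bm{F}_0^2$, hence of $\nabla_{V_\parallel}[\bm{F}_0^2/\omega_0^2]$, which lets me do the same for the $\nabla_{V_\parallel}[\bm{F}_0^2/\omega_0^2]$ term; and the complex-structure relations $\bm{F}_0 e_1=\pm\omega_0 e_2$, $\bm{F}_0 e_2=\mp\omega_0 e_1$ together with $\bm{F}_0|_K=0$, which convert the $90^{\circ}$-rotation $\langle e_1,W\rangle e_2-\langle e_2,W\rangle e_1$ into $\pm\omega_0^{-1}\bm{F}_0 W$ for an arbitrary vector $W$. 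All sign ambiguities cancel against $a=\pm\zeta\omega_0$ in the prefactor, leaving the asserted $(V_\perp)^*_1=\tfrac{\zeta\bm{F}_0}{\omega_0^2}\big(\zeta\bm{F}_1 V_\parallel+\nabla_{V_\parallel}[\bm{F}_0^2/\omega_0^2]V_\parallel-\mathcal{M}\nabla\omega_0\big)$.

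The step I expect to be the main obstacle is not this linear algebra but the moving-frame bookkeeping that underlies the $f_1$ formula being used: in particular, checking that the frame-derivative contribution produced by differentiating $V^1=\langle e_1,V\rangle$ along the flow is correctly expressed through the projector identity $P_\parallel\nabla_V e_1=-\nabla_V[\bm{F}_0^2/\omega_0^2]e_1$, obtained by differentiating $P_\parallel e_1=0$ with $P_\perp=-\bm{F}_0^2/\omega_0^2$ as in \eqref{projector_formulas}, and that the $\omega_0$-powers and signs line up consistently between $\langle\bm{F}_0 e_1,e_2\rangle=\pm\omega_0$, the inverse $a^{-1}$, and the final $\zeta\bm{F}_0/\omega_0^2$ prefactor. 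Once the tetrad computation of $f_0,f_1$ is trusted, the slow-manifold coefficients follow mechanically from Proposition~\ref{prop:slow-manifold-series}.
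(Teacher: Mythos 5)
Your route is the intended one: everything follows mechanically from Proposition~\ref{prop:slow-manifold-series}. Your determination of $y_0^*$ is correct — $\langle \bm{F}_0 e_1,e_2\rangle=\pm\omega_0\neq 0$ by \eqref{acs} and Lemma~\ref{frame_prop}, so $f_0=0$ forces $V^1=V^2=0$ — and since $Dy_0^*=0$, the relation \eqref{eq:fast-slow-y1} indeed collapses to $y_1^*=-[D_yf_0(x,0)]^{-1}f_1(x,0)$, which, with $D_yf_0=\begin{pmatrix}0&-a\\ a&0\end{pmatrix}$, $a=\zeta\langle \bm{F}_0e_1,e_2\rangle$, $a^{-1}=\zeta\langle e_2,\bm{F}_0e_1\rangle/\omega_0^2$, reproduces the displayed $(V^{1*}_1,V^{2*}_1)$ exactly. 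Your identification of the frame-derivative bookkeeping via $P_\parallel\nabla_Ve_1=\nabla_V(P_\perp)e_1$ with $P_\perp=-\bm{F}_0^2/\omega_0^2$ from \eqref{projector_formulas} is also the right way to certify the Lemma's $f_1$.

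The gap is in the last step, the covariant repackaging. You claim that symmetry of $\nabla_{V_\parallel}[\bm{F}_0^2/\omega_0^2]$ lets you ``do the same'' as for the antisymmetric $\bm{F}_1$ term, i.e.\ move the operator across the inner product with a sign flip. Symmetry gives exactly the opposite: $\langle V_\parallel,\nabla_{V_\parallel}[\bm{F}_0^2/\omega_0^2]e_i\rangle=+\langle e_i,\nabla_{V_\parallel}[\bm{F}_0^2/\omega_0^2]V_\parallel\rangle$, with no sign change, whereas $\langle \zeta\bm{F}_1 e_i,V_\parallel\rangle=-\langle e_i,\zeta\bm{F}_1V_\parallel\rangle$. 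Carrying the rewriting out consistently (using $\bm{F}_0e_1=ce_2$, $\bm{F}_0e_2=-ce_1$ with $c=\langle e_2,\bm{F}_0e_1\rangle$, so that $-\langle e_2,W\rangle e_1+\langle e_1,W\rangle e_2=\bm{F}_0W/c$ and the factors of $c$ cancel) yields
\begin{align*}
(V_\perp)^*_1=\frac{\zeta\,\bm{F}_0}{\omega_0^2}\Big(\zeta\,\bm{F}_1V_\parallel-\nabla_{V_\parallel}\big[\bm{F}_0^2/\omega_0^2\big]V_\parallel-\mathcal{M}\nabla\omega_0\Big),
\end{align*}
i.e.\ the middle term acquires the opposite sign from the ``in particular'' display in the proposition. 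Since the component formula is the one that genuinely follows from the Lemma's $f_1$ (whose $-\epsilon\langle V_\parallel,\nabla_V[\bm{F}_0^2/\omega_0^2]e_1\rangle$ term you can verify directly from $\langle\nabla_Ve_1,V_\parallel\rangle=\langle P_\parallel\nabla_Ve_1,V_\parallel\rangle$), your component result is right but your derivation of the printed covariant formula does not go through as stated; the sign discrepancy points to a typo in the paper's covariant display rather than a cancellation of ``sign ambiguities,'' and your write-up should either correct that step and flag the discrepancy or it fails for the final display.
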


Now it is simple to demonstrate that slow manifold dynamics relativistic Pauli system approximately agrees with the covariant guiding center theory developed by Boghosian \cite{Boghosian_2003}. As explained in \cite{Burby_loops_2019}, dynamics on the slow manifold is necessarily Hamiltonian. The corresponding symplectic form $\Omega^* = -\mathbf{d}\vartheta_\epsilon^*$ is given by pulling back $\Omega_\epsilon$ along the mapping $x\mapsto (x,y_\epsilon^*)$, which leads to
\begin{align}
    \vartheta_\epsilon^* = \zeta\,A\,+\epsilon\,\langle V_\parallel,dR\rangle + O(\epsilon^2).
\end{align}
This $1$-form agrees with the $1$-form reported in Eq.\,(3.489) in \cite{Boghosian_2003} to the displayed order. For agreement, we use $V_\parallel = V^1\,e_1 + V^2\,e_2 = K\hat{\bm{t}}$, where $K$ and $\hat{\bm{t}}$ are defined by Boghosian. The slow manifold Hamiltonian is given by pulling back the Pauli Hamiltonian along the same map, giving
\begin{align}
   \mathcal{H}_\epsilon^* = \epsilon^2\,\left(\frac{1}{2}\langle V_\parallel,V_\parallel\rangle + \mu\,\omega_0 \right) + O(\epsilon^3).
\end{align}
Since $\langle V_\parallel,V_\parallel\rangle = -K$, where the right-hand-side uses Boghosian's notation, this Hamiltonian agrees with Eq.\,(3.488) from \cite{Boghosian_2003} to the displayed order. We conclude that slow manifold dynamics for the relativistic Pauli system agree with relativistic guiding center theory to the same order as in the non-relativistic case. This implies in particular that the strategy underlying Xiao and Qin's numerical integration scheme \cite{Xiao_2020} may be applied in the covariant relativistic setting as well.

Finally, we will demonstrate that the slow manifold for the relativistic Pauli system enjoys long-term normal stability. For this, we will show that the first nontrivial term in the adiabatic invariant for this system has sign-definite second variation along the limiting slow manifold $V^1 = V^2 = 0$. Let $\mu_\epsilon = \mu_0 + \epsilon\,\mu_1 + \epsilon^2\,\mu_2 + \dots$ denote the adiabatic invariant series for the Pauli system. According to Eq.\,(3.14) in \cite{Burby_Squire_2020}, $\mu_0 = \iota_{R_0}\langle \vartheta_0\rangle$, where $R_0$ is defined in Lemma \ref{NL_nearly_periodic_structure}, $\vartheta_0 = \zeta\,A_0$, and the angle brackets denote averaging over the $U(1)$-action $\Phi_\theta$ generated by $R_0$, i.e. that given in \eqref{relativistic_circle_action}. Since $\Phi_\theta$ leaves the $4$-position $R$ fixed $\langle \vartheta_0\rangle = \vartheta_0$, and since $R_0$ has only velocity components, we conclude $\mu_0=0$. According to Eq.\,(3.15) in \cite{Burby_Squire_2020}, $\mu_1 = \iota_{R_0}\langle \vartheta_1\rangle$, where $\vartheta_1 = \zeta\,A_1 + \langle V,dR \rangle$. Again using the fact that $\Phi_\theta$ leaves $R$ fixed, the average is simple to compute, giving $\langle \vartheta_1\rangle = \zeta\,A_1 + \langle V_\parallel,dR\rangle $. Therefore the contraction $\iota_{R_0}\langle \vartheta_1\rangle $ vanishes again, giving $\mu_1 =0$. Finally using Eq.\,(3.16) in \cite{Burby_Squire_2020}, we find that $\mu_2 = \frac{1}{2}\langle \mathbf{d}\vartheta_0(\mathcal{L}_{R_0}I_0\widetilde{\mathcal{X}}_1,I_0\widetilde{\mathcal{X}}_1)\rangle$, where $\mathcal{X}_1$ is the first-order term in $\mathcal{X}_\epsilon$, $\widetilde{\mathcal{X}}_1 =\mathcal{X}_1 - \langle \mathcal{X}_1\rangle $, and $I_0 = \mathcal{L}_{\omega_0\,R_0}^{-1}$. Since $\mathbf{d}\vartheta_0 = F_0$, we only need to compute the $R$-components of the vector fields $I_0\widetilde{\mathcal{X}}_1$ and $\mathcal{L}_{R_0}I_0\widetilde{\mathcal{X}}_1$. For this purpose, we observe that the $R$-component of the vector field $\mathcal{X}_1^\theta = \Phi_\theta^*\mathcal{X}_1$ is given by
\begin{align}
    (\mathcal{X}_1^\theta)^R = P_\parallel\,V + [\cos\theta\,\mathbb{I}_\perp + \sin\theta\,\zeta\,\bm{F}_0^\perp/\omega_0]P_\perp\,V,
\end{align}
from which we infer
\begin{align*}
    (I_0\widetilde{\mathcal{X}}_1^\theta)^R &= \frac{1}{\omega_0} [\sin\theta\,\mathbb{I}_\perp - \cos\theta\,\zeta\,\bm{F}_0^\perp/\omega_0]P_\perp\,V\\
    (\mathcal{L}_{R_0}I_0\widetilde{\mathcal{X}}_1^\theta)^R& = \frac{1}{\omega_0} [\cos\theta\,\mathbb{I}_\perp + \sin\theta\,\zeta\,\bm{F}_0^\perp/\omega_0]P_\perp\,V.
\end{align*}
The second-order adiabatic invariant is therefore
\begin{align}
\mu_2 & =\frac{1}{2} \frac{1}{2\pi}\int_0^{2\pi} \left\langle \frac{1}{\omega_0} [\cos\theta\,\mathbb{I}_\perp + \sin\theta\,\zeta\,\bm{F}_0^\perp/\omega_0]P_\perp\,V,\bm{F}_0\left(\frac{1}{\omega_0} [\sin\theta\,\mathbb{I}_\perp - \cos\theta\,\zeta\,\bm{F}_0^\perp/\omega_0]P_\perp\,V\right)\right\rangle\,d\theta\nonumber\\
& = -\frac{\zeta}{\omega_0}\frac{1}{2\pi}\int_0^{2\pi}\left\langle V_\perp,[\bm{F}_0/\omega_0]^2 V_\perp \right\rangle\,\sin^2\theta\,d\theta\nonumber\\
& = \frac{\zeta\,\langle V_\perp,V_\perp\rangle}{2\omega_0}.
\end{align}
Since $K_R^\perp$ is space-like for each $R$, the Hessian of $\mu_2$ along $V^1=V^2 = 0$ is sign-semi-definite, much as in the relativistic case. By theorem \ref{free_action_thm} with $d = \nu = 2$ and energy conservation, we conclude that if a trajectory for the relativistic Pauli system with a bounded spatial component $R(t)$ begins within $\epsilon$ of $V^1 = V^2 = 0$, then it will remain within $\epsilon^{1/2}$ over large time intervals. It is also not difficult to show using Theorem \ref{free_action_thm} that the normal deviation from an $N^{\text{th}}$-order slow manifold will be bound by $\epsilon^{(N+1)/2}$ for trajectories that begin within $\epsilon^{N+1}$.

\subsection{The symplectic Lorentz embedding\label{symplectic_lorentz_sec}}
As a final application, we will study a general method for embedding symplectic Hamiltonian systems as normally-stable elliptic slow manifolds in higher-dimension Lagrangian systems with regular Lagrangians. This method applies in particular to the non-canonical guiding center system, but differs from the Pauli embeddings studied in Sections \ref{eq:classical_pauli} and \ref{rel_pauli_sec} in an essential manner; where the dimensionality of either the non-relativistic or relativistic Pauli systems is two greater than that of the corresponding guiding center system, the dimension of the embedding space studied in this section is twice that of the system being embedded. We emphasize, however, that this method of embedding is \emph{not} equivalent to the method of formal Lagrangians \cite{Ibragimov_2006,Ibragimov_2007}. The formal Lagrangian technique does not embed using slow manifolds, and does not lead to regular Lagrangians, in contrast to the method described here. 

The basic idea behind our construction may be described as follows. Let $X$ be a Hamiltonian system with Hamiltonian $H$ on an exact symplectic manifold $(M,\beta)$ equipped with a Riemannian metric $g$ and symplectic form $\beta = -\mathbf{d}\alpha$. We would like to embed $X$ in a larger system with a regular Lagrangian structure. To do this, we consider the dynamics of a charged particle with small mass $\epsilon >0$ and unit positive charge moving on $M$. The magnetic field this particle experiences is given by the symplectic $2$-form $\beta$. The electric field is given by $-\nabla H$, i.e. the Hamiltonian serves as an electrostatic potential. As the particle mass $\epsilon$ becomes smaller, the timescale for gyration around the magnetic field shrinks. In fact, as the following detailed analysis will demonstrate, the metric $g$ on $M$ can always be chosen to ensure the particle's gyration around the magnetic field becomes periodic with short period as $\epsilon\rightarrow 0$. Therefore the equations of motion for this particle comprise a nearly-periodic system, and, according to the theory developed in this Article, admit slow manifolds of each order near which the rapid gyrations are suppressed. Strikingly, the corresponding slow manifold dynamics recover the original dynamics defined by $X$ to leading order in $\epsilon$. From the perspective of the small-mass particle moving on $M$, the original dynamics is recovered as a generalized $E\times B$-drift. Moreover, normal stability of the slow manifold emerges, by way of Theorem \ref{free_action_thm}, as a consequence of adiabatic invariance of a generalized magnetic moment.

As a first step in a detailed description of this embedding technique, we review the construction of a Riemannian metric ``compatible" with a given symplectic form $\beta$ on a manifold $M$. 

\begin{lemma}\label{compatible_lemma}
Given a symplectic manifold $(M,\beta)$, there exists a Riemannian metric $g$ on $M$ and an almost complex structure $\mathbb{J}:TM\rightarrow TM$ such that $g(V,W) = \beta(V,\mathbb{J}W)$ for each pair of vector fields $V,W$ on $M$.
\end{lemma}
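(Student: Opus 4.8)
The plan is to carry out the classical polar-decomposition construction of a compatible almost complex structure. First I would fix an auxiliary Riemannian metric $h$ on $M$ — for instance the metric $g$ already attached to $M$ by our standing conventions — and use it together with $\beta$ to define a smooth bundle map $A:TM\to TM$ by the requirement $\beta(V,W) = h(AV,W)$ for all vector fields $V,W$. Nondegeneracy of $\beta$ makes $A$ invertible, and antisymmetry of $\beta$ together with symmetry of $h$ forces $A$ to be skew-adjoint with respect to $h$, i.e. $h(AV,W) = -h(V,AW)$ for all $V,W$.

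Next I would form $-A^2 = A^{\mathsf{T}}A$, where ${}^{\mathsf{T}}$ denotes the $h$-adjoint; this is a smooth section of $\mathrm{End}(TM)$ that is fiberwise symmetric and positive-definite with respect to $h$. Because the matrix square root is a smooth map on the open cone of positive-definite symmetric endomorphisms, there is a unique smooth bundle map $P:TM\to TM$ with $P^2 = -A^2$ and $P$ fiberwise $h$-symmetric positive-definite; moreover $P$, being obtained from $A^2$ by functional calculus, commutes with $A$, and $P^{-1}$ is likewise smooth. I then set $\mathbb{J} = P^{-1}A$ and $g(V,W) = h(PV,W)$.

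It remains to verify the claimed properties, each a short pointwise computation carried along smoothly. Symmetry of $g$ follows from $h$-symmetry of $P$, and positive-definiteness of $g$ from positive-definiteness of $P$, so $g$ is a genuine Riemannian metric. For $\mathbb{J}^2 = -\mathrm{id}_{TM}$ one uses that $P$ and $A$ commute: $\mathbb{J}^2 = P^{-1}AP^{-1}A = P^{-2}A^2 = (-A^2)^{-1}A^2 = -\mathrm{id}_{TM}$, so $\mathbb{J}$ is an almost complex structure. Finally, compatibility follows from the chain $\beta(V,\mathbb{J}W) = h(AV,P^{-1}AW) = h(P^{-1}AV,AW) = -h(AP^{-1}AV,W) = -h(P^{-1}A^2V,W) = h(PV,W) = g(V,W)$, where the manipulations use $h$-symmetry of $P^{-1}$, $h$-skewness of $A$, commutativity of $A$ with $P^{-1}$, and the identity $AP^{-1}A = P^{-1}A^2 = -P$.

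The only genuinely non-elementary point is the smoothness of $P$: one must know that $S\mapsto S^{1/2}$ is a smooth map on positive-definite symmetric endomorphisms, which can be justified either by applying the inverse function theorem to $X\mapsto X^2$ at a positive-definite $X$, or by the contour-integral formula $P = \tfrac{1}{2\pi i}\oint \sqrt{z}\,(z\,\mathrm{id}_{TM} + A^2)^{-1}\,dz$ with the branch cut and contour chosen to avoid the (positive) spectrum of $-A^2$; everything else is pointwise linear algebra. I expect this smoothness verification, rather than any conceptual difficulty, to be the one step warranting a sentence of care.
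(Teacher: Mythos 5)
Your proposal is correct and follows essentially the same route as the paper: an auxiliary metric, the skew-adjoint bundle map induced by $\beta$, its polar-decomposition square root $P=\sqrt{-A^2}$, and $\mathbb{J}=P^{-1}A$ (the paper writes the equivalent $\mathbb{J}=\bm{\beta}_G^{-1}\sqrt{\bm{S}}_G$ and defines $g(V,W)=\beta(V,\mathbb{J}W)$, which coincides with your $h(PV,W)$). Your explicit remark on the smoothness of the fiberwise square root is a point the paper's proof passes over silently, and your verification chain is sound.
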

\begin{proof}
The proof is well-known, see for instance \cite{Silva_book_2008}, but we give a reproduction here for completeness and to emphasize its constructive character.

Let $G$ be an arbitrary Riemannian metric on $M$. The symplectic form $\beta$ induces an antisymmetric, non-singular bundle map $\bm{\beta}_G:TM\rightarrow TM$ by requiring $G(V,\bm{\beta}_G\,W) = \beta(V,W)$ for each pair of vector fields $V,W$. The associated bundle map $\bm{S}_G = -\bm{\beta}_G\,\bm{\beta}_G$ is therefore symmetric positive-definite with symmetric positive-definite square root $\sqrt{\bm{S}}_G$. We define the desired almost complex structure $\mathbb{J}$ according to $\mathbb{J} = \bm{\beta}_G^{-1}\,\sqrt{\bm{S}}_G$. To check that this formula does indeed define an almost complex structre, we compute as follows:
\begin{align*}
    \mathbb{J}^2 & = \bm{\beta}_G^{-1}\,\sqrt{\bm{S}}_G\,\bm{\beta}_G^{-1}\,\sqrt{\bm{S}}_G\nonumber\\
    & = -\bm{\beta}_G^{-1}\,\bm{\beta}_G^{-1}\,\bm{\beta}_G\,\bm{\beta}_G\nonumber\\
    & = -\mathbb{I}.
\end{align*}
Here we have used the fact that $\bm{S}_G$, and therefore $\sqrt{\bm{S}}_G$, commutes with $\bm{\beta}_G$. We then define the Riemannian metric $g$ by requiring the desired identity $g(V,W) = \beta(V,\mathbb{J}\,W)$ holds for arbitrary vector fields $V,W$. To show that the tensor $g$, thus defined, is symmetric and positive definite, we observe
\begin{align*}
    g(V,W) & = \beta(V,\mathbb{J}W) = G(V,\bm{\beta}_G\,\bm{\beta}_G^{-1}\,\sqrt{\bm{S}}_G\,W) =G(V,\sqrt{\bm{S}}_G\,W) = g(W,V),
\end{align*}
by symmetry of $\sqrt{\bm{S}}_G$, and for non-zero $V$
\begin{align*}
    g(V,V) = G(V,\sqrt{\bm{S}}_G\,V)>0,
\end{align*}
by positive-definiteness of $\sqrt{\bm{S}}_G$.
\end{proof}

We will now describe our embedding technique in detail. Let $(M,\beta)$ be an exact symplectic manifold. Using Lemma \ref{compatible_lemma}, choose a Riemannian metric $g$ on $M$ and an almost-complex structure $\mathbb{J}$ such that $g(V,W) = \beta(V,\mathbb{J}W)$ for arbitrary vector fields $V,W$. Let $X$ be a Hamiltonian system on $M$ with Hamiltonian $H$. We would like to embed the dynamics defined by $X$ as slow manifold dynamics in a larger system with a regular Lagrangian structure.

The phase space for this larger system will be the tangent bundle $TM$, points of which will be denoted $(R,V)\in T_RM$. For $\epsilon\in\mathbb{R}$, we introduce the exact, regular, barely-symplectic form $\Omega_\epsilon = -\mathbf{d}\vartheta_\epsilon$ on $TM$ with primitive
\begin{align}
    \vartheta_\epsilon = \pi^*\alpha + \epsilon\,g_R(V,dR),
\end{align}
where $\pi:TM\rightarrow M$ denotes the tangent bundle projection and $\alpha$ is a primitive for $\beta = -\mathbf{d}\alpha$.
We also introduce the $\epsilon$-dependent Hamilton function $\mathcal{H}_\epsilon(R,V) = \epsilon\,H(R)+\epsilon^2\tfrac{1}{2}\,g_R(V,V)$. The \textbf{symplectic Lorentz system} is the smooth $\epsilon$-dependent vector field $\mathcal{X}_\epsilon$ on $TM$ defined by the Hamilton equation $\iota_{\mathcal{X}_\epsilon}{\Omega}_\epsilon = \mathbf{d}\mathcal{H}_\epsilon$. Our goal is to prove that the symplectic Lorentz system contains a slow manifold whose slow dynamics agrees with those of $X$ to leading order in $\epsilon$. Moreover, we would like to establish normal stability of this slow manifold using Theorem \ref{free_action_thm}. As in the previous sections on Pauli embeddings, we will proceed by showing $\mathcal{X}_\epsilon$ is a nearly-periodic Hamiltonian system, identifying the associated slow manifold, and then showing that the adiabatic invariant has sign-semi-definite second variation along the slow manifold.

To see that $\mathcal{X}_\epsilon$ is nearly-periodic, suppose that $(R(t),V(t))$ is an $\mathcal{X}_\epsilon$-integral curve. It is not difficult to show that this curve must satisfy the system of evolution equations
\begin{align}
    \frac{DV}{dt} = \mathbb{J}\,V - \nabla H,\quad \frac{dR}{dt} =\epsilon\,V,\label{symplectic_Lorentz_system}
\end{align}
where $DV/dt$ denotes the covariant derivative of $V$ along the curve $R$. Also the analogy to the charged particle is now transparent: $\mathbb{J}V$ is the ``$\bm{v}\times\bm{B}$" term and $-\nabla H$ is the "electrostatic electric field".
To see how \eqref{symplectic_Lorentz_system} emerge, write the Lagrangian in component form $L=(\alpha_i+\epsilon V^jg_{ij})\dot{R}^i-\epsilon(H+\epsilon V^iV^jg_{ij}/2)$ and obtain the Euler-Lagrange equations $\dot{R}^i=\epsilon V^i$ and $g_{ij}\dot{V}^j+\epsilon V^jV^k\Gamma_{ijk}=-\beta_{ij}V^j-\partial_iH$, where $\Gamma_{ijk}$ is the Christoffel symbol of the first kind. The vector form is then recovered after identifying $d(V^j\bm{e}_j)/dt\cdot\bm{e}_i=g_{ij}\dot{V}^j+\epsilon V^jV^k\Gamma_{ijk}$, $\bm{e}^i\partial_iH=\nabla H$, and $-\bm{e}^i\Omega_{ij}V^j=\bm{e}^ig_{ik}\mathbb{J}^k_{\ j}V^ j=\mathbb{J}V$.

In particular, when $\epsilon = 0$, we must have $dR/dt = 0$ and $dV/dt = \mathbb{J}\,V - \nabla H$. The solution to this limit system is $(R(t),V(t)) = \Phi_{\theta}(R(0),V(0))$, where $\Phi_\theta:TM\rightarrow TM$ is given by
\begin{align}
    \Phi_\theta(R,V) = (R,-\mathbb{J}\nabla H + \exp(\theta\,\mathbb{J})[V + \mathbb{J}\nabla H]).
\end{align}
Note that since $\exp(\theta\mathbb{J}) = \cos\theta\, \mathbb{I}+ \sin\theta\,\mathbb{J}$, $\Phi_\theta$ defines a $U(1)$-action on $TM$. We may therefore infer that the symplectic Lorentz system is a nearly-periodic system with angular frequency $\omega_0 = 1$ and limiting roto-rate 
\begin{align}
    \label{eq:symplectic_lorentz_limiting_rotorate}
    R_0 = (\mathbb{J}\,V - \nabla H)\,\partial_V.
\end{align}

Next we will show that the symplectic Lorentz system is fast-slow in order to efficiently identify the system’s slow manifold and the corresponding induced slow dynamics.


\begin{lemma}
In the coordinates $(R^i,V^i)$ on $TM$ 
the symplectic Lorentz system \eqref{symplectic_Lorentz_system} is equivalent to \begin{align}
\label{eq:symplectic_lorentz_r}
\dot{R}^i&=\epsilon V^i \\
\label{eq:symplectic_lorentz_v}
g_{ij}\dot{V}^j+\epsilon V^jV^k\Gamma_{ijk}&=-\beta_{ij}V^j-\partial_iH,
\end{align}
where $\Gamma_{ijk}=\tfrac{1}{2}(\partial_kg_{ij}+\partial_jg_{ki}-\partial_ig_{jk})$ is the Christoffel symbol of the first kind.
\end{lemma}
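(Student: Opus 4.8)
The plan is to pass from the intrinsic Hamilton equation $\iota_{\mathcal{X}_\epsilon}\Omega_\epsilon = \mathbf{d}\mathcal{H}_\epsilon$ on the exact barely-symplectic manifold $(TM,-\mathbf{d}\vartheta_\epsilon)$ to the natural bundle coordinates and read off the two displayed equations. Equivalently --- and more conveniently, for the reason noted at the end --- one recognizes this Hamilton equation as the Euler--Lagrange system of the first-order Lagrangian $L(\xi,\dot\xi)=\langle\vartheta_\epsilon(\xi),\dot\xi\rangle-\mathcal{H}_\epsilon(\xi)$ on $TM$. Let $R^i$ be coordinates on an open subset of $M$ and $(R^i,V^i)$ the induced coordinates on $TM$. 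Then $\vartheta_\epsilon=(\alpha_i+\epsilon\,g_{ij}V^j)\,dR^i$ (it has no $dV$ part), $\mathcal{H}_\epsilon=\epsilon\,H+\tfrac12\epsilon^2 g_{jk}V^jV^k$, and hence $L=(\alpha_i+\epsilon\,g_{ij}V^j)\dot R^i-\epsilon\,H-\tfrac12\epsilon^2 g_{jk}V^jV^k$, exactly the component Lagrangian used in the discussion preceding the Lemma.

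First I would compute the two families of Euler--Lagrange equations. Since $L$ contains no $\dot V^i$, the equation $\tfrac{d}{dt}\partial_{\dot V^i}L-\partial_{V^i}L=0$ collapses to $\partial_{V^i}L=0$, i.e. $\epsilon\,g_{ij}\dot R^j-\epsilon^2 g_{ij}V^j=0$; nondegeneracy of $g$ gives $\dot R^i=\epsilon\,V^i$, which is \eqref{eq:symplectic_lorentz_r}. Substituting this into $\tfrac{d}{dt}\partial_{\dot R^i}L-\partial_{R^i}L=0$: the antisymmetric combination $\dot R^k(\partial_k\alpha_i-\partial_i\alpha_k)$ reproduces the magnetic-force term $-\epsilon\,\beta_{ij}V^j$ via $\beta=-\mathbf{d}\alpha$; the term $\epsilon\,g_{ij}\dot V^j$ comes out of differentiating $\partial_{\dot R^i}L$ along the curve; the Hamiltonian contributes $\epsilon\,\partial_i H$; and the various first derivatives of $g$ (one from $\tfrac{d}{dt}(\epsilon g_{ij}V^j)$, two from $-\partial_{R^i}L$) combine --- after symmetrizing the repeated indices $j,k$ in $V^jV^k$ --- into $\epsilon^2\,V^jV^k\Gamma_{ijk}$ with $\Gamma_{ijk}=\tfrac12(\partial_k g_{ij}+\partial_j g_{ki}-\partial_i g_{jk})$. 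Dividing through by $\epsilon$ yields $g_{ij}\dot V^j+\epsilon\,V^jV^k\Gamma_{ijk}=-\beta_{ij}V^j-\partial_i H$, which is \eqref{eq:symplectic_lorentz_v}.

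To close the loop with \eqref{symplectic_Lorentz_system}, I would unwind the covariant objects: along a curve with $\dot R^i=\epsilon\,V^i$ the covariant derivative $DV/dt$ has components $\dot V^i+\epsilon\,\Gamma^i_{jk}V^jV^k$, so lowering an index with $g$ turns the metric contraction of $DV/dt$ into the left-hand side of \eqref{eq:symplectic_lorentz_v}; meanwhile $(\nabla H)_i=\partial_i H$, and by the compatibility identity $g(V,W)=\beta(V,\mathbb{J}W)$ from Lemma \ref{compatible_lemma} the lowered components of $\mathbb{J}V$ are $-\beta_{ij}V^j$ (equivalently $g_{ik}\mathbb{J}^k_{\ j}=-\beta_{ij}$). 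Hence \eqref{eq:symplectic_lorentz_r}--\eqref{eq:symplectic_lorentz_v} is precisely the coordinate form of \eqref{symplectic_Lorentz_system}. One subtlety worth a sentence: $\Omega_\epsilon$ is genuinely barely-symplectic, with $\Omega_0=\pi^*\beta$ degenerate and $\mathcal{H}_0=0$, so for $\epsilon=0$ the Hamilton equation alone does not pin down $\mathcal{X}_0$; working with $L$ (or, equivalently, taking the smooth $\epsilon\to 0$ limit of the uniquely-defined $\epsilon\neq0$ vector field) selects the correct representative. Apart from that, the only real work is index and sign bookkeeping --- in particular verifying that the $\partial g$ terms assemble into $\Gamma_{ijk}$ and that the $\beta$, $\mathbb{J}$, $\Omega_\epsilon$ conventions are mutually consistent --- which I expect to be the main, if modest, obstacle.
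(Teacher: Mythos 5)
Your proposal is correct and follows essentially the same route as the paper: the paper also writes the phase-space Lagrangian $L=(\alpha_i+\epsilon V^jg_{ij})\dot{R}^i-\epsilon(H+\epsilon V^iV^jg_{ij}/2)$ in the induced coordinates, derives the Euler--Lagrange equations, and then identifies $g_{ij}\dot V^j+\epsilon V^jV^k\Gamma_{ijk}$ with the lowered covariant derivative and $-\beta_{ij}V^j$, $\partial_i H$ with $\mathbb{J}V$, $\nabla H$ via $g_{ik}\mathbb{J}^k_{\ j}=-\beta_{ij}$. Your index bookkeeping and the remark about the $\epsilon=0$ degeneracy are both sound additions, but the substance of the argument is the same.
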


\begin{proposition}
The system of ordinary differential equations \eqref{eq:symplectic_lorentz_r}-\eqref{eq:symplectic_lorentz_v} comprises a fast-slow system with slow variable $x = (R^i)$ and fast variable $y = (V^i)$. The function $f_\epsilon(x,y)=(dV^i/dt)$ is given by
\begin{align}
f^i_0(x,y) & = -g^{ij}\beta_{jk}V^k-g^{ij}\partial_jH,
\end{align}
\begin{align}
f^i_1(x,y) & =-g^{ij}\Gamma_{jk\ell}V^kV^\ell,
\end{align}
and the function $g_\epsilon(x,y)=(dR^i/dt)$ is given by
\begin{align}
g^i_0(x,y) & = V^i.
\end{align}
\end{proposition}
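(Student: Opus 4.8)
The plan is to treat this Proposition as a direct verification: read off $f_\epsilon$ and $g_\epsilon$ from the coordinate equations \eqref{eq:symplectic_lorentz_r}--\eqref{eq:symplectic_lorentz_v} of the preceding Lemma, and then check the single substantive requirement in the definition of a fast-slow system, namely invertibility of the fast Jacobian on the zero set of $f_0$.

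First I would solve \eqref{eq:symplectic_lorentz_v} for $\dot V^i$. Since $g_{ij}$ depends only on $R$ and is invertible (it is a Riemannian metric), contracting with the inverse metric $g^{mi}$ gives
\begin{align*}
\dot V^m = -\,g^{mi}\beta_{ij}V^j - g^{mi}\partial_i H - \epsilon\,g^{mi}\Gamma_{ijk}V^jV^k,
\end{align*}
which is exactly $\dot y = f_\epsilon(x,y)$ with $x=(R^i)$, $y=(V^i)$, $f_\epsilon = f_0 + \epsilon f_1$, $f_0^i = -g^{ij}\beta_{jk}V^k - g^{ij}\partial_j H$, and $f_1^i = -g^{ij}\Gamma_{jk\ell}V^kV^\ell$. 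Note the $\epsilon$-expansion of $f_\epsilon$ truncates after the linear term, and $f_\epsilon$ is manifestly smooth in $(\epsilon,R,V)$, being polynomial in $(\epsilon,V)$ with coefficients smooth in $R$. Equation \eqref{eq:symplectic_lorentz_r} reads $\dot R^i = \epsilon V^i$, so $\dot x = \epsilon\,g_\epsilon(x,y)$ with $g_\epsilon = g_0$ independent of $\epsilon$ and $g_0^i = V^i$. This exhibits the system in the required fast-slow form.

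It then remains to verify that $D_yf_0(x,y)$ is invertible whenever $f_0(x,y)=0$. Because $g^{ij}$, $\beta_{ij}$, and $\partial_j H$ all depend on $x=R$ alone, differentiation of $f_0^i$ with respect to $V^m$ kills every term except the one linear in $V$, so $(D_yf_0)^i{}_m = -g^{ij}\beta_{jm}$. This is the product of the invertible matrices $[g^{ij}]$ (since $g$ is Riemannian) and $[\beta_{jm}]$ (since $\beta$ is symplectic), hence is invertible at every point of $TM$ --- in particular wherever $f_0 = 0$ --- which is even stronger than the fast-slow hypothesis requires. Equivalently, by Lemma \ref{compatible_lemma} and $\mathbb{J}^2 = -\mathbb{I}$ one has $-g^{ij}\beta_{jm} = \mathbb{J}^i{}_m$, so $D_yf_0 = \mathbb{J}$, which is invertible because $\mathbb{J}^{-1} = -\mathbb{J}$.

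There is no serious obstacle; the only points demanding care are confirming that the $\epsilon$-series for $f_\epsilon$ stops at first order while $g_\epsilon$ carries no $\epsilon$-dependence, and keeping the index placement and signs in the identities relating $g_{ij}$, $\beta_{ij}$, and $\mathbb{J}$ consistent so that $D_yf_0$ is correctly recognized as the compatible almost complex structure.
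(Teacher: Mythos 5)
Your verification is correct and is exactly the direct read-off that the paper leaves implicit (the Proposition is stated without a written proof, as an immediate consequence of the coordinate Lemma): contract \eqref{eq:symplectic_lorentz_v} with $g^{ij}$ to isolate $\dot V^i$, identify $f_0$, $f_1$, $g_0$, and check the fast-Jacobian condition. Your observation that $D_yf_0 = -g^{-1}\beta = \mathbb{J}$ is everywhere invertible (consistent with the compatibility $g_{ij}\mathbb{J}^j{}_{m}=-\beta_{im}$ from Lemma \ref{compatible_lemma} and $\mathbb{J}^2=-\mathbb{I}$) is precisely the one substantive point the definition requires, and you have it right.
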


\begin{proposition}
The first two coefficients of the formal slow manifold $y_\epsilon^*= ((V^i)^{*}_\epsilon)$ for the fast-slow system \eqref{eq:classical_pauli_system_x}-\eqref{eq:classical_pauli_system_v2} are given by
\begin{align}
(V^i)_0^\ast\beta_{ij} = \partial_j H
\end{align}
\begin{align}
(V^j)^{\ast}_0\partial_j(V^i)^\ast_0=-g^{ij}\beta_{jk}(V^k)^\ast_1-g^{ij}\Gamma_{jk\ell}(V^k)^\ast_0(V^\ell)^\ast_0
\end{align}
In particular, to leading order the slow dynamics are given by $\dot{R}= -\mathbb{J}\nabla H$, which recovers the original Hamiltonian dynamics on $M$.
\end{proposition}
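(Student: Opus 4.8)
The plan is to obtain both displayed equations as specializations of the general formulas \eqref{eq:fast-slow-y0}--\eqref{eq:fast-slow-y1} from Proposition \ref{prop:slow-manifold-series}, using the explicit coefficient functions $f_0^i$, $f_1^i$, $g_0^i$ recorded in the preceding Proposition. That Proposition already certifies that \eqref{eq:symplectic_lorentz_r}--\eqref{eq:symplectic_lorentz_v} is a fast-slow system, so Proposition \ref{prop:slow-manifold-series} applies and guarantees a unique formal slow manifold; it only remains to transcribe the abstract identities into the present notation.

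First I would substitute $f_0^i(x,y) = -g^{ij}\beta_{jk}V^k - g^{ij}\partial_j H$ into \eqref{eq:fast-slow-y0}. Because $g^{ij}$ is invertible, $f_0^i(x,y_0^*) = 0$ is equivalent to $\beta_{jk}(V^k)_0^* + \partial_j H = 0$; using the antisymmetry of $\beta$ to flip the index order then gives $(V^i)_0^*\,\beta_{ij} = \partial_j H$, the first displayed equation. Next I would substitute into \eqref{eq:fast-slow-y1}: the left side $Dy_0^*[g_0(x,y_0^*)]$ has $i$-component $(V^j)_0^*\,\partial_j (V^i)_0^*$ since $g_0^j = V^j$; the term $D_y f_0[y_1^*]$ has $i$-component $-g^{ij}\beta_{jk}(V^k)_1^*$ because $\partial_{V^k} f_0^i = -g^{ij}\beta_{jk}$; and $f_1^i(x,y_0^*) = -g^{ij}\Gamma_{jk\ell}(V^k)_0^*(V^\ell)_0^*$. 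Collecting these yields the second displayed equation verbatim.

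For the ``in particular'' claim, note that on the slow manifold the slow variable evolves by $\dot R^i = \epsilon\,g_0^i(x,y_\epsilon^*) = \epsilon\,(V^i)_0^* + O(\epsilon^2)$, so the leading slow drift is governed by $V_0^*$. The first displayed equation says precisely $\iota_{V_0^*}\beta = \mathbf{d}H$, i.e. $V_0^* = X$, the original Hamiltonian vector field, which already recovers the original dynamics on $M$. To put this in the stated form $\dot R = -\mathbb{J}\,\nabla H$, I would invoke the compatibility relation $g(V,W) = \beta(V,\mathbb{J}W)$ from Lemma \ref{compatible_lemma}, i.e. $g_{j\ell} = \beta_{jk}\mathbb{J}^k_\ell$ in coordinates: then $\beta_{jk}(-\mathbb{J}\,\nabla H)^k = -\beta_{jk}\mathbb{J}^k_\ell g^{\ell m}\partial_m H = -g_{j\ell}g^{\ell m}\partial_m H = -\partial_j H$, so $V_0^* = -\mathbb{J}\,\nabla H$ solves the first displayed equation, and uniqueness of the solution identifies the leading slow drift as the generalized $\bm{E}\times\bm{B}$ drift $-\mathbb{J}\,\nabla H$.

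I do not anticipate a genuine obstacle here; the only points requiring care are bookkeeping ones — tracking the antisymmetry of $\beta$ when reordering indices, and invoking the $g$--$\beta$--$\mathbb{J}$ compatibility identity in the final step — together with the observation that the $O(\epsilon)$ prefactor in $\dot R = \epsilon\,V$ means the displayed leading-order equation is really an equation for the dynamics in the slow time $\epsilon t$.
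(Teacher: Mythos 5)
Your proof is correct and is exactly the route the paper intends: the proposition is stated without an explicit proof, being the direct specialization of Proposition \ref{prop:slow-manifold-series} (equations \eqref{eq:fast-slow-y0}--\eqref{eq:fast-slow-y1}) to the coefficients $f_0^i$, $f_1^i$, $g_0^i$ of the symplectic Lorentz fast-slow system \eqref{eq:symplectic_lorentz_r}--\eqref{eq:symplectic_lorentz_v}, which is precisely what you carry out. Your handling of the final claim also matches the paper's conventions: using $g_{j\ell}=\beta_{jk}\mathbb{J}^k{}_{\ell}$ from Lemma \ref{compatible_lemma} to identify the unique solution of $(V^i)_0^*\beta_{ij}=\partial_j H$ as $V_0^*=-\mathbb{J}\nabla H$, i.e.\ the Hamiltonian vector field of $H$ with respect to $\iota_X\beta=\mathbf{d}H$, with the leading drift $\dot R=-\mathbb{J}\nabla H$ understood in the slow time because of the $O(\epsilon)$ prefactor in $\dot R=\epsilon V$.
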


Finally, we will demonstrate that the slow manifold for the symplectic Lorentz system enjoys long-term normal stability. Again, letting
$\mu_\epsilon = \mu_0 + \epsilon\,\mu_1 + \epsilon^2\,\mu_2 + \dots$ denote the adiabatic invariant series, we have $\mu_0 = \iota_{R_0}\langle \vartheta_0\rangle$, where $R_0$ is given by \eqref{eq:symplectic_lorentz_limiting_rotorate}, $\vartheta_0 = \pi^\ast\alpha$, and the angle brackets denote averaging over the $U(1)$-action $\Phi_\theta$ that is generated by $R_0$. $R_0$ only has a component along $V$, $\Phi_\theta$ leaves the $R$-position fixed. Then, since $\vartheta_0$ depends only on $R$, we have that $\langle \vartheta_0\rangle = \vartheta_0$, and consequently $\mu_0=0$. Given that $\mu_0=0$ the next candidate becomes $\mu_1 = \iota_{R_0}\langle \vartheta_1\rangle$, where we have $\vartheta_1 = g_R(V,dR)$. Then taking
the average of the pullback $\Phi_\theta^\ast (g_R(V,dR))$ with respect to $\theta$ provides 
$\langle\vartheta_1\rangle=g(-\mathbb{J}\nabla H,dR)$ which, again, is independent of $V$, therefore providing 
$\mu_1=0$. We are finally left to compute $\mu_2 = \frac{1}{2}\langle 
\mathbf{d}\vartheta_0(\mathcal{L}_{R_0}I_0\widetilde{\mathcal{X}}_1,I_0\widetilde{\mathcal{X}}_1)\rangle$, 
where $\mathcal{X}_1=V^i\partial_{R^i}-g^{ij}\Gamma_{jkl}V^kV^\ell\partial_{V^i}$ is the first-order term in 
$\mathcal{X}_\epsilon$, i.e., $\widetilde{\mathcal{X}}_1 =\mathcal{X}_1 - \langle \mathcal{X}_1\rangle 
$, and $I_0 = \mathcal{L}_{\omega_0\,R_0}^{-1}$. Since $\vartheta_0$ has only $R$ component which depends only 
on $R$, we only need the $R$ components of the vector field 
$\widetilde{\mathcal{X}}_1^\theta=\Phi_\theta^\ast\widetilde{\mathcal{X}}_1$ which is given by 
\begin{align}
    (\widetilde{\mathcal{X}}_1^\theta)^{R}=-\mathbb{J}\nabla H+(\cos\theta\, \mathbb{I}+\sin\theta\, \mathbb{J})(V+\mathbb{J}\nabla H).
\end{align}
From this we infer
\begin{align}
    (I_0\widetilde{\mathcal{X}}_1^\theta)^{R}&=(\sin\theta\, \mathbb{I}-\cos\theta\, \mathbb{J})(V+\mathbb{J}\nabla H),\\
    (\mathcal{L}_{\omega_0R_0}I_0\widetilde{\mathcal{X}}_1^\theta)^{R}&=(\cos\theta\, \mathbb{I}+\sin\theta\, \mathbb{J})(V+\mathbb{J}\nabla H),
\end{align}
and the second-order adiabatic invariant is therefore
\begin{align}
    \mu_2&=-\frac{1}{2}\frac{1}{2\pi}\int_0^{2\pi}\beta((\cos\theta\, \mathbb{I}+\sin\theta\, \mathbb{J})(V+\mathbb{J}\nabla H),(\sin\theta\, \mathbb{I}-\cos\theta\, \mathbb{J})(V+\mathbb{J}\nabla H))d\theta\nonumber\\
    &=\frac{1}{2}\beta(V+\mathbb{J}\nabla H,\mathbb{J}(V+\mathbb{J}\nabla H))\nonumber\\
    &=\frac{1}{2}g(V+\mathbb{J}\nabla H,V+\mathbb{J}\nabla H)
\end{align}
The Hessian of $\mu$ along $V=-\mathbb{J}\nabla H$ is the metric $g$ and thus sign-definite. Via Theorem \ref{free_action_thm}, the slow-manifold is then stable.

\section{Discussion}

In this Article, we established a free-action stability principle for a large and interesting class of elliptic slow manifolds, namely those that arise in Hamiltonian nearly-periodic systems. We applied this general theory to establish continuous-time normal stability of the slow manifold embedding of guiding center dynamics introduced by Xiao and Qin in \cite{Xiao_2020}. Moreover, we extended the Xiao-Qin embedding and it stability to the Lorentz covariant relativistic setting. Finally, we introduced a general method for embedding any Hamiltonian system on a symplectic manifold as a normally-stable elliptic slow manifold in a larger system with a regular Lagrangian.

Our normal stability results are based on exploiting adiabatic invariants. This idea is not new. MacKay highlights the method in his review article. However, it is not clear in general when a given slow manifold should satisfy a free-action principle. It is therefore striking that a large and interesting class of slow manifolds satisfies the free-action principle ``automatically."

One might attempt to establish a free-action principle for any nearly-periodic Hamiltonian system, in particular for nearly-periodic systems on symplectic, presymplectic, Poisson, or even Dirac manifolds. However, our results are not so general. Instead, we have assumed that phase space is equipped with a closed $2$-form that is non-degenerate  except in the limit of infinite timescale separation. Such singular symplectic structures arise frequently in applications, especially in plasma physics, where disparate timescales abound. The problem of extending our results to more general Hamiltonian structure deserves further attention.

Our method of embedding any symplectic Hamiltonian system as a slow manifold in a regular Lagrangian system suggests interesting further developments. At first glance it suggests that the method proposed by Xiao and Qin for developing structure-preserving integrators for guiding center dynamics may be extended to any Hamiltonian system. However, in his thesis \cite{Ellison_thesis} and subsequent work \cite{Ellison_2018}, Ellison showed unwittingly that integrators derived in this manner will generally suffer from parasitic instabilities. Thus, the continuous-time normal stability established in Section \ref{symplectic_lorentz_sec} may be broken after discretizing time. In future work, we plan to investigate strategies for discretizing the symplectic Lorentz system that do not destroy normal stability of the slow manifold, and that preserve structural properties of the underlying continuous-time dynamics.

\section*{Acknowledgments}
{The work of J.W.B. was supported by} the Los Alamos National Laboratory
LDRD program under project number 20180756PRD4. Work of E.H. was supported by the Academy of Finland grant no. 315278. Any subjective views or opinions expressed herein do not necessarily represent the views of the Academy of Finland or Aalto University.

\bibliographystyle{unsrt}
\bibliography{cumulative_bib_file.bib}

\end{document}